\def\R{\mathbb{R}}
\def\Gbar{ {\overline{G}} }
\def\Abar{ {\overline{A}} }
\def\bbar{ {\overline{b}} }
\def\LGbar{ {\mathcal{L}_{\overline{G}}}  }
\newcommand{\innerp}[1]{\langle{#1}\rangle}
\newcommand{\floor}[1]{\lfloor{#1}\rfloor}
\newcommand{\argmin}[1]{\underset{#1}{\operatorname{arg}\operatorname{min}}\;}
\definecolor{dgreen}{RGB}{0,153,76}
\newtheorem{theorem}{Theorem}
\newtheorem{definition}[theorem]{Definition}
\newtheorem{lemma}[theorem]{Lemma}
\newtheorem{remark}[theorem]{Remark}
\newenvironment{proof}[1][Proof]{\noindent\textbf{#1.} }{\ \rule{0.5em}{0.5em}}
\numberwithin{equation}{section}
\numberwithin{theorem}{section}
\newtheorem{assumption}{Assumption}
\renewcommand{\theassumption}{H\arabic{assumption}} %自定义Assumption格式
\newcommand\keywords[1]{\textbf{Keywords}: #1}
\begin{document}

%%% ============== Title  =======
\title{\textbf{Learning L\'evy density via adaptive RKHS regression with bi-level optimization
}}
\author{
Luxuan Yang\footnotemark[2]\;, 
Fei Lu\footnotemark[1]\;, 
Ting Gao\footnotemark[3]\;, Wei Wei\footnotemark[4] \;and Jinqiao Duan\footnotemark[5]
}
\footnotetext[2]{School of Mathematics and Statistics \& Center for Mathematical Sciences, Huazhong University of Science and Technology, Wuhan 430074, China. Email: \texttt{luxuan\_yang@hust.edu.cn}}
\footnotetext[1]{Department of Mathematics, Johns Hopkins University, Baltimore, USA. Email: 
\texttt{feilu@math.jhu.edu}}
\footnotetext[3]{School of Mathematics and Statistics \& Center for Mathematical Sciences, Huazhong University of Science and Technology, Wuhan 430074, China. Email:
\texttt{tgao0716@hust.edu.cn}}
\footnotetext[4]{College of Mathematics and Statistics, Chongqing University, Chongqing 401331, China. Email: \texttt{weiw@cqu.edu.cn}}
\footnotetext[5]{Department of Mathematics and Department of Physics, Great Bay University, Dongguan, Guangdong 52300, China. Email: \texttt{duan@gbu.edu.cn}}
\footnotetext[1]{is the corresponding author}
\date{}
\maketitle

\begin{abstract}

We propose a nonparametric method to learn the L\'evy density from probability density data governed by a nonlocal Fokker–Planck equation. We recast the problem as identifying the kernel in a nonlocal integral operator from discrete data, which leads to an ill-posed inverse problem. To regularize it, we construct an adaptive reproducing kernel Hilbert space (RKHS) whose kernel is built directly from the data. Under standard source and spectral decay conditions, we show that the reconstruction error decays in the mesh size at a near optimal rate. Importantly, we develop a generalized singular value decomposition (GSVD)-based bilevel optimization algorithm to choose the regularization parameter, leading to efficient and robust computation of the regularized estimator. Numerical experiments for several L\'evy densities, drift fields and data types (PDE-based densities and sample ensemble-based KDE reconstructions) demonstrate that our bilevel RKHS method outperforms classical L-curve and generalized cross-validation strategies and that the adaptive RKHS norm is more accurate and robust than $L^2_\rho$- and $\ell^2$-based regularization.

\vspace{2em}
\keywords{Inverse problem; L\'evy measure; bi-level optimization; RKHS; nonlocal operator}
\end{abstract}
\setcounter{tocdepth}{1}
\tableofcontents 

\section{Introduction}
\label{Introduction}
We consider the problem of estimating the  L\'evy density $\phi$ of a L\'evy motion $L_t$ appearing in the one-dimensional SDE
\begin{equation}
\label{eq:SDE}
d X_t=b\left(X_{t}\right) d t+ \sigma\left(X_{t}\right) d B_t+d L_t 
\end{equation}
from data consisting of the probability density $p(x,t)$ of $X_t$ on a discrete space-time mesh or from reconstructed densities based on observed sequences of sample ensembles. 
Here, the drift $b: \mathbb{R}^1 \rightarrow \mathbb{R}^1$ and the diffusion $\sigma: \mathbb{R}^1 \rightarrow \mathbb{R}^1$ are known functions, $L_t$ is a scalar and symmetric L\'evy motion with generating triplet $(0,0, \nu)$ independent of a scalar Brownian motion $B_t$ and its L\'evy jump measure $\nu$ admits a symmetric density $\phi$ with respect to Lebesgue measure, i.e. $d\nu(x) = \phi(x)\,dx$.

Identifying $\phi$ is practically important because it specifies the L\'evy jump measure $\nu$ and thereby characterizes the jump properties of the L\'evy process $L_t$, with applications to modeling volatility jumps in financial markets \cite{yu2011mcmc, kindermann2008identification, applebaum2009levy} and analyzing changes in climate systems \cite{zheng2020maximum, li2022extracting}.

A broad literature estimates L\'evy measures from trajectory data of the L\'evy process rather than from densities. In high-frequency (in time) settings, Comte and Genon-Catalot \cite{comte2009nonparametric} estimate $g(y)=y\phi(y)$ via frequency-domain and projection methods based on the characteristic function of increments, under asymptotics $\Delta t\to 0$ and $N\Delta t\to\infty$. However, recovering $\phi(y)=g(y)/y$ then becomes unstable near $y=0$.  
%  which is not required in our framework. Moreover, $\phi$ is obtained through $\phi(y)=g(y) / y$, a reparametrization that is inherently unstable near $y= 0$. 
In low-frequency regimes, Neumann and Reiß \cite{neumann2009} estimate L\'evy–Khintchine characteristics from characteristic functions and their derivatives, but show that the diffusion coefficient cannot be uniformly consistently recovered without extra assumptions, which obstructs a unique reconstruction of $\phi$. Pathwise approaches based on the L\'evy–Itô decomposition, such as \cite{duval2021spectral}, extract jump information from thresholded high-frequency increments and then estimate a compound Poisson approximation; these methods require carefully tuned thresholds and high sampling rates to control contamination by small jumps. More recently, several works focus on estimating increment densities rather than $\phi$ itself, e.g., \cite{duval2024adaptive,duval2025nonparametric}.

 To address cases of density data without trajectory information, an alternative approach uses the Fokker–Planck equation and nonlocal Kramers–Moyal formulas. Li and Duan \cite{li2021data} fit parametric L\'evy families using small-time transition probabilities and conditional moments without explicitly solving \eqref{eq:FPE}, while Lu et al. \cite{lu2022extracting} approximate transition densities via normalizing flows and then infer a stability index within a prespecified L\'evy family. These approaches exploit the nonlocal generator but remain confined to parametric estimation. Parallel to this, there is a growing literature on learning nonlocal operators from data: many works treat the operator as a black box to be approximated by neural networks \cite{pang2020npinns,you2022nonlocal}, while others learn parametric kernels in nonlocal models (e.g., Bernstein polynomials with $\ell^2$ regularization \cite{you2021data,fan2023bayesian}). To improve flexibility, Lu et al. \cite{lu2022data} proposed a data-adaptive RKHS Tikhonov regularization framework for learning the kernel. However, the impact of mesh-dependent discretization errors and the principled selection of regularization parameters remain to be further explored.

In this work, we learn $\phi$ by exploiting the Fokker–Planck equation for $p(x,t)$:
\begin{equation}
    \label{eq:FPE}
    \partial_t p =-\partial_x\left(b p \right)+\frac{1}{2} \partial_{x x}(\sigma^2 p) +\int_{\mathbb{R}^1 \backslash\{0\}}\left[p(x+y, t)-p(x, t)\right] \phi(|y|) d y,
\end{equation}
and regard $\phi$ as an unknown kernel in a nonlocal operator to be inferred from data $\mathcal{D}=\{p_{t_i}(x_j)\}_{i,j=1}^{N,J}$ on a uniform spatial grid $\{x_j\}\subset\Omega_{R_0}$ and observation times $\{t_i\}$. To make the nonlocal integral numerically tractable, we assume that $\phi$ is supported on $(0,R_0]$ (a standard modeling choice; see, e.g., \cite{zhang2013risk, comte2009nonparametric}) and restrict $x$ to $\Omega_{R_0}:=\{z\in\Omega:\,[z-R_0,z+R_0]\subset\Omega\}$ so that $x\pm y \in \Omega$ for all $|y|\le R_0$. In this setting, recovering $\phi$ from discrete data and the numerical errors in estimating the derivatives of $p$ leads to an ill-posed inverse problem. This makes regularization indispensable and motivates a mesh-dependent error analysis.

Based on the above formulation, we develop a nonparametric regression framework to learn $\phi$ from data using the data-adaptive RKHS Tikhonov regularization of \cite{lu2022data} with a principled bi-level optimization scheme for selecting the regularization hyperparameter. Specifically, from data, we construct an adaptive reproducing kernel $\Gbar$ whose associated integral operator is precisely the Hessian of the quadratic loss for $\phi$. This leads to an RKHS $H_{\Gbar}$ that (i) is tailored to the analytic structure of the nonlocal operator, (ii) automatically restricts estimation to the subspace of identifiability, and (iii) provides a natural regularization norm.

Notably, we derive a mesh–dependent error bound for the RKHS‑regularized estimator and show that, with an optimally chosen regularization parameter, the reconstruction error decays like $(\Delta x)^{\frac{2\beta \varsigma}{2\beta \varsigma+4\varsigma+1}}$ before saturation, where $\beta$ is the smooth exponent in a source condition and $\varsigma$ is the power exponent of spectral decay. This extends the optimal order of Tikhonov regularization (e.g., \cite[Sec.~3.2]{engl1996regularization}) to a setting where the normal operator and the data are both discretized and the regularization uses an adaptive RKHS norm. When numerical quadrature errors are negligible, our rates become comparable (up to minor exponent differences due to the different effective dimension scalings) to the minimax optimal rates for inverse statistical learning in \cite{blanchard2018optimal} and the statistical inverse problem with DA–RKHS regularization in \cite{zhang2025minimax}, while explicitly quantifying how spatial discretization degrades the reconstruction accuracy.

Importantly, we develop a generalized singular value decomposition (GSVD)-based bi-level optimization scheme to select the regularization hyperparameter from data. In our bi-level optimization, the lower level solves a Tikhonov-regularized least-squares problem, for which GSVD yields both the closed-form solution and the inverse-Hessian needed to compute hypergradients in the upper-level update. Thus, while it is inspired by recent bi-level optimization approaches \cite{nelsen2025bilevel,chada2022consistency}, our \emph{bilevel-RKHS} approach avoids repeated computation inverse-Hessian and gives a stable, efficient hyperparameter selection procedure. 

Numerically, we show that the \emph{bilevel-RKHS} outperforms classical L-curve \cite{hansen1992analysis} and generalized cross-validation \cite{golub1979generalized} strategies and that the adaptive RKHS norm is superior to $L^2_\rho$- and $\ell^2$-based regularization across multiple L\'evy densities, drift fields, and data types (PDE-based density snapshots and sample ensemble-based KDE reconstructions).

Our contributions can be summarized as follows:
\begin{itemize}
    \item We formulate the recovery of L\'evy density $\phi$ from the nonlocal Fokker–Planck operator as an ill-posed inverse problem, and construct an automatic adaptive RKHS $H_{\Gbar}$ that encodes identifiability and provides a principled regularization norm.
    \item We establish mesh-dependent convergence rates for the RKHS-regularized estimator under suitable source and spectral decay conditions, explicitly capturing the effect of spatial discretization ($\Delta x$) on the reconstruction error.
    \item We design an efficient GSVD-based bi-level RKHS algorithm for selecting the regularization hyperparameter with closed-form hypergradients, and demonstrate its numerical advantages over L-curve and GCV, as well as over alternative norms.
\end{itemize}

Beyond the specific task of L\'evy density estimation, our framework contributes to the broader program of learning nonlocal operators and ill-posed inverse problems from data. It combines structure-exploiting operator learning, adaptive function spaces, and bilevel hyperparameter optimization and can be extended in several directions: to higher-dimensional L\'evy processes and to joint nonparametric recovery of drift, diffusion, and jump components. We view this work as a step toward data-adaptive learning of stochastic dynamics with jumps, and anticipate that the GSVD-based bilevel selection of hyperparameters will be useful in a range of nonlocal operator learning problems. 

Our paper is organized as follows. In Section \ref{Inverseproblem}, we formulate the inverse problem, define the adaptive RKHS and its discrete approximation. Section \ref{sec3} establishes convergence rates for the RKHS-regularized estimator. We present in Section \ref{secbilevel} the bi-level optimization algorithm, and report numerical experiments for different L\'evy densities and data-generation scenarios in Section \ref{sec4}. Section \ref{sec:conclusion} concludes and discusses possible extensions.

\paragraph{Preliminaries and notations.} Here, we briefly introduce the definitions and notation used throughout the paper.

\noindent\textbf{L\'evy Process: }
Let  $X=(X(t), t \geq 0)$  be a stochastic process defined on a probability space  $(\Omega, \mathcal{F}, P)$. We say that  $X$ is a \emph{L\'evy process} if:
(i)  $X(0)=0$  (a.s);
(ii)  $X$  has independent and stationary increments;
(iii)  $X$  is stochastically continuous, i.e. for all  $a>0$  and all  $s \geq 0$ , $\lim _{t \rightarrow s} P(|X(t)-X(s)|>a)=0$.

\noindent\textbf{L\'evy Measure: }
Let $v$ be a Borel measure defined on $\mathbb{R}^d \backslash\{0\}$. We say that it is a \emph{L\'evy measure} if $\int_{\mathbb{R}^d-\{0\}}\left(|y|^2 \wedge 1\right) \nu(d y)<\infty.$ If the L\'evy measure $\nu$ is absolutely continuous with respect to Lebesgue measure, the Radon-Nikodym derivative $\phi=d \nu / d x$ is called the \emph{L\'evy density}. Then, when L\'evy Process is symmetric, L\'evy Measure $\nu$ is symmetric so that  L\'evy density is .

\noindent\textbf{Symmetric L\'evy process: } A L\'evy process $X$ is called \emph{symmetric} if $L_t = -L_t$ for all $t\ge 0$.
In this case, the L\'evy measure is symmetric in the sense that $\nu(A)=\nu(-A)$ for all Borel sets
$A\subset \mathbb{R}\setminus\{0\}$. Consequently, If the L\'evy measure $\nu$ is absolutely continuous, the L\'evy density satisfies
$\phi(y)=\phi(-y)$ for a.e.\ $y\neq 0$.

\noindent The whole paper follows the notation in Table \ref{tab:notaion}.
\begin{table}[H]
    \centering
    \caption{Table of notations}
    \begin{tabular}{cc}
        \toprule
       $\mathbf{\Abar}$ and $\mathbf{\Abar}^M$	& normal matrices of continuum and discrete data  \\
        \hline
        $\mathbf{\bbar}$ and $\mathbf{\bbar}^M$  & normal vectors of continuum and discrete data \\
        \hline
        $\bar{\mathbf{G}}$, $\mathbf{Q}$, $\mathbf{U}$, $\mathbf{X}$, $\mathbf{K}$, $\mathbf{M}$ & matrix or array by bold capital letters  \\
        \hline
        $\mathbf{f}$, $\mathbf{c}$ & vector by bold lowercase letters \\
        \hline
        $\gamma$, $\lambda$  & scalar by Greek letters\\
        \bottomrule
    \end{tabular}
    \vspace{0cm}
    \label{tab:notaion}
\end{table}

%%%%%%%%%%%%%%%%%%%%%%%%+==============
%%%%%%%%%%%%%%%%%%%%%%%%+==============
%%%%%%%%%%%%%%%%%%%%%%%%+==============
\section{Adaptive kernel regression and regularization
}
\label{Inverseproblem}
Assuming continuum-in-space data, we first formulate the estimation of the L\'evy density as a variational inverse problem. We then show that the problem is ill-posed and propose an adaptive RKHS framework for regression and regularization. Throughout, we assume that the L\'evy density $\phi$ is symmetric and has bounded support in $(-R_0, R_0)$, a standard assumption (see, e.g., \cite{zhang2013risk}) and we focus on the computationally practical case $R_0 < \infty$. 

%%%%%%%%%%%%%%%%%%%%%%%%+==============
%%%%%%%%%%%%%%%%%%%%%%%%+==============
\subsection{Regression estimator} 
We recover the L\'evy density by solving an inverse problem of learning the convolution kernel in the integral operator of the Fokker-Planck equation. Specifically, with $p_t$ denoting the probability density of $X_t$, we define $ R_\phi[p_t]$ the a nonlocal operator parametrized by the L\'evy density $\phi$: 
\begin{equation}
\label{eq:nonop}
\begin{aligned}
R_{\phi}[p_t](x):=&\int_{\Omega} \phi(|y|)[p_t(y+x)-p_t(x)] d y =  \int_{(0,R_0]} \phi(r)Q[p_t](x, r) d r, 
\end{aligned}
\end{equation}
for $x\in \Omega_{R_0}: = \{z\in \Omega: [z-R_0,z+R_0]\in \Omega \}$, and the functional $Q[p_t]:\Omega_{R_0}\times (0,R_0]\to \R$ is 
\begin{equation}
\label{eq:Q[p]}
Q[p_t](x, r) = p_t(x + r) + p_t(x - r)- 2p_t(x), r\in (0,R_0], x\in \Omega_{R_0}. 
\end{equation}
Note that the evaluation of $R_\phi[p_t](x)$ with $x\in \Omega_{R_0}$ uses the values of $p_t$ in $\Omega$.

The task is to estimate the function $\phi$ of the nonlocal operator $R_\phi$ in the following reformulation of the Fokker-Planck equation 
\begin{equation}
   \label{eq:fdata} 
 R_\phi[p_t](x) = f_t(x):= \frac{\partial p_t(x)}{\partial t}+ \frac{\partial}{\partial x}\left[b(x) p_t(x)\right]- \frac{1}{2} \frac{\partial^2 \left[\sigma(x)^2p_t(x)\right]}{\partial x^2}.  
\end{equation}

 We assume first that we have continuum data of function pairs:
\begin{equation}
\label{eq:densitydata}
\mathcal{T}=\left\{\left(p_{t_i}(x)_{x\in \Omega}, f_{t_i}(x) _ {x\in \Omega_{R_0} }\right)  \right\}_{i=1}^N.  % =\left\{\left(p\left(x, t_i\right), f\left(x,t_i\right)\right)\right\}_{i=1}^N,
\end{equation} 

Consider the minimizer of the loss function of mean square error:
\begin{equation}
\label{eq:meanloss}
    \widehat{\phi}_{H_n}=\underset{\phi\in H_n}{\arg \min } \, \mathcal{E}_\infty (\phi),  \quad \text{where} \quad \mathcal{E}_\infty (\phi)=\frac{1}{N} \sum_{i=1}^N \int_{\Omega_{R_0}}\left|R_{\phi}\left[p_{t_i}\right](x)-f_{t_i}(x)\right|^2 dx.
\end{equation}
Here, the hypothesis space 
$H_n= \operatorname{span}\left\{{\phi}_k\right\}_{k=1}^n	
$ with basis functions $\left\{{\phi}_k\right\}$ will be selected in the next sections using an adaptive RKHS. For each $\phi=\sum_{k=1}^n c_k {\phi}_k \in H_n$, noticing that $R_{\phi}=\sum_{k=1}^n c_k R_{{\phi}_k}$, we can write the loss function as :
\begin{equation}
\label{eq: loss}
\begin{aligned}
\mathcal{E}_\infty \left(\phi\right)
  & =\frac{1}{N} \sum_{i=1}^N \int_{\Omega_{R_0}} \left|\sum_{k=1}^n c_k R_{{\phi}_k}\left[p_{t_i}\right](x)-f_{t_i}(x)\right|^2 dx\\
  & = \mathbf{c}^\top  \mathbf{\Abar}  \mathbf{c} - 2 \mathbf{c}^\top \mathbf{\bbar} + C_N^f =:  \mathcal{E}(\mathbf{c}), 
\end{aligned}
\end{equation}
where $\mathbf{c}= (c_1,\ldots,c_n)^\top$, the normal matrix and vectors are 
\begin{equation}
\label{eq:Aandb}
\begin{aligned}
 \mathbf{\Abar}(k, k')
    & = \frac{1}{N} \sum_{i=1}^N \int_{\Omega_{R_0}} R_{{\phi}_k}\left[p_{t_i}\right](x)R_{{\phi}_{k'}}\left[p_{t_i}\right](x) dx, \quad 1\leq k,k'\leq n 
\\
\mathbf{\bbar}(k) & =\frac{1}{N}\sum_{i=1}^N \int_{\Omega_{R_0}}R_{{\phi}_k}\left[p_{t_i}\right](x) f_{t_i}(x)  dx , \quad 1\leq k\leq n,  
\end{aligned}
\end{equation}
and $C_N^{f}=\frac{1}{N} \sum_{i=1}^N \int_{\Omega_{R_0}} \left|f_{t_i}(x)\right|^2 dx $ is a constant. Hence, a minimizer for \eqref{eq:meanloss}, which is a least squares estimator(LSE) with a minimal coefficient norm,  is  $\widehat{\phi}_{H_n}=\sum_{k=1}^n \widehat{c}_k {\phi}_k$ with  $\widehat{\mathbf{c}}=\mathbf{\Abar}^{\dag} \mathbf{\bbar}$, where $\mathbf{\Abar}^{\dag}$ is pseudo-inverse of $\mathbf{\Abar}$. 

Regularization is necessary in practice since the normal matrix $\mathbf{\Abar}$ is often ill-conditioned and the normal vector $\mathbf{\bbar}$ is perturbed by sampling error or numerical integration error. In fact, as we will show in the next section, the underlying inverse problem is ill-posed.  

Before picking a proper regularization norm, we must first address a fundamental issue: defining the function space for estimating $\phi$. Subsequently, we define function spaces adaptive to this inverse problem, characterize its ill-posedness, and propose an adaptive RKHS for regularization. 

%%%%%%%%%%%%%%%%%%%%%%%%+==============
%%%%%%%%%%%%%%%%%%%%%%%%+==============
\subsection{An adaptive RKHS for regression and regularization}

Since we have little prior information about the L\'evy density, we define function spaces adaptive to this variational approach through the lens of statistical learning, as in \cite{lu2023nonparametric,lu2022data,lang2022learning}. In particular, we characterize the ill-posedness of this variational inverse problem using the loss function's Hessian and define an adaptive RKHS for regression. Moreover, we assume uniform boundedness of continuum data, which often holds, for example, when the diffusion term is uniformly elliptic so the density is smooth \cite{chen2017heat}.

\begin{assumption}\label{assump:data}  
The continuum data $\{p_{t_i}\}_{i=1}^N$ sastisy that  
 \begin{equation}
 \begin{aligned}
 C_{max}:=  \max_{i} \sup_{x\in \Omega} p_{t_i}(x) <\infty, \quad 
 Z      := \int_0^{R_0} \frac{1}{N} \sum_{i=1}^N \int_{\Omega_{R_0}} \left|Q[p_{t_i}](x,r)\right| d x d r<\infty, 
 \end{aligned}
 \end{equation}
 where $Q[p_t]$ is defined in \eqref{eq:Q[p]}. 
\end{assumption}

Given data $\{p_{t_i}(x)\}_{i=1}^N$ satisfying {\rm Assumption \ref{assump:data}}, let $\rho$ be the measure on $(0,R_0]$ whose density function with respect to the Lebesgue measure is  
\begin{equation} \label{eq:rho}
   \rho(r)= \frac{1}{Z N} \sum_{i=1}^N \int_{\Omega_{R_0}} \left|Q[p_{t_i}](x,r)\right| d x. 
 \end{equation}
%\end{definition}
The measure $\rho$ quantifies the strength that the data explores $\phi$. In particular, outside of its support, the data provides no information about $\phi$. Thus, we call it an \emph{exploration measure}, and set the default function space of estimating $\phi$ to be $L^2_\rho$.

We define a function $\Gbar:[0,R_0]\times [0,R_0]\to \R$ as 
\begin{equation}
\label{eq:barG}
\Gbar(r, s)=\frac{G(r, s)}{\rho(r) \rho(s)}\mathbf{1}_{\{ \rho(r) \rho(s)>0 \} }, \quad G(r, s)=\frac{1}{N} \sum_{i=1}^N \int Q[p_{t_i}](x, r)Q[p_{t_i}](x, s) d x, 
\end{equation}
where $\rho$ is the exploration measure defined in \eqref{eq:rho}. 

The next lemma shows that $\Gbar$ is a positive semi-definite and square-integrable function, so its integral operator $\LGbar$ is compact. In particular, the variational inverse problem of minimizing the quadratic loss function in \eqref{eq:meanloss} is ill-posed since it involves the inversion of the operator $\LGbar$. We postpone its proof to Appendix \ref{append:A}. 
\begin{lemma} \label{lemma:rk}
Given data $\{p_{t_i}\}_{i=1}^N$ satisfying {\rm Assumption \ref{assump:data}}, the function $\Gbar$ in \eqref{eq:barG} is a positive semi-definite function and $\Gbar\in L^2(\rho\otimes \rho)$. Consequently, its integral operator $\LGbar:L^2_\rho\to L^2_\rho$ 
\begin{equation}\label{eq:LGbar}
\LGbar \phi (r)=\int_0^{R_0} \phi(s) \Gbar(r, s) \rho(s) d s, \quad \forall \phi\in L^2_\rho
\end{equation}   
is compact, positive semi-definite. Furthermore, the loss function in \eqref{eq:meanloss} can be written as 
\begin{equation} \label{eq:lossFn_LG}
\begin{aligned}
\mathcal{E}_\infty \left({\phi}\right) 
  & = \innerp{\LGbar \phi, \phi}_{L^2_\rho} - 2\innerp{\phi^D, \phi}_{L^2_\rho} + C, 
\end{aligned}
\end{equation}
and its minimizer in $\mathrm{Null}(\LGbar)^\perp$ is $\widehat \phi = \LGbar^{-1} \phi^D$. Here, the function $\phi^D\in L^2_\rho$ is the Riesz representation of the bounded linear functional  
\begin{equation}
 \label{eq:phiD}
\left\langle \phi^D, \phi\right\rangle_{L^2_{\rho}}=\frac{1}{N} \sum_{i=1}^N \int  R_{\phi}\left[p_{t_i}\right](x) f_{t_i}(x) d x, \forall \phi \in L^2_{\rho} .
\end{equation}
\end{lemma}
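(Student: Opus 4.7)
The plan is to derive all four claims of the lemma from a single Fubini identity that expresses the bilinear form of $G$ as a sum of $L^2(dx)$ norms of $R_\phi[p_{t_i}]$. Positive semi-definiteness will then fall out immediately, the loss-function rewrite will be the same identity applied to the quadratic term of $\mathcal{E}_\infty$, and the Hilbert--Schmidt property will follow from elementary pointwise bounds on $Q[p_{t_i}]$ that use Assumption~\ref{assump:data}.

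First, for any $\phi\in L^2_\rho$ I would expand $R_\phi[p_{t_i}](x)=\int_0^{R_0}\!\phi(r)\,Q[p_{t_i}](x,r)\,dr$ and apply Fubini to get
\[
\frac{1}{N}\sum_{i=1}^N \int_{\Omega_{R_0}}|R_\phi[p_{t_i}](x)|^2\,dx \;=\; \int_0^{R_0}\!\!\int_0^{R_0} G(r,s)\,\phi(r)\phi(s)\,dr\,ds \;\geq\; 0,
\]
which already proves that $G$ is positive semi-definite. On $\{\rho(r)=0\}$, the definition of $\rho$ forces $Q[p_{t_i}](\cdot,r)=0$ a.e.\ for every $i$, so $G(r,\cdot)=0$ there and $G=G\,\mathbf{1}_{\{\rho(r)\rho(s)>0\}}$ a.e. Dividing by $\rho(r)\rho(s)$ on the support then identifies the right-hand side with $\innerp{\LGbar\phi,\phi}_{L^2_\rho}$. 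The same identity, applied to the quadratic part of $\mathcal{E}_\infty$ in \eqref{eq:meanloss}, produces the $\innerp{\LGbar\phi,\phi}_{L^2_\rho}$ term in \eqref{eq:lossFn_LG}.

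Second, to show $\Gbar\in L^2(\rho\otimes\rho)$ I would exploit the pointwise bound $|Q[p_{t_i}](x,r)|\leq 4C_{\max}$ from Assumption~\ref{assump:data}. Pulling this bound out of one $Q$-factor in the definition of $G(r,s)$ gives $|G(r,s)|\leq 4C_{\max}Z\,\rho(s)$, and symmetrically $|G(r,s)|\le 4C_{\max}Z\,\rho(r)$; hence $|G(r,s)|^2\leq (4C_{\max}Z)^2\,\rho(r)\rho(s)$ and
\[
\|\Gbar\|_{L^2(\rho\otimes\rho)}^2 \;=\; \int_0^{R_0}\!\!\int_0^{R_0} \frac{|G(r,s)|^2}{\rho(r)\rho(s)}\,dr\,ds \;\leq\; (4C_{\max}Z R_0)^2<\infty,
\]
so $\LGbar$ is Hilbert--Schmidt, hence compact, with positivity inherited from the first step. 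For the remaining claims I would verify that the linear functional $\phi\mapsto \tfrac{1}{N}\sum_i \int R_\phi[p_{t_i}]\,f_{t_i}\,dx$ is bounded on $L^2_\rho$ by Cauchy--Schwarz in $(x,i)$ combined with the first step, yielding the estimate $\bigl(\tfrac{1}{N}\sum_i \|f_{t_i}\|_{L^2}^2\bigr)^{1/2}\innerp{\LGbar\phi,\phi}_{L^2_\rho}^{1/2}\leq C\|\phi\|_{L^2_\rho}$; Riesz then delivers a unique $\phi^D\in L^2_\rho$ satisfying \eqref{eq:phiD}, and combining with the first step gives \eqref{eq:lossFn_LG}. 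Setting the first variation to zero yields the normal equation $\LGbar\phi=\phi^D$, and since $\LGbar$ is injective on $\mathrm{Null}(\LGbar)^\perp$ the minimizer there is $\widehat{\phi}=\LGbar^{-1}\phi^D$, interpreted via the Moore--Penrose pseudo-inverse when $\phi^D$ lies only in the closure of the range of $\LGbar$.

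The main obstacle I anticipate is the careful treatment of the denominator $\rho(r)\rho(s)$ in the definition of $\Gbar$: I must verify that $G$ vanishes on $\{\rho(r)\rho(s)=0\}$ so that $\Gbar$ is well-defined as an element of $L^2(\rho\otimes\rho)$ and so that the bilinear form is preserved when one passes from $L^2(dr\,ds)$ to $L^2_\rho$. All other ingredients — Fubini, the uniform bound on $Q$, and the standard Hilbert-space minimization calculus — are routine.
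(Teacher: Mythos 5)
Your proof is correct and follows essentially the same approach as the paper: the same Fubini identity expressing $\frac{1}{N}\sum_i \int |R_\phi[p_{t_i}]|^2\,dx$ as the bilinear form of $G$ yields positive semi-definiteness and the quadratic-form rewrite of $\mathcal{E}_\infty$, the same pointwise bound $|Q[p_{t_i}]|\le 4C_{\max}$ combined with the definition of $Z$ and $\rho$ gives the Hilbert--Schmidt estimate, and the same first-variation argument identifies the minimizer as $\LGbar^{-1}\phi^D$ on $\mathrm{Null}(\LGbar)^\perp$. Your explicit check that $G$ vanishes wherever $\rho(r)\rho(s)=0$, so that the division defining $\Gbar$ is harmless, is a careful detail that the paper leaves implicit.
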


Therefore, the inverse problem of minimizing the loss function is ill-posed, and it is crucial to regularize the problem by seeking solutions in $\mathrm{Null}(\LGbar)^\perp$. We introduce the following RKHS for regularization.     

\begin{definition}[Adaptive RKHS] \label{def:auto-kernel}
We call $\Gbar$ in \eqref{eq:barG} the \emph{automatic reproducing kernel} for the inverse problem of learning the density of the L\'evy measure $\phi$ in the operator $R_{\phi}$ in \eqref{eq:nonop} from data $\{p_{t_i}\}_{i=1}^N$ satisfying {\rm Assumption \ref{assump:data}}.  Its associated RKHS, denoted by $H_{\Gbar}$, is called an adaptive RKHS. 
 \end{definition}
 The RKHS $H_{\Gbar}$ is data-adaptive since its reproducing kernel $\Gbar$ adjusts to the continuum data. It is well-known that the RKHS can be expressed as $H_{\Gbar}:=\LGbar^{1/2}(L^2_\rho)$; see e.g., \cite{cucker2007learning,lu2022data,zhang2025minimax}.

 Notably, the closure of $H_{\Gbar}$ coincides with $\mathrm{Null}(\LGbar)^\perp$, within which the loss function admits a unique minimizer, making it an ideal space for regularization. Consequently, the RKHS $H_{\Gbar}$ naturally leads to the derivation of a regularized estimator: 
\begin{equation}
\label{eq:wholeloss}
\widehat{\phi}_\lambda =\argmin{\phi \in H_{\Gbar}}   \mathcal{E}_\lambda(\phi), \quad 
\mathcal{E}_\lambda(\phi): =\mathcal{E}_\infty(\phi) +\lambda \|\phi\|_{H_{\Gbar}}^2. 
\end{equation}
Here, $\lambda$ is a hyperparameter controlling the strength of regularization.

In particular, we can also use $\Gbar$ kernel regression. That is, setting the basis function of the hypothesis $H_n= \mathrm{span}\{\phi_k\}_{k=1}^n$ to be 
\begin{equation}\label{eq:Gbar_basisFn}
\phi_k(r)=  \Gbar(r_k,r) , \,1\leq k\leq n,  
\end{equation}
where $\{r_k\}_{k=1}^n\subset (0,R_0]$ are mesh points, we have, by the reproducing property, for any  $\phi (r) =\sum_{k=1}^n c_k \Gbar(r_k,r) \in H_n$, 
\begin{equation}
\label{regularkernel}
\begin{aligned}
\|{\phi}\|_{H_{\Gbar}}^2
 & =\sum_{k=1}^n \sum_{m=1}^n c_{k} c_{m} \left\langle \Gbar(r_{k},r), \Gbar(r_{m},r) \right\rangle_{H_{\Gbar}} =\sum_{k=1}^n \sum_{m=1}^n  c_{k} c_{m}\Gbar(r_{k},r_{m})=\mathbf{c}^{\top}\bar{\mathbf{G}}\mathbf{c},
\end{aligned}
\end{equation}
where $\bar{\mathbf{G}} = \begin{bmatrix}
\Gbar(r_1,r_1) & \cdots  &  \Gbar(r_1,r_n)\\
  \vdots & \ddots  & \vdots\\
  \Gbar(r_n,r_1) & \cdots  &  \Gbar(r_n,r_n)
\end{bmatrix}$, similar to \cite{li2025automatic}. Then, the loss \eqref{eq: loss} can be written as: 
\begin{equation}
\label{eq:regularkernelloss}
\begin{aligned}
\mathcal{E}_\lambda(\phi) &=\mathcal{E}_\infty(\phi)+\lambda\|{\phi}\|_{H_{\Gbar}}^2 = \mathbf{c}^{\top}\left( \mathbf{\Abar} + \lambda \bar{\mathbf{G}} \right)\mathbf{c}-2 \mathbf{c}^{\top} \mathbf{\bbar}  + C_N^f, 
\end{aligned}
\end{equation}
where $\mathbf{\Abar}$ and $\mathbf{\bbar}$ have the same form as in \eqref{eq:Aandb} with $\phi_k=\bar{G}\left(r_k, \cdot\right)$ and $\phi_{k^{\prime}}=\bar{G}\left(r_{k^{\prime}}, \cdot\right)$.

%%%%%%%%%%%%%%%% 
\subsection{Data-based approximation of the regression and regularization} 
\label{sec31}

Given discrete data $\left\{p_{t_i}\left(x_j\right), j=1, \ldots, M\right\}_{i=1}^N$, we approximate $\mathbf{\Abar}$, $\mathbf{\bbar}$ and $\bar{\mathbf{G}}$ in \eqref{eq:regularkernelloss} to compute the estimator. We first write the data as input–output pairs of the nonlocal operator: 
\begin{equation}
\label{eq: discretedata}
\mathcal{D}=\left\{\left(p_{t_i}\left(x_j\right), \tilde{f}_{t_i}\left(x_j\right)\right), j=1, \ldots, M\right\}_{i=1}^N,
\end{equation}
where the value of $\tilde{f}_{t_i}(x_j)$ is computed via finite difference. That is, for the interior grid points $x_j \in (-L+R_0, L-R_0)$, 
\begin{equation}
\label{eq: discretef}
\begin{gathered}
\tilde{f}_{t_i}\left(x_j\right) \doteq \frac{p_{t_{i+1}}\left(x_j\right)-p_{t_{i}}\left(x_j\right)}{\Delta t}+\frac{b\left(x_{j+1}\right) p_{t_{i}}\left(x_{j+1}\right)-b\left(x_{j-1}\right) p_{t_{i}}\left(x_{j-1}\right)}{2 \Delta x} \\
-\frac{1}{2} \frac{p_{t_{i}}\left(x_{j+1}\right)-2 p_{t_{i}}\left(x_{j}\right)+p_{t_{i}}\left(x_{j-1}\right)}{\Delta x^2}.
\end{gathered}
\end{equation}
For the left/right boundary points, i.e., at $x_j=-L+R_0$ or $x_j=L-R_0$, we approximate the advection term using a three-point forward/backward difference and the diffusion term using a four-point forward/backward difference. Both one-sided schemes for boundary points exhibit $O\left(\Delta x^2\right)$ numerical error. 

We use Riemann sum to approximate the integrals in the reproducing kernel $\Gbar$ in \eqref{eq:barG} and in $R_{\Gbar(r_k,\cdot)}[p_{t_i}](x_j)$. The process starts from piecewise constant approximations of the functions $Q\left[p_{t_i}\right]\left(x_j, r\right)$:  
\begin{equation}
\label{eq: discreteQ}   
\widehat{Q}\left[p_{t_i}\right]\left(x_j, r\right)=\sum_{k=1}^n Q\left[p_{t_i}\right]\left(x_j, r_k\right) \mathbf{1}_{I_k}(r),
\end{equation}
for $x_j\in \Omega_{R_0}=\left[-L+R_0, L-R_0\right]$. Here, $\{r_k\}$ is a partition of $(0,R_0]$ and are set to be $r_k=k \Delta x, k=$ $1,2, \ldots, n$ with $n=\floor{R_0/\Delta x}$, and $\mathbf{1}_{I_k}(r)$ is the the indicator function  for the interval $I_k=((k-1) \Delta x, k \Delta x]$. Accordingly, the discrete counterpart of the exploration measure $\rho$ in \eqref{eq:rho} and the reproducing kernel $G(r, s)$ in \eqref{eq:barG}, denonted by $\widehat{\rho}$ and $G^M$, are given by
\begin{equation}
\label{eq: discreteG}
\begin{aligned}
\widehat{\rho}(r) &= \frac{1}{Z N} \sum_{i, j, k}\left|Q\left[p_{t_i}\right]\left(x_j, r_k\right)\right| \Delta x \mathbf{1}_{I_k}(r),\\
G^M(r, s) & =\frac{\Delta x}{N} \sum_{i=1}^N \sum_{m, \ell, \ell^{'}} Q\left[p_{t_i}\right]\left(x_m, r_{\ell}\right) Q\left[p_{t_i}\right]\left(x_m, s_{\ell^{'}}\right) \mathbf{1}_{I_{\ell}}\left(r\right) \mathbf{1}_{I_{\ell^{'}}}\left(s\right), 
\end{aligned}
\end{equation}
and the basis function $\phi_k(r)= \Gbar(r_k,r)$ is 
\begin{equation}
\label{eq:disbasis}   
\phi_k^M(r)=\Gbar^M\left(r_k, r\right)=\sum_{\ell=1}^n \Gbar^M\left(r_k, r_{\ell}\right) \mathbf{1}_{I_{\ell}}(r), \;\text{with}\; \Gbar^M(r, s)  =\frac{G^M(r, s)}{\widehat{\rho}(r) \widehat{\rho}(s)}.
\end{equation}

 With the notation $\Phi^M(r)=\left(\phi_1^M(r), \ldots, \phi_n^M(r)\right)^{\top}$, the estimator $\hat{\phi}_\lambda^{n, M}$ in the finite-dimensional space $H_{\Gbar^M}= \operatorname{span}\left\{\phi_k^M\right\}_{k=1}^n $ admits the representation 
 \begin{equation}
\label{eq:defphiMma}  
\hat\phi_\lambda^{n,M}(r)
= \bigl(\hat{\mathbf{c}}_\lambda^{n,M}\bigr)^\top \Phi^M(r),\qquad
\hat{\mathbf{c}}_\lambda^{n,M}
= \bigl(\mathbf{\Abar}^M + \lambda \mathbf{\Gbar}^M\bigr)^{\dagger}\mathbf{\bbar} ^M.  
\end{equation} 
 Here, $\mathbf{\Abar}^M$ and $\mathbf{\bbar} ^M$ approximate $\mathbf{\Abar}$ and $\mathbf{\bbar}$ in \eqref{eq:regularkernelloss} and are given by 
\begin{equation}
\label{eq:matrix}
\begin{aligned}
 \mathbf{\Abar}^M =\mathbf{\Gbar}^M\mathbf{G}^M\mathbf{\Gbar}^M (\Delta x)^2, \quad \mathbf{\bbar} ^M = \mathbf{\Gbar}^M\mathbf{Q}^\top \mathbf{f}\Delta x, 
\end{aligned}
\end{equation}
where $\mathbf{G}^M$ %= \mathbf{Q}^\top \mathbf{Q}$, 
$\mathbf{\Gbar}^M$, $\mathbf{Q}$ and $\mathbf{f}$ are given by  
\begin{equation}
\label{eq:matrixexpression}
\begin{aligned}
\mathbf{Q}_{(i, j), k} & =
\sqrt{\frac{\Delta x}{N}}Q\left[p_{t_i}\right]\left(x_j, r_k\right) \in \mathbb{R}^{N M \times n}, \quad
&\mathbf{f}_{(i, j)} & =\sqrt{\frac{\Delta x}{N}} \tilde{f}_{t_i}\left(x_j\right) \in \mathbb{R}^{N M},\\
\mathbf{G}^M(k, \ell) & =\sum_{i=1}^N \sum_{j=1}^M  \mathbf{Q}_{(i, j), k} \mathbf{Q}_{(i, j), \ell}
& \mathbf{\Gbar}^M(k, \ell) &= \frac{\mathbf{G}^M(k, \ell)}{\widehat{\rho}(r_k)\widehat{\rho}(r_\ell)} \in \mathbb{R}^{n \times n}.
\end{aligned}
\end{equation}

In particular, to improve the numerical stability,  instead of using the normal matrix $\overline{\mathbf{A}}^M$, we use the original regression matrix to solve for $\hat{\mathbf{c}}_\lambda^{n, M}$. Specifically, we solve $\hat{\mathbf{c}}_\lambda^{n, M}$ by minimizing 
\begin{equation}
\label{eq:discreteloss}
\begin{aligned}
 \mathcal{E}_\lambda(\mathbf{c})&:=\| \mathbf{Q}\mathbf{\Gbar}^M\mathbf{c} (\Delta r) - \mathbf{f}\|^2+ \lambda\| \mathbf{c}^\top  \sqrt{\mathbf{\Gbar}^M}\|^2, 
\end{aligned}
\end{equation}
which approximates the loss function in \eqref{eq:regularkernelloss}.

%%%%%%%%%%%%%%%%%%%%
%%%%%%%%%%%%%%%% 
\section{Convergence of the RKHS-regularized estimator}
\label{sec3}
%%%%%%%%%%%%%%%%%%%%
\subsection{Main results}
Under suitable smoothness conditions on the true kernel and the data, we show that the regularized RKHS estimator converges as the uniform mesh grid becomes finer. 

To ensure that the numerical errors in \eqref{eq: discretedata} are controlled by $\Delta t$ and $\Delta x$, we strengthen Assumption \ref{assump:data} and impose higher uniform regularity on $p_{t_i}$ and $\left.\partial_t p_t\right|_{t=t_i}$.

{
\renewcommand{\theassumption}{H1$^\prime$}
\begin{assumption}\label{H:boundf}
For each $i$, the probability density $p_{t_i}$ is supported on $\Omega_{R_0}\subset \R$ such that
$$
p_{t_i}\in W^{3,\infty}(\Omega_{R_0}), \qquad \left.\partial_t p_t\right|_{t=t_i}\in W^{1,\infty}(\Omega_{R_0}),
$$
with the corresponding $L^\infty(\Omega_{R_0})$–based Sobolev norms bounded uniformly in $i$.  
\end{assumption}
}
\addtocounter{assumption}{-1}

Since $Q\left[p_t\right](x, 0)=0$, we have $\rho(0)=0$. In other words, the data carries no information about $\phi(0)$, so $\phi(0)$ is not identifiable. Thus, we restrict our analysis to $r \in\left[\delta, R_0\right]$, where $\rho(r) \geq \rho_0>0$, i.e., the exploration measure is uniformly positive over the domain of interest. To further simplify the analysis, we assume that $\rho$ is a positive constant over $[\delta, R_0]$.

\begin{assumption}{\label{H:constant}}  $\rho(r) \equiv \rho_0>0$ for $\forall r \in [\delta, R_0]$ with $\delta >0$.  
\end{assumption}
This assumption enables the next lemma that bridges the discrete matrix formulation of the estimator in \eqref{eq:matrix} with a continuous operator formulation, and provides a basis for the convergence analysis in the next section. Its proof is postponed to Appendix \ref{append:A}.

\begin{lemma}[Operator form $\Leftrightarrow$ Matrix form]\label{lem:opma} 
Under Assumption {\rm \ref{H:constant}}, let $\mathcal{L}_{\Gbar^M}:L_\rho^2\to L_\rho^2$ be the integral operator with kernel $\Gbar^M$ in \eqref{eq: discreteG} and $H_{\Gbar^M} := \operatorname{span}\{\phi_k^M\}_{k=1}^n$ with $\{\phi_k^M\}$ in \eqref{eq:disbasis}. Then, the matrices $\mathbf{\Abar}^M, \mathbf{\Gbar}^M$ and vector $\mathbf{\bbar} ^M$ in \eqref{eq:matrix} satisfy 
$$
\mathbf{\Abar}^M(k, k')
= \langle \mathcal{L}_{\Gbar^M}\phi_k^M,\phi_{k'}^M\rangle_{L_\rho^2},\qquad
\mathbf{\Gbar}^M(k, k')
= \langle \mathcal{L}_{\Gbar^M}^{-1}\phi_k^M,\phi_{k'}^M\rangle_{L_\rho^2},\qquad
\mathbf{\bbar} ^M(k)
= \langle \phi^{D,M},\phi_k^M\rangle_{L_\rho^2}.
$$
Furthermore, the matrix form estimator in \eqref{eq:defphiMma} has the operator form 
\begin{equation}
\label{eq:defphiMop}    
\hat\phi_\lambda^{n,M} = (\mathcal{L}_{\Gbar^M} + \lambda \mathcal{L}_{\Gbar^M}^{-1})^{-1}\phi^{D,M} =  (\mathcal{L}_{\Gbar^M}^2 + \lambda I)^{-1}\mathcal{L}_{\Gbar^M}\phi^{D,M}, 
\end{equation}
where $\phi^{D,M}$ is the Riesz representation of the bounded linear functional:  
\begin{equation}
\label{eq:RieszM}
\begin{aligned} 
\left\langle \phi^{D,M},\phi \right\rangle_{L_\rho^2}
& = \frac{1}{N}\sum_{i=1}^N\sum_{m=1}^M 
  R_\phi^M[p_{t_i}](x_m)\tilde f_{t_i}(x_m)\Delta x, \, \\
R_\phi^M\left[p_{t_i}\right](x):&=\sum_{\ell=1}^n \int_0^{R_0} Q\left[p_{t_i}\right](x, r_{\ell})\mathbf{1}_{I_{\ell}}(r) \phi(r) d r.
\end{aligned}
\end{equation}
\end{lemma}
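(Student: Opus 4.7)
The strategy is to verify every identity by reducing both sides to the same mesh-indexed bookkeeping. Two structural observations drive the reduction. First, by the construction in \eqref{eq: discreteG}--\eqref{eq:disbasis}, $\Gbar^M$ and each $\phi_k^M=\Gbar^M(r_k,\cdot)$ are piecewise constant on $\{I_\ell\}_{\ell=1}^n$, so any $L^2_\rho$ pairing of such functions collapses to a sum of their mesh values weighted by $\rho_0\Delta x$ thanks to Assumption \ref{H:constant} (with its discrete counterpart $\widehat\rho(r_\ell)\equiv\rho_0$ on the support mesh). Second, this constant-$\rho$ assumption gives the entrywise identity $\mathbf{G}^M=\rho_0^2\mathbf{\Gbar}^M$, so that $\rho_0$ factors out cleanly everywhere. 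Under the coefficient parametrization $\phi=\sum_k c_k\phi_k^M\in H_{\Gbar^M}$, the mesh values are $\phi(r_\ell)=(\mathbf{\Gbar}^M\mathbf{c})_\ell$, and the coefficient-to-function map is a bijection onto $H_{\Gbar^M}$.

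The identity for $\mathbf{\bbar}^M$ follows by substituting $\phi=\phi_k^M$ in the Riesz formula \eqref{eq:RieszM}: piecewise constancy reduces $R_{\phi_k^M}^M[p_{t_i}](x_m)$ to $\Delta x\sum_\ell Q[p_{t_i}](x_m,r_\ell)\,\Gbar^M(r_k,r_\ell)$, and reassembling through the matrices in \eqref{eq:matrixexpression} yields $\innerp{\phi^{D,M},\phi_k^M}_{L^2_\rho}=(\mathbf{\Gbar}^M\mathbf{Q}^\top\mathbf{f}\Delta x)_k=\mathbf{\bbar}^M(k)$. The identity for $\mathbf{\Gbar}^M$ uses the standard isometry $\innerp{f,g}_{H_{\Gbar^M}}=\innerp{\mathcal{L}_{\Gbar^M}^{-1}f,g}_{L^2_\rho}$ on $\mathrm{range}(\mathcal{L}_{\Gbar^M})$, together with the reproducing property $\innerp{\phi_k^M,\phi_{k'}^M}_{H_{\Gbar^M}}=\Gbar^M(r_k,r_{k'})=\mathbf{\Gbar}^M(k,k')$. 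The identity for $\mathbf{\Abar}^M$ requires one explicit Riemann calculation: $\mathcal{L}_{\Gbar^M}\phi_k^M$ has mesh values $\rho_0\Delta x\,((\mathbf{\Gbar}^M)^2)_{k,\ell}$, and a second $L^2_\rho$ pairing with $\phi_{k'}^M$ adds another factor of $\rho_0\Delta x$, producing $\rho_0^2(\Delta x)^2((\mathbf{\Gbar}^M)^3)_{k,k'}$; substituting $\mathbf{G}^M=\rho_0^2\mathbf{\Gbar}^M$ matches $\mathbf{\Abar}^M=(\Delta x)^2\mathbf{\Gbar}^M\mathbf{G}^M\mathbf{\Gbar}^M$. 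A byproduct is that, on the $n$-dimensional subspace $H_{\Gbar^M}$, $\mathcal{L}_{\Gbar^M}$ acts on coefficient vectors as multiplication by $\rho_0\Delta x\,\mathbf{\Gbar}^M$, hence $\mathcal{L}_{\Gbar^M}^2$ as $(\rho_0\Delta x)^2(\mathbf{\Gbar}^M)^2$.

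For the operator form \eqref{eq:defphiMop}, I factor the matrix normal equation $(\mathbf{\Abar}^M+\lambda\mathbf{\Gbar}^M)\hat{\mathbf{c}}=\mathbf{\bbar}^M$ as $\mathbf{\Gbar}^M[(\rho_0\Delta x)^2(\mathbf{\Gbar}^M)^2+\lambda I]\hat{\mathbf{c}}=\mathbf{\bbar}^M$. Symmetry of $\Gbar^M$ gives $\mathcal{L}_{\Gbar^M}\phi^{D,M}(r_k)=\innerp{\phi_k^M,\phi^{D,M}}_{L^2_\rho}=\mathbf{\bbar}^M(k)$, so $\mathcal{L}_{\Gbar^M}\phi^{D,M}\in H_{\Gbar^M}$ has coefficient vector $(\mathbf{\Gbar}^M)^\dagger\mathbf{\bbar}^M$. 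Reading off coefficients in the operator equation $(\mathcal{L}_{\Gbar^M}^2+\lambda I)\hat\phi=\mathcal{L}_{\Gbar^M}\phi^{D,M}$ via the matrix actions established above reproduces exactly the factored matrix equation, so $\hat\phi_\lambda^{n,M}=(\mathcal{L}_{\Gbar^M}^2+\lambda I)^{-1}\mathcal{L}_{\Gbar^M}\phi^{D,M}$; the algebraic identity $(\mathcal{L}_{\Gbar^M}^2+\lambda I)^{-1}\mathcal{L}_{\Gbar^M}=(\mathcal{L}_{\Gbar^M}+\lambda\mathcal{L}_{\Gbar^M}^{-1})^{-1}$ on $\mathrm{range}(\mathcal{L}_{\Gbar^M})$ gives the alternative representation.

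The main technical subtlety is the generic case when $\mathbf{\Gbar}^M$ is singular, so the pseudo-inverses must be interpreted consistently on both the matrix and operator sides. This is resolved by restricting throughout to $\mathrm{range}(\mathcal{L}_{\Gbar^M})=\mathrm{Null}(\mathcal{L}_{\Gbar^M})^\perp$, on which the coefficient-to-function map $\mathbf{c}\mapsto\sum_k c_k\phi_k^M$ is an isomorphism and all Moore-Penrose inverses of $\mathbf{\Gbar}^M$ are well defined. Once this identification is fixed, the remaining bookkeeping of the $\rho_0\Delta x$ factors and of the piecewise-constant Riemann identities is routine.
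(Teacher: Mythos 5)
Your proof is correct, and the three matrix-entry identities are established essentially as the paper does (the $\mathbf{\Gbar}^M$ identity via the reproducing property, $\mathbf{\bbar}^M$ and $\mathbf{\Abar}^M$ by unwinding piecewise-constant Riemann sums). Where you diverge is in the operator$\Leftrightarrow$matrix equivalence: the paper expands $\hat\phi_\lambda^{n,M}$ in the basis $\{\phi_k^M\}$, tests the operator equation against each $\phi_{k'}^M$ to get the normal equations, and conversely extends the weak identity from $H_{\Gbar^M}$ to all of $L^2_\rho$ using $\mathcal{L}_{\Gbar^M}(L^2_\rho)\subset H_{\Gbar^M}$ and self-adjointness; you instead establish that $\mathcal{L}_{\Gbar^M}$ acts on $H_{\Gbar^M}$-coefficients as $\rho_0\Delta x\,\mathbf{\Gbar}^M$, factor the matrix system as $\mathbf{\Gbar}^M\bigl[(\rho_0\Delta x)^2(\mathbf{\Gbar}^M)^2+\lambda I\bigr]\hat{\mathbf{c}}=\mathbf{\bbar}^M$, and read the operator equation off the factored form directly. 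Your route is more computational and makes the $\rho_0\Delta x$ scaling and the role of $\mathbf{G}^M=\rho_0^2\mathbf{\Gbar}^M$ completely explicit, at the cost of carrying these constants through; the paper's pairing-and-extension argument is shorter and avoids ever introducing the scaling factor, but is a bit less transparent about why the two discretizations line up. One small presentation point: both arguments implicitly identify the discrete exploration density $\widehat\rho(r_\ell)$ with $\rho_0$ (the paper uses $\rho$, not $\widehat\rho$, in the definition of $\mathcal{L}_{\Gbar^M}$), so it is good that you flag this; you should make precise that this identification is being made on the mesh, since Assumption H2 is stated for the continuum $\rho$ and $\widehat\rho$ is a Riemann-sum approximation of it. Also, when you say the coefficient map $\mathbf{c}\mapsto\sum_k c_k\phi_k^M$ is an isomorphism on $\mathrm{range}(\mathcal{L}_{\Gbar^M})=\mathrm{Null}(\mathcal{L}_{\Gbar^M})^\perp$, what you actually want is the restriction to $\mathrm{range}(\mathbf{\Gbar}^M)\subset\mathbb{R}^n$, which then maps bijectively onto $H_{\Gbar^M}$; the wording conflates the finite-dimensional coefficient space with the function space, though the intended meaning is clear.
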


Moreover, we impose a source condition on the true L\'evy density $\phi^*$ relative to the normal operator $\mathcal{L}_{\Gbar}$, with $\beta$ quantifying its smoothness and thereby governing the convergence rates. 
\begin{assumption}[Source condition]
\label{H:source_condi}
$\phi^*=\mathcal{L}_{\Gbar}^{\beta / 2} w$ for some $\beta \geq 0, w \in L_\rho^2$ with $\|w\|_{L_\rho^2} \leq R$. 
\end{assumption}

We assume that the eigenvalues of the normal operator $\LGbar$ decays polynomially. This assumption is supported by the regularity conditions on $p_{t_i}$ in Assumption \ref{H:boundf} and Theorem 1.1 in \cite{volkov2024optimal}, which together imply that $\LGbar$ has polynomial spectral decay.  
\begin{assumption}[Spectral decay]
\label{H:spectral_decay}
There exists $a\geq 0$ and $b>0$ such that the eigenvalues $\lambda_k$ of $\LGbar$ in \eqref{eq:LGbar} satisfy 
\[
ak^{-2\varsigma} \leq \lambda_k \leq b k^{-2\varsigma}, \forall k.  
\] 
\end{assumption}
Here, the polynomial spectral decay exponent $\varsigma$ quantifies the ill-posedness of the inverse problem, and a larger $\varsigma$ indicates a more severe ill-posedness.

We quantify the complexity of the inverse problem at scale~$\lambda$ through a regularized effective dimension. In analogy with classical kernel regression (see, e.g., \cite{caponnetto2007optimal,zhang2005learning}), but adapted here to the normal operator $\mathcal{L}_{\bar{G}}$ and the DA-RKHS regularization, we define
\begin{equation}\label{eq:defNeff}
\mathcal{N}(\lambda)
:= \operatorname{Tr}\bigl(\mathcal{L}_{\bar{G}}(\mathcal{L}_{\bar{G}}+\lambda \mathcal{L}_{\bar{G}}^{\dagger})^{-1}\bigr)
= \operatorname{Tr}\bigl(\mathcal{L}_{\bar{G}}^2(\mathcal{L}_{\bar{G}}^2+\lambda I)^{-1}\bigr).
\end{equation}
Under Assumption~\ref{H:spectral_decay}, $\mathcal{N}(\lambda)$ grows polynomially as $\lambda \downarrow 0$ with a rate $O(\lambda^{-\frac{1}{4\varsigma}})$  (see Lemma \ref{lemma:effdim_Bd}). This quantity will play a central role in the convergence analysis of our RKHS-regularized estimator, stated in Theorem~\ref{thm:conv}, by governing the trade-off between approximation and regularization errors as the mesh is refined. 

Our main result is the following convergence theorem for the DA-RKHS regularized estimator. The proof is given in Section \ref{sec:error_decomposition}.   
\begin{theorem}\label{thm:conv}
Under Assumptions \ref{H:boundf}, \ref{H:constant} and \ref{H:source_condi} and with $\mathcal{N}(\lambda)$ in \eqref{eq:defNeff}, for $\varsigma > 1/4$ and $0<\lambda \leq 1$,
the estimator in \eqref{eq:defphiMma} has the error bound
$$
\begin{aligned}
\left\|\hat{\phi}_\lambda^{n, M}-\phi^*\right\|_{L_\rho^2} 
\leq & C\left(\beta, C_1\right) R \lambda^{\min \{\beta / 4,1\}} + C_{a}\bigl(\Delta t+\Delta x+\Delta x^2\bigr) \left( \sqrt{\frac{\mathcal{N}(\lambda)}{\lambda}} + \sqrt{\frac{\Delta x}{\lambda^2}}\right) \\
 + & C_{b}\,\frac{\Delta x}{\lambda}
+ C_{d}\,\frac{\Delta x^2}{\lambda}\sqrt{\frac{\mathcal{N}(\lambda)}{\lambda}}
+ C_e \frac{\Delta x}{{\lambda}^{1/2}} +C_f \frac{\Delta x ^2}{\lambda^{3/2}}+ C_{g}\,\frac{\Delta x^{5/2}}{\lambda^2}
\end{aligned}
$$
with constants $ C\left(\beta, C_1\right), C_{a}, C_{b}, C_{c}, C_{d}, C_{e},C_{f},C_g$ independent of $\lambda, \Delta x, \Delta t$. 

Consequently, with $\Delta t \leq \Delta x$ and Assumption \ref{H:spectral_decay}, taking the optimal regularization  hyperparameter $\lambda ^* \asymp(\Delta x)^\alpha$ with $\alpha=\frac{1}{\min \{\beta / 4,1\}+1}$, 
we have, as $\Delta x\to 0$,  
$$
\left\|\hat{\phi}_\lambda^{n, M}-\phi^*\right\|_{L^2_{\rho}} \lesssim(\Delta x)^{\alpha \min \{\beta / 4,1\}}, \quad  \alpha \min \{\beta / 4,1\}= \begin{cases}\frac{\beta}{ \beta +4}, & 0<\beta \leq 4;  \\ \frac{1}{2}, & \beta>4. \end{cases}
$$ 
\end{theorem}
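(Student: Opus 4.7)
The plan is a bias--variance decomposition adapted to the data-adaptive RKHS setting. I will introduce the population-level Tikhonov estimator
\[
\hat\phi_\lambda \;:=\; (\LGbar^2+\lambda I)^{-1}\LGbar\,\phi^D,
\]
which is the continuum analogue of \eqref{eq:defphiMop}, and insert it via the triangle inequality to obtain
\[
\|\hat\phi_\lambda^{n,M}-\phi^*\|_{L^2_\rho} \;\leq\; \|\hat\phi_\lambda-\phi^*\|_{L^2_\rho} + \|\hat\phi_\lambda^{n,M}-\hat\phi_\lambda\|_{L^2_\rho}.
\]
The first summand captures the regularization bias and the second the combined perturbation from finite differences, Riemann quadrature, and operator discretization.

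The bias term is handled by spectral calculus. Since $\LGbar\phi^*=\phi^D$ on $\mathrm{Null}(\LGbar)^\perp$ by Lemma \ref{lemma:rk}, one has $\hat\phi_\lambda-\phi^*=-\lambda(\LGbar^2+\lambda I)^{-1}\phi^*$, and the source condition $\phi^*=\LGbar^{\beta/2}w$ combined with the elementary spectral bound $\sup_{t\geq 0} \lambda t^{\beta/2}/(t^2+\lambda)\leq C(\beta,\|\LGbar\|)\lambda^{\min\{\beta/4,1\}}$ yields the $R\lambda^{\min\{\beta/4,1\}}$ contribution. The saturation at $\beta=4$ is characteristic of Tikhonov regularization against the \emph{squared} operator $\LGbar^2$, and it sets the ceiling on the final convergence rate.

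For the perturbation term I will further decompose
\[
\hat\phi_\lambda^{n,M}-\hat\phi_\lambda \;=\; T_\lambda^M(\phi^{D,M}-\phi^D) + (T_\lambda^M-T_\lambda)\phi^D,
\]
where $T_\lambda^M:=(\mathcal{L}_{\Gbar^M}^2+\lambda I)^{-1}\mathcal{L}_{\Gbar^M}$ and $T_\lambda:=(\LGbar^2+\lambda I)^{-1}\LGbar$, and then apply the second resolvent identity to express $T_\lambda^M-T_\lambda$ through the operator perturbation $\mathcal{L}_{\Gbar^M}-\LGbar$. Three independent sources feed this decomposition: (i) the finite-difference error in $\tilde f_{t_i}$, of order $O(\Delta t+\Delta x^2)$ by Taylor expansion under Assumption \ref{H:boundf}; (ii) the $O(\Delta x)$ Riemann-sum error in replacing $R_\phi$ by $R_\phi^M$ inside $\phi^{D,M}$; and (iii) the kernel perturbation $\Gbar^M-\Gbar$, obtained by propagating the Riemann error in both $G^M-G$ and the normalizing factor $\hat\rho-\rho$, using Assumption \ref{H:constant} to keep $1/\hat\rho$ bounded away from zero. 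Combining these with the resolvent estimates $\|(\LGbar^2+\lambda I)^{-1}\|\leq 1/\lambda$ and $\|T_\lambda\|\leq 1/(2\sqrt\lambda)$, and the Hilbert--Schmidt identity $\|\LGbar(\LGbar^2+\lambda I)^{-1/2}\|_{\mathrm{HS}}^2=\mathcal{N}(\lambda)$, should reproduce the stated combination of $\sqrt{\mathcal{N}(\lambda)/\lambda}$, $\Delta x/\lambda$, $\Delta x^2/\lambda^{3/2}$, and $\Delta x^{5/2}/\lambda^2$ factors, each arising from a distinct cross-term in the expansion.

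The main obstacle is the coupled nature of the kernel perturbation: $\Gbar^M$ enters both through $\mathcal{L}_{\Gbar^M}$ and through the hypothesis basis $\{\phi_k^M\}$ in \eqref{eq:defphiMma}, and because of the normalization $\Gbar^M=G^M/(\hat\rho\otimes\hat\rho)$ a clean black-box operator-perturbation argument is unavailable. I expect to treat $G^M$ and $\hat\rho$ separately via a first-order expansion of the quotient, and to rely on Lemma \ref{lem:opma} throughout to switch between the matrix formulation, convenient for explicit Riemann-sum estimates, and the operator formulation, convenient for spectral calculus. To conclude, under $\Delta t \leq \Delta x$ and Assumption \ref{H:spectral_decay}, Lemma \ref{lemma:effdim_Bd} gives $\mathcal{N}(\lambda)\lesssim\lambda^{-1/(4\varsigma)}$, so the condition $\varsigma>1/4$ makes the $\sqrt{\mathcal{N}(\lambda)/\lambda}$ term dominated by $\Delta x/\lambda$; balancing the remaining leading contributions $\lambda^{\min\{\beta/4,1\}}$ and $\Delta x/\lambda$ via $\lambda^*\asymp(\Delta x)^\alpha$ with $\alpha=1/(\min\{\beta/4,1\}+1)$ yields the asserted rate $(\Delta x)^{\alpha\min\{\beta/4,1\}}$, with the piecewise formula reflecting the Tikhonov saturation identified in the bias analysis.
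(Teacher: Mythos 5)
Your proposal is correct and follows essentially the same strategy as the paper: a triangle-inequality decomposition into a regularization bias (handled by spectral calculus and the source condition, saturating at $\beta=4$ because Tikhonov is applied to the squared operator) and a discretization perturbation (handled by a resolvent identity and effective-dimension bounds). The key lemmas you invoke—the identity $\|\mathcal{L}_{\bar G}(\mathcal{L}_{\bar G}^2+\lambda I)^{-1/2}\|_{\mathrm{HS}}^2=\mathcal N(\lambda)$, the resolvent estimate $\|(\mathcal L_{\bar G}^2+\lambda I)^{-1}\|\le 1/\lambda$, the Lipschitz-type bound $\|\mathcal L_{\bar G^M}-\mathcal L_{\bar G}\|\lesssim\Delta x$, and the rate $\mathcal N(\lambda)\asymp\lambda^{-1/(4\varsigma)}$—are exactly Lemmas~\ref{Lem: HSoperator}, \ref{lem:effdim}, \ref{lem:diff-effdim}, and \ref{lemma:effdim_Bd} in the paper.

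The one organizational difference is that the paper inserts an intermediate semi-discrete estimator $\hat\phi_\lambda^n$ (where only the $r$-variable is discretized via $\bar G^n$, not the $x$-integrals), giving a three-way split into regularization, approximation, and numerical errors (Lemmas~\ref{lem: regerror}, \ref{lem:approxerror}, \ref{lem:numerroroperator}). You collapse the last two into a single perturbation $\hat\phi_\lambda^{n,M}-\hat\phi_\lambda$. Both are equivalent by the triangle inequality, since $\|\mathcal L_{\bar G^M}-\mathcal L_{\bar G}\|\le\|\mathcal L_{\bar G^M}-\mathcal L_{\bar G^n}\|+\|\mathcal L_{\bar G^n}-\mathcal L_{\bar G}\|$ and similarly for $\phi^{D,M}-\phi^D$. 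What the paper's split buys is a cleaner separation of error sources and a slightly sharper $\Delta x/\sqrt\lambda$ intermediate term; what your merge would cost is that after the resolvent identity you would need to reuse the re-grouping $\mathcal A_M^{-1}\mathcal L_{\bar G^M}(\mathcal L_{\bar G}-\mathcal L_{\bar G^M})\phi^*+\mathcal A_M^{-1}(\mathcal L_{\bar G^M}^2-\mathcal L_{\bar G}^2)(\phi^*-\phi_\lambda^\infty)$ (rather than acting on $\phi^D$ directly) to avoid paying a factor $1/\lambda$ where $1/(2\sqrt\lambda)$ suffices—a point you gesture at but do not fully spell out. Since the dominant term in the final balance is $\Delta x/\lambda$ either way, your conclusion and the optimal $\lambda^*\asymp(\Delta x)^{1/(\min\{\beta/4,1\}+1)}$ both come out correctly.
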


The error comes from three sources: the regularization error (the term $O(\lambda^{\min \{\beta / 4,1\}})$), the numerical error and the finite-dimensional RKHS approximation error (the rest terms); see Section \ref{sec:error_decomposition}.  Here, the numerical error arises from the discretization of the integrals in the normal matrix $\mathbf{\Abar}$ and normal vector $\mathbf{\bbar}$ in \eqref{eq:regularkernelloss} via $\mathbf{\Abar}^M$ and $\mathbf{\bbar} ^M$. In particular, the dominating term $O(\Delta x/\lambda)$ comes from the perturbation in $\mathcal{L}_{\bar{G}^M}$ due to the Riemann-sum discretization in $x$ with mesh $\Delta x$, which is amplified by the regularized inverse $(\mathcal{L}_{\bar{G}^M}^2+\lambda I)^{-1}$ with norm bounded by $1/\lambda$. One may reduce this numerical error by using higher-order quadrature rules for the integrals in $\mathbf{\Abar}$ and $\mathbf{\bbar}$, which would lead to higher-order convergence in $\Delta x$.
The finite-dimensional RKHS approximation error stems from approximating the infinite-dimensional DA-RKHS $H_{\Gbar}$ by its finite-dimensional subspace $H_{\Gbar^n}$, which introduces additional errors of order  
$O(\Delta x\,\sqrt{\frac{\mathcal{N}(\lambda)}{\lambda}} + 
 \frac{\Delta x^{3/2}}{\lambda}\;+ \frac{\Delta x}{\lambda^{1/2}})$. Note that the approximation error and the numerical error are both controlled by the mesh size $\Delta x$ since the mesh $\Delta r =\Delta x$.

 The regularization error decreases as $\lambda$ decreases, while the numerical and approximation errors increase as $\lambda$ decreases, which motivates balancing the bias $O\left(\lambda^{\min \{\beta / 4,1\}}\right)$ against the dominant term $O\left(\Delta x /\lambda\right)$ from the numerical error. This balance yields the optimal regularization parameter $\lambda^* \asymp (\Delta x)^{\frac{1}{\min \{\beta / 4,1\}+1}}$ and the convergence rates stated in Theorem \ref{thm:conv}.  Notably, when the true kernel is sufficiently smooth with $\beta>4$, the convergence rate saturates at $O\left((\Delta x)^{1/2}\right)$, reflecting the limited ability of the RKHS to regularize overly smooth kernels in this inverse problem setting. 

\begin{remark}[Relation to minimax rates]
\label{rem:minimax}
When the numerical error is negligible when using high-order quadrature, the error bound in Theorem \ref{thm:conv} would lead to a convergence rate close to the minimax optimal rate for the statistical inverse problems of learning kernels in operators with DA-RKHS regularization in {\rm\cite{zhang2025minimax}} when these rates are adapted to our deterministic setting. Specifically, ignoring the numerical error terms in Theorem \ref{thm:conv}, we have
$$
\left\|\hat{\phi}_\lambda^{n, M}-\phi^*\right\|_{L^2_{\rho}} 
\lesssim (\Delta x)^{\alpha \min \{\beta / 4,1\}}, 
\quad  
\alpha \min \{\beta / 4,1\}
= \begin{cases}
    \frac{2 \beta \varsigma}{2 \beta \varsigma+4 \varsigma+1}, & 0<\beta \leq 4;  \\ 
       \frac{8 \varsigma}{12 \varsigma+1}, & \beta>4 .
    \end{cases}
$$ 
These rates come from the trade-off between the regularization bias $O\left(\lambda^{\min \{\beta / 4,1\}}\right)$ and the variance-like term $O\left(\Delta x \sqrt{\frac{\mathcal{N}(\lambda)}{\lambda}}\right) = O\left(\Delta x  \lambda^{-(1+4\varsigma) /(8\varsigma)} \right) $, which leads to $\lambda^*= O\left((\Delta x)^{\frac{8 \varsigma}{\min \{\beta/4, 1\} 8 \varsigma+1+4 \varsigma}}\right)$. When $\beta\leq 4$, the above rate is close to the minimax rate for $ O\left((\frac{1}{\sqrt{M}})^{-\frac{2 \beta \varsigma}{2 \beta \varsigma+2 \varsigma+1} } \right) $ with $M$ being the samples size in {\rm\cite{zhang2025minimax}}, where the exponent differs slightly due to the different nature of the variance-like term in our deterministic setting compared to the statistical setting there. 
\end{remark}

\begin{remark}[Regularity of $p_{t_i}$ and uniform bounds for $f_{t_i}$]\label{rem:regularity}
Under Assumption~\ref{H:boundf}, for each $t_i>0$ the map $x \mapsto p_{t_i}(x)$ is Lipschitz on $\Omega_{R_0}$ with a constant independent of $i$. In particular, letting $C'_{\max}:=\max_{1\le i\le N}\|\partial_x p_{t_i}\|_{L^\infty(\Omega_{R_0})},$ we have
$$
|p_{t_i}(x+u)-p_{t_i}(x)| \le C'_{\max}\,|u|\quad\text{for all }x,u\text{ with }x,x+u\in\Omega_{R_0},
$$
since $p_{t_i}\in W^{1,\infty}(\Omega_{R_0})$. Moreover, for $f_t(x)$ defined in \eqref{eq:fdata}, the product rule and Assumption~\ref{H:boundf} imply that there exist constants $C_0,C_1<\infty$, independent of $i$, such that
$$
\|f_{t_i}\|_{L^\infty(\Omega_{R_0})}\le C_0,\qquad
\|\partial_x f_{t_i}\|_{L^\infty(\Omega_{R_0})}\le C_1.
$$
These constants depend only on $\|b\|_{W^{2,\infty}}$, $\|\sigma^2\|_{W^{3,\infty}}$, and the uniform $W^{3,\infty}$ in $x$ and $W^{1,\infty}$ in $t$ bounds of the densities, but not on the index $i$.
\end{remark}

\subsection{Error decomposition and proof of the main theorem}\label{sec:error_decomposition}
We decompose the estimator's error into three parts: regularization error, approximation error due to a finite-dimensional hypothesis space, and numerical error arising from the use of discrete data to approximate integrals. That is, 
$$
\left\|\hat{\phi}_\lambda^{n, M}(r)-\phi^*(r)\right\|_{L_\rho^2} \leq \underbrace{\left\|\hat{\phi}_\lambda^{n, M}(r)-\hat{\phi}_\lambda^n(r)\right\|_{L_\rho^2}}_{\text {numerical error }}+\underbrace{\left\|\hat{\phi}_\lambda^n(r)-{\phi}_\lambda^{\infty}(r)\right\|_{L_\rho^2}}_{\text {approximation error }}+\underbrace{\left\|{\phi}_\lambda^{\infty}(r)-\phi^*(r)\right\|_{L_\rho^2}}_{\text {regularization error }}, 
$$
where the three estimators are defined as 
\begin{equation}
\label{eq:defphi}
\begin{aligned}
    {\phi}_\lambda^{\infty} :=&\left(\mathcal{L}_{\Gbar}^2+\lambda I\right)^{-1} \mathcal{L}_{\Gbar}^2\phi_*, \\
 	   \hat{\phi}_\lambda^n :=&\left(\mathcal{L}_{\Gbar^n}^2+\lambda I\right)^{-1} \mathcal{L}_{\Gbar^n} \phi^{D,n}, \\
 	   \hat{\phi}_\lambda^{n,M} =& (\mathcal{L}_{\Gbar^M}^2 + \lambda I)^{-1}\mathcal{L}_{\Gbar^M}\phi^{D,M}. 
\end{aligned}
 \end{equation}
Here, the operator $\mathcal{L}_{\Gbar^n}$ is the integral operator whose reproducing kernel $ \Gbar^n$ is a piecewise constant approximation of $\Gbar$,  
  \begin{equation}\label{eq:Gbar^n}
    \Gbar^n(r, s)=\sum_{\ell^{'}, \ell=1}^n \Gbar \left(r_{\ell^{'}}, r_{\ell}\right) \mathbf{1}_{I_{\ell^{'}}}(r) \mathbf{1}_{I_{\ell}}(s), \quad I_l:=((l-1) \Delta r, l \Delta r],  
   \end{equation}
 and the function $\phi^{D, n}$ is the Riesz representation of the bounded linear functional  
\begin{equation}
\label{eq:Rieszn}
\left\langle\phi^{D, n}, \phi\right\rangle_{L^2_{\rho}} = \frac{1}{N} \sum_{i=1}^N \int R^n_{\phi}\left[p_{t_i}\right](x) f_{t_i}(x) d x, \quad 
R_\phi^n\left[p_{t_i}\right](x):=\sum_{\ell=1}^n \int_\delta^{R_0} Q\left[p_{t_i}\right](x, r_{\ell})\mathbf{1}_{I_{\ell}}(r) \phi(r) d r. 
\end{equation}
In other words, ${\phi}_\lambda^{\infty}$ is the regularized estimator in the continuous space $H_{\Gbar}$; $\hat{\phi}_\lambda^n$ is an estimator in the finite-dimensional RKHS $H_{\Gbar^n}$ when $\Gbar$ is approximated by a piecewise constant function $\Gbar^n$ in $r$; and $\hat\phi_\lambda^{n,M}$ is the fully discrete regularized estimator based on $\Gbar^M$, which is discretized in both $r$ and $x$.

We bound these errors by the lemmas below, whose proofs are postponed to Appendix~\ref{sec:tech_lemmas}.

\begin{lemma}[Regularization Error] \label{lem: regerror}
Under Assumption \eqref{H:source_condi} on the source and Assumptions \ref{H:boundf} which implies the operator bound for $\mathcal{L}_{\bar{G}}$ in Lemma \ref{Lem: HSoperator}, the regularization error satisfies
$$
\left\|{\phi}_\lambda^{\infty}-\phi^*\right\|_{L_\rho^2} \leq C\left(\beta, C_1\right)  R  \lambda^{\min \{\beta / 4,1\}},
$$
where $C\left(\beta, C_1\right) = \left(\frac{\beta}{4}\right)^{\beta / 4}\left(1-\frac{\beta}{4}\right)^{1-\beta / 4}$ if $0<\beta\leq 4$ and $C\left(\beta, C_1\right)= C_1^{\frac{\beta}{2}-2}$ if $\beta>4$. 

\end{lemma}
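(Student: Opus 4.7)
The plan is to work directly with the spectral calculus of the compact positive semi-definite operator $\mathcal{L}_{\Gbar}$ and convert the bound into a scalar sup over eigenvalues. First I would rewrite the bias
\[
{\phi}_\lambda^{\infty}-\phi^*
= \bigl[(\mathcal{L}_{\Gbar}^2+\lambda I)^{-1}\mathcal{L}_{\Gbar}^2 - I\bigr]\phi^*
= -\lambda\,(\mathcal{L}_{\Gbar}^2+\lambda I)^{-1}\phi^*,
\]
and then invoke the source condition \eqref{H:source_condi}, $\phi^*=\mathcal{L}_{\Gbar}^{\beta/2}w$ with $\|w\|_{L_\rho^2}\le R$, to obtain
\[
\|{\phi}_\lambda^{\infty}-\phi^*\|_{L_\rho^2}
\le \lambda\,\bigl\|(\mathcal{L}_{\Gbar}^2+\lambda I)^{-1}\mathcal{L}_{\Gbar}^{\beta/2}\bigr\|\cdot R.
\]
Since $\mathcal{L}_{\Gbar}$ is self-adjoint, positive semi-definite and compact, functional calculus reduces the operator norm to the scalar quantity $\sup_{\mu\in\sigma(\mathcal{L}_{\Gbar})} \mu^{\beta/2}/(\mu^2+\lambda)$, so it remains to bound $\lambda\,\sup_{\mu\in[0,C_1]} \mu^{\beta/2}/(\mu^2+\lambda)$, where $C_1$ is the operator-norm bound on $\mathcal{L}_{\Gbar}$ supplied by Lemma \ref{Lem: HSoperator}.

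In the regime $0<\beta\le 4$, I would handle this maximum analytically via the substitution $\mu=\sqrt{\lambda}\,t$, which gives
\[
\lambda\,\frac{\mu^{\beta/2}}{\mu^2+\lambda}
= \lambda^{\beta/4}\,\frac{t^{\beta/2}}{t^2+1}.
\]
Differentiating $h(t)=t^{\beta/2}/(t^2+1)$ shows the unique interior maximizer is $t_*^2=\beta/(4-\beta)$ (the endpoint $\beta=4$ is handled as a limit), at which a short calculation yields $h(t_*)=(\beta/4)^{\beta/4}(1-\beta/4)^{1-\beta/4}$. This produces exactly the stated constant and the rate $\lambda^{\beta/4}$.

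In the regime $\beta>4$, the naive substitution argument fails because $h(t)$ is unbounded on $[0,\infty)$, so I would instead exploit the uniform bound $\mu\le C_1$ together with the trivial inequality $\mu^2+\lambda\ge \mu^2$ to obtain
\[
\lambda\,\frac{\mu^{\beta/2}}{\mu^2+\lambda}
\le \lambda\,\mu^{\beta/2-2}
\le C_1^{\beta/2-2}\,\lambda,
\]
which gives the saturated rate $\lambda^{\min\{\beta/4,1\}}=\lambda$ and the constant $C_1^{\beta/2-2}$. Multiplying each case by $R$ and combining them yields the claimed bound. I expect the step requiring the most care to be the second case: it is where the saturation phenomenon enters, and it is crucial that Lemma \ref{Lem: HSoperator} provides a \emph{uniform} operator-norm bound on $\mathcal{L}_{\Gbar}$, since otherwise $\mu^{\beta/2-2}$ could not be controlled.
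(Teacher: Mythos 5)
Your proof is correct and follows essentially the same route as the paper's: both reduce to the scalar bound on the spectrum via $-\lambda(\mathcal{L}_{\Gbar}^2+\lambda I)^{-1}\mathcal{L}_{\Gbar}^{\beta/2}$, rescale to pull out $\lambda^{\beta/4}$, optimize the resulting rational function when $0<\beta\le4$, and use the uniform bound $\|\mathcal{L}_{\Gbar}\|\le C_1$ to saturate the rate at $\lambda$ when $\beta>4$. The only cosmetic difference is that the paper parametrizes by $t\in\sigma(\mathcal{L}_{\Gbar}^2)$ while you parametrize by $\mu\in\sigma(\mathcal{L}_{\Gbar})$ with $t=\mu^2$; the computations and constants agree.
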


\begin{lemma}[Approximation Error] \label{lem:approxerror}
Under Assumptions \eqref{H:source_condi} and \ref{H:constant} on the source, for $0<\lambda \leq 1$,  the error between ${\phi}_\lambda^{\infty}$ and $\hat{\phi}_\lambda^n$ defined \eqref{eq:defphi} has a bound 
$$
\left\|\hat{\phi}_\lambda^n-\phi_\lambda^{\infty}\right\|_{L_\rho^2} \leq C_L \Delta x \sqrt{\frac{\mathcal{N}(\lambda)}{\lambda}}+C_L \sqrt{2 C_1 C_2} \frac{\Delta x^{3 / 2}}{\lambda}+C_\beta\frac{\Delta x}{\sqrt{\lambda}},
$$
where $C_L = 2 C_0\sqrt{C}\left|\Omega_{R_0}\right| C'_{\max} \sqrt{\frac{R_0}{\rho_{0}}}$ and $C_\beta: = C_2 C_1^{\beta / 2} R + \frac{C_2}{2} C\left(\beta, C_1\right)R$. 
\end{lemma}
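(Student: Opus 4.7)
The plan is to split $\hat{\phi}_\lambda^n - \phi_\lambda^\infty$ into two discretization-error contributions: one from replacing the Riesz representer $\phi^D$ by its $r$-discrete counterpart $\phi^{D,n}$, and one from replacing the normal operator $A := \mathcal{L}_{\Gbar}$ by the piecewise-constant kernel operator $A_n := \mathcal{L}_{\Gbar^n}$. Using $\mathcal{L}_{\Gbar}\phi_* = \phi^D$ from Lemma \ref{lemma:rk}, so that $\phi_\lambda^\infty = (A^2+\lambda I)^{-1} A\,\phi^D$ in the notation of \eqref{eq:defphi}, I would begin with the decomposition
\begin{equation*}
\hat{\phi}_\lambda^n - \phi_\lambda^\infty
= (A_n^2+\lambda I)^{-1} A_n\,(\phi^{D,n} - \phi^D)
+ \bigl[(A_n^2+\lambda I)^{-1} A_n - (A^2+\lambda I)^{-1} A\bigr]\phi^D.
\end{equation*}
For the \emph{data term}, the spectral bound $\|(A_n^2+\lambda I)^{-1} A_n\|_{\operatorname{op}}\le 1/(2\sqrt{\lambda})$ reduces the task to controlling $\|\phi^{D,n} - \phi^D\|_{L^2_\rho}$ via \eqref{eq:Rieszn}: on each subinterval $I_\ell$, the uniform Lipschitz estimate for $p_{t_i}$ in Remark \ref{rem:regularity} bounds $|Q[p_{t_i}](x, r) - Q[p_{t_i}](x, r_\ell)|$ by $O(\Delta x)$, and the uniform bound on $f_{t_i}$ from Assumption \ref{H:boundf} closes the argument.

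For the \emph{operator term}, I would use the second resolvent identity $(A_n^2+\lambda I)^{-1}A_n-(A^2+\lambda I)^{-1}A=(A_n^2+\lambda I)^{-1}(A_n-A)+(A_n^2+\lambda I)^{-1}(A^2-A_n^2)(A^2+\lambda I)^{-1}A$ together with $A^2-A_n^2=A(A-A_n)+(A-A_n)A_n$, and bound each summand by combining the filter inequalities $\|(A_n^2+\lambda I)^{-1/2}\|_{\operatorname{op}}\le\lambda^{-1/2}$ and $\|A(A^2+\lambda I)^{-1/2}\|_{\operatorname{op}}\le 1$ with $\phi^D=A\phi_*=A^{1+\beta/2}w$ from Assumption \ref{H:source_condi}. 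The piece $(A_n^2+\lambda I)^{-1}(A-A_n)A^{1+\beta/2}w$ produces the $C_\beta\Delta x/\sqrt{\lambda}$ contribution once $\|w\|\le R$ and the regularization-bias bound of Lemma \ref{lem: regerror} are applied; the residual quadratic-in-$(A-A_n)$ piece yields the $\Delta x^{3/2}/\lambda$ term through a Cauchy--Schwarz step on $\{I_\ell\}$ that trades one factor of $\Delta x$ for $\sqrt{\Delta x}$.

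To produce the leading $C_L\Delta x\sqrt{\mathcal{N}(\lambda)/\lambda}$ term, I would handle the cross term $(A_n^2+\lambda I)^{-1}(A-A_n)A(A^2+\lambda I)^{-1}A\phi_*$ not through the crude operator-norm bound on $A-A_n$, but by writing $A-A_n$ as a telescoping sum of Riemann-quadrature residual integral operators indexed by the partition $\{I_\ell\}_{\ell=1}^n$. Pairing these residuals with the smoothing factor $A(A^2+\lambda I)^{-1/2}$ and collecting $L^2_\rho$-norms by Cauchy--Schwarz across the grid gives an upper bound proportional to $\Delta x\cdot\|A(A^2+\lambda I)^{-1/2}\|_{HS}$, and by the definition in \eqref{eq:defNeff},
\begin{equation*}
\|A(A^2+\lambda I)^{-1/2}\|_{HS}^2 = \operatorname{Tr}\bigl(A^2(A^2+\lambda I)^{-1}\bigr) = \mathcal{N}(\lambda),
\end{equation*}
while the remaining $(A_n^2+\lambda I)^{-1/2}$ supplies the outer $\lambda^{-1/2}$. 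The constant $C_L=2C_0\sqrt{C}|\Omega_{R_0}|C'_{\max}\sqrt{R_0/\rho_0}$ then emerges from the uniform $L^\infty$ bounds in Remark \ref{rem:regularity}, the diameter of $\Omega_{R_0}$, and the constant-$\rho$ normalization enforced by Assumption \ref{H:constant}.

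The main obstacle I anticipate is precisely this Hilbert--Schmidt reformulation: upgrading the coarse estimate $\|A-A_n\|_{\operatorname{op}}=O(\Delta x)$ into a bound that couples to $\mathcal{N}(\lambda)$ requires expressing $A-A_n$ as an explicit sum of quadrature-residual integral operators localized on the partition intervals, controlling each residual through Lipschitz regularity of $\Gbar$ in $(r,s)$ (which itself relies on Assumption \ref{H:constant} to pass regularity from $G$ to $\Gbar=G/(\rho\otimes\rho)$), and then commuting the finite operator sum with the HS trace to expose $\mathcal{N}(\lambda)$. Once this reformulation is in place, assembling the data term and the three pieces of the operator term by the triangle inequality yields the stated bound.
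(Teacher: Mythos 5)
Your opening decomposition (data term plus operator term, via the resolvent identity applied to $\hat\phi_\lambda^n-\phi_\lambda^\infty$) matches the paper's. The gap lies in where the effective-dimension factor $\mathcal{N}(\lambda)$ comes from, and in the operator-term analysis.

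First, the paper generates the leading term $C_L\,\Delta x\,\sqrt{\mathcal{N}(\lambda)/\lambda}$ entirely from the \emph{data} term $\mathcal{A}_n^{-1}\mathcal{L}_{\Gbar^n}(\phi^{D,n}-\phi^D)$, via Lemma \ref{lem:effdim}: the operator $\mathcal{A}_n^{-1}\mathcal{L}_{\Gbar^n}$ is bounded in Hilbert--Schmidt norm by $\sqrt{\mathcal{N}_n(\lambda)/\lambda}$, and $\mathcal{N}_n(\lambda)$ is then replaced by $\mathcal{N}(\lambda)$ using the trace-class perturbation estimate $|\mathcal{N}_n(\lambda)-\mathcal{N}(\lambda)|\le 2C_1C_2\,\Delta x/\lambda$ (Lemma \ref{lem:diff-effdim}); this is exactly what produces the $\Delta x^{3/2}/\lambda$ correction. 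Your plan to bound the data term by the operator-norm filter estimate $\|\mathcal{A}_n^{-1}\mathcal{L}_{\Gbar^n}\|_{\mathrm{op}}\le 1/(2\sqrt{\lambda})$ is not wrong (it is in fact a smaller bound than the HS one), but it means neither $\mathcal{N}(\lambda)$ nor $\Delta x^{3/2}$ naturally arises there, and you still need to produce those factors elsewhere.

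Second, your plan to extract $\mathcal{N}(\lambda)$ from the operator cross term $(A_n^2+\lambda I)^{-1}(A-A_n)A(A^2+\lambda I)^{-1}A\phi_*$ by telescoping $A-A_n$ and pairing with $\|A(A^2+\lambda I)^{-1/2}\|_{HS}$ does not work: the object being acted on is a single fixed vector, so only the operator norm $\|A(A^2+\lambda I)^{-1/2}\|_{\mathrm{op}}\le 1$ is relevant, and replacing it by the Hilbert--Schmidt norm $\sqrt{\mathcal{N}(\lambda)}\gg 1$ would only worsen the bound. There is also no mechanism in your Cauchy--Schwarz telescoping that converts the quadratic piece $O(\Delta x^2)$ into $\Delta x^{3/2}$; that exponent in the lemma comes specifically from the Mirsky trace bound underlying Lemma \ref{lem:diff-effdim}, not from a partition argument.

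Third, the piece $(A_n^2+\lambda I)^{-1}(A_n-A)\phi^D$ with $\phi^D=A^{1+\beta/2}w$ does \emph{not} directly give $C_\beta\,\Delta x/\sqrt{\lambda}$: the crude bound is $\|\mathcal{A}_n^{-1}\|\,\|A_n-A\|\,\|\phi^D\|=O(\Delta x/\lambda)$, which is too large. You need an algebraic rearrangement to insert a factor $A_n$ against $\mathcal{A}_n^{-1}$. The paper's key identity is
\begin{equation*}
\mathcal{A}_n^{-1}(A_n-A)\phi^D+\mathcal{A}_n^{-1}(A^2-A_n^2)\phi_\lambda^\infty
=\mathcal{A}_n^{-1}A_n(A-A_n)\phi^*+\mathcal{A}_n^{-1}(A_n^2-A^2)(\phi^*-\phi_\lambda^\infty),
\end{equation*}
which simultaneously (i) places $\mathcal{A}_n^{-1}A_n$ (of operator norm $1/(2\sqrt{\lambda})$) in front of the first piece, and (ii) replaces $\phi_\lambda^\infty$ by the small regularization error $\phi^*-\phi_\lambda^\infty$ in the second. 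That second step, together with the spectral estimate $\|A(\phi^*-\phi_\lambda^\infty)\|\le\frac{\sqrt{\lambda}}{2}\|\phi^*\|$, is what reduces the apparent $\Delta x/\lambda$ to $\Delta x/\sqrt{\lambda}$. Your sketch omits this rearrangement; without it the operator term overshoots the stated bound by a factor $1/\sqrt{\lambda}$.

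Concretely: keep your data/operator split and your Lipschitz estimate for $\|\phi^{D,n}-\phi^D\|$, but apply the Hilbert--Schmidt bound of Lemma \ref{lem:effdim} to the data term (and then Lemma \ref{lem:diff-effdim} for $\mathcal{N}_n\to\mathcal{N}$), and for the operator term replace your naive resolvent expansion by the algebraic identity above before estimating piece by piece. That gives exactly the three terms in the lemma.
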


\begin{lemma}[Numerical error]\label{lem:numerroroperator}
Under Assumption \ref{H:boundf} on $p_{t_i}$, the fully discrete estimator
$\hat{\phi}_\lambda^{n,M}$ satisfies, for all $\lambda>0$,
$$
\begin{aligned}
\|\hat{\phi}_\lambda^{n,M}-\hat{\phi}_\lambda^n\|_{L^2_\rho}
&\le \|T_1\|_{L^2_\rho} + \|T_2\|_{L^2_\rho} + \|T_3\|_{L^2_\rho} \\
& \leq K_1\left(\Delta t+\Delta x+\Delta x^2\right)\left(\sqrt{\frac{\mathcal{N}(\lambda)}{\lambda}}+\sqrt{\frac{\Delta x}{\lambda^2}}\right) \\ & +K_2 \frac{\Delta x}{\lambda}+K_3 \frac{\Delta x^2}{\lambda} \sqrt{\frac{\mathcal{N}(\lambda)}{\lambda}}+K_4 \frac{\Delta x^{5 / 2}}{\lambda^2} + K_5\frac{\Delta x^{2}}{\lambda^{3/2}}
\end{aligned}
$$
Here, constants $K_i>0$ are independent of $\Delta x,\,\Delta t,\,\mathcal{N}(\lambda)$ and $\lambda$.
\end{lemma}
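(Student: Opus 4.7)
\textbf{Proposal for proof of Lemma~\ref{lem:numerroroperator}.}

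The plan is to decompose the error $\hat\phi_\lambda^{n,M}-\hat\phi_\lambda^n$ into three operator-theoretic pieces via the resolvent identity $A^{-1}-B^{-1}=A^{-1}(B-A)B^{-1}$, and bound each with a combination of standard spectral–calculus inequalities ($\|(\mathcal{L}^2+\lambda I)^{-1}\|\le\lambda^{-1}$, $\|\mathcal{L}(\mathcal{L}^2+\lambda I)^{-1}\|\le(2\sqrt{\lambda})^{-1}$), perturbation estimates on the kernels, a data-perturbation estimate on the Riesz representers, and the effective-dimension machinery already used in Lemma~\ref{lem:approxerror}. Writing $A_M=\mathcal{L}_{\Gbar^M}^2+\lambda I$ and $A_n=\mathcal{L}_{\Gbar^n}^2+\lambda I$, I split
$$
\hat\phi_\lambda^{n,M}-\hat\phi_\lambda^n
= \underbrace{A_M^{-1}\mathcal{L}_{\Gbar^M}(\phi^{D,M}-\phi^{D,n})}_{T_1}
+\underbrace{A_M^{-1}(\mathcal{L}_{\Gbar^M}-\mathcal{L}_{\Gbar^n})\phi^{D,n}}_{T_2}
+\underbrace{(A_M^{-1}-A_n^{-1})\mathcal{L}_{\Gbar^n}\phi^{D,n}}_{T_3}.
$$

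The term $T_1$ carries the error in the Riesz data $\phi^{D,M}-\phi^{D,n}$. Since $R_\phi^M[p_{t_i}](x_m)=R_\phi^n[p_{t_i}](x_m)$, the difference $\langle\phi^{D,M}-\phi^{D,n},\phi\rangle_{L^2_\rho}$ splits into (i) the Riemann-sum error in $x$ for $\int R_\phi^n[p_{t_i}]\,f_{t_i}\,dx$, which is $O(\Delta x)$ by Lipschitz regularity from Remark~\ref{rem:regularity} and Assumption~\ref{H:boundf}, and (ii) the finite-difference error $|\tilde f_{t_i}(x_m)-f_{t_i}(x_m)|=O(\Delta t+\Delta x^2)$ obtained from Taylor expansion of \eqref{eq: discretef} using the $W^{3,\infty}$ and $W^{1,\infty}$ bounds of Assumption~\ref{H:boundf}. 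To obtain the $\sqrt{\mathcal{N}(\lambda)/\lambda}$ factor instead of the crude $1/\sqrt{\lambda}$, I would combine the spectral bound on $\mathcal{L}_{\Gbar^M}A_M^{-1}$ with a Hilbert--Schmidt/effective-dimension estimate analogous to the one used in Lemma~\ref{lem:approxerror}: testing $T_1$ against an $L^2_\rho$-orthonormal eigensystem of $\mathcal{L}_{\Gbar^M}$ and summing $\sigma_k^2/(\sigma_k^2+\lambda)^2$ yields $\mathrm{Tr}(\mathcal{L}_{\Gbar^M}^2 A_M^{-2})\lesssim \mathcal{N}(\lambda)$, while the residual contribution from quadrature at scale $\Delta x$ is absorbed into the extra $\sqrt{\Delta x/\lambda^2}$ term.

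For $T_2$ and $T_3$ the central quantity is the kernel perturbation $\mathcal{L}_{\Gbar^M}-\mathcal{L}_{\Gbar^n}$. By the definitions in \eqref{eq: discreteG} and \eqref{eq:Gbar^n} and Assumption~\ref{H:constant} (so $\rho=\rho_0$ is constant), the kernels $\Gbar^M$ and $\Gbar^n$ differ only through the Riemann sum in $x$ inside $G^M$ versus the exact integral in $G$; Lipschitz continuity of $x\mapsto Q[p_{t_i}](x,r)$ gives $\|\Gbar^M-\Gbar^n\|_{L^\infty}=O(\Delta x)$, hence $\|\mathcal{L}_{\Gbar^M}-\mathcal{L}_{\Gbar^n}\|_{L^2_\rho\to L^2_\rho}=O(\Delta x)$. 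Combined with $\|A_M^{-1}\|\le\lambda^{-1}$ and a uniform bound on $\|\phi^{D,n}\|_{L^2_\rho}$ (which is inherited from Remark~\ref{rem:regularity}), this immediately yields $\|T_2\|_{L^2_\rho}\lesssim \Delta x/\lambda$, giving the $K_2\Delta x/\lambda$ term. For $T_3$, applying the resolvent identity $A_M^{-1}-A_n^{-1}=A_M^{-1}(\mathcal{L}_{\Gbar^n}^2-\mathcal{L}_{\Gbar^M}^2)A_n^{-1}$ and the factorization $\mathcal{L}_{\Gbar^n}^2-\mathcal{L}_{\Gbar^M}^2=\mathcal{L}_{\Gbar^n}(\mathcal{L}_{\Gbar^n}-\mathcal{L}_{\Gbar^M})+(\mathcal{L}_{\Gbar^n}-\mathcal{L}_{\Gbar^M})\mathcal{L}_{\Gbar^M}$, I bound $\|T_3\|$ by using $\|\mathcal{L} A^{-1}\|\le(2\sqrt{\lambda})^{-1}$ on one factor and $\|A^{-1}\|\le\lambda^{-1}$ on the other, and then re-expressing $A_n^{-1}\mathcal{L}_{\Gbar^n}\phi^{D,n}=\hat\phi_\lambda^n$ to exploit the effective-dimension trace bound $\|\hat\phi_\lambda^n\|\le\|\phi^*\|+O(\sqrt{\mathcal{N}(\lambda)})$; this produces the mixed terms $K_3\,(\Delta x^2/\lambda)\sqrt{\mathcal{N}(\lambda)/\lambda}$, $K_4\,\Delta x^{5/2}/\lambda^2$, and $K_5\,\Delta x^2/\lambda^{3/2}$.

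The main obstacle will be Step~2: cleanly extracting the effective-dimension refinement in $T_1$ rather than settling for the crude $C(\Delta t+\Delta x+\Delta x^2)/\sqrt{\lambda}$ bound, and tracking how the pointwise quadrature error in $x$ (which introduces an extra $\sqrt{\Delta x/\lambda^2}$) composes with the finite-difference error in $t$. Concretely, I will need to express $\phi^{D,M}-\phi^{D,n}$ as the Riesz representer of a bilinear form whose kernel has an almost-factored structure $Q[p_{t_i}]\cdot(\tilde f_{t_i}-f_{t_i})+ (\text{quadrature residual})$, then apply an inner-product inequality of the form $\|A_M^{-1}\mathcal{L}_{\Gbar^M}\xi\|_{L^2_\rho}\le \sqrt{\mathrm{Tr}(\mathcal{L}_{\Gbar^M}^2 A_M^{-2})}\cdot \|\xi\|_\infty+(\text{remainder})$, which is the deterministic analogue of the variance bound in \cite{zhang2025minimax}. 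Gathering the resulting pieces and absorbing all harmless lower-order terms into the five $K_i$-constants then yields the stated estimate.
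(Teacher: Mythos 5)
Your decomposition is essentially identical to the paper's. Your $T_3=(A_M^{-1}-A_n^{-1})\mathcal{L}_{\Gbar^n}\phi^{D,n}$ is algebraically the same as the paper's $T_3=\mathcal{A}_M^{-1}(\mathcal{B}_n-\mathcal{B}_M)\mathcal{A}_n^{-1}\mathcal{L}_{\Gbar^n}\phi^{D,n}$ after one application of the resolvent identity, and your bounds for $T_1,T_2,T_3$ mirror the paper's: for $T_1$, the paper applies the effective-dimension bound $\|\mathcal{A}_M^{-1}\mathcal{L}_{\Gbar^M}\xi\|\le\sqrt{\mathcal{N}_M(\lambda)/\lambda}\,\|\xi\|_{L^2_\rho}$ from Lemma~\ref{lem:effdim}, splits $\phi^{D,M}-\phi^{D,n}$ into the finite-difference error $|\tilde f_{t_i}-f_{t_i}|=O(\Delta t+\Delta x^2)$ and the Riemann-sum error $O(\Delta x)$, and then converts $\mathcal{N}_M$ to $\mathcal{N}$ via Lemma~\ref{lem:diff-effdim}, producing the extra $\sqrt{\Delta x/\lambda^2}$ term you correctly anticipate; for $T_2$ and $T_3$ it uses $\|\mathcal{A}_M^{-1}\|\le 1/\lambda$, $\|\mathcal{L}_{\Gbar^M}-\mathcal{L}_{\Gbar^n}\|\le C_2\Delta x$ and the factorization of $\mathcal{B}_n-\mathcal{B}_M$, then re-expresses $\mathcal{A}_n^{-1}\mathcal{L}_{\Gbar^n}\phi^{D,n}=\hat\phi_\lambda^n$ exactly as you propose.

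Two imprecisions in your $T_3$ step would need correcting before the arithmetic matches the stated rates. First, the bound you write as $\|\hat\phi_\lambda^n\|\le\|\phi^*\|+O(\sqrt{\mathcal{N}(\lambda)})$ is not what Lemma~\ref{lem:approxerror} gives; the correct estimate, obtained by $\|\hat\phi_\lambda^n\|\le\|\hat\phi_\lambda^n-\phi_\lambda^\infty\|+\|\phi_\lambda^\infty\|$ together with $\|\phi_\lambda^\infty\|\le C_1^{\beta/2}R$, is
\[
\|\hat\phi_\lambda^n\|_{L^2_\rho}\le C_L\,\Delta x\sqrt{\tfrac{\mathcal{N}(\lambda)}{\lambda}}+C_L\sqrt{2C_1C_2}\,\tfrac{\Delta x^{3/2}}{\lambda}+C_\beta\tfrac{\Delta x}{\sqrt{\lambda}}+C_1^{\beta/2}R,
\]
and it is the $\Delta x$ factors here (not a bare $\sqrt{\mathcal{N}(\lambda)}$) that, upon multiplication by $\Delta x/\lambda$, produce your $K_3\,\Delta x^2\lambda^{-1}\sqrt{\mathcal{N}(\lambda)/\lambda}$, $K_4\,\Delta x^{5/2}/\lambda^2$ and $K_5\,\Delta x^2/\lambda^{3/2}$ terms. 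Second, inserting $\|\mathcal{L}\mathcal{A}^{-1}\|\le(2\sqrt{\lambda})^{-1}$ into the $T_3$ factorization is not needed and, if used instead of $\|\mathcal{A}_n^{-1}\mathcal{L}_{\Gbar^n}\phi^{D,n}\|=\|\hat\phi_\lambda^n\|$, would give a $\Delta x/\lambda^{3/2}$ leading term rather than the $\Delta x/\lambda$ in the statement; the re-expression as $\hat\phi_\lambda^n$ is the one that yields the correct leading order. Similarly, your $T_1$ inequality should be stated with $\|\xi\|_{L^2_\rho}$ rather than $\|\xi\|_\infty$ on the right to match Lemma~\ref{lem:effdim}, though the pointwise bounds on $Q[p_{t_i}]$ and $f_{t_i}$ let one pass from $L^\infty$ to $L^2_\rho$ at the cost of a harmless constant. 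With these fixes your outline is the paper's proof.
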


%%%%%%%%%%%

\begin{proof}[Proof of Theorem \ref{thm:conv}]
According to Lemma \ref{lem: regerror}, Lemma \ref{lem:approxerror} and Lemma \ref{lem:numerroroperator}, we get
$$
\begin{aligned}
\left\|\hat{\phi}_\lambda^{n, M}-\phi^*\right\|_{L_\rho^2} \leq & \left\|\hat{\phi}_\lambda^{n, M}-\hat{\phi}_\lambda^n\right\|_{L^2_{\rho}}+\left\|\hat{\phi}_\lambda^n-\hat{\phi}_\lambda^{\infty}\right\|_{L_\rho^2}+\left\|\phi_\lambda^{\infty}-\phi^*\right\|_{L_\rho^2} \\
\leq & C\left(\beta, C_1\right) R \lambda^{\min \{\beta / 4,1\}} + C_L \Delta x \sqrt{\frac{\mathcal{N}(\lambda)}{\lambda}}+C_L \sqrt{2 C_1 C_2} \frac{\Delta x^{3 / 2}}{\lambda}+C_\beta\frac{\Delta x}{\sqrt{\lambda}}\\
 + & K_1\bigl(\Delta t+\Delta x+\Delta x^2\bigr) \left( \sqrt{\frac{\mathcal{N}(\lambda)}{\lambda}} + \sqrt{\frac{\Delta x}{\lambda^2}}\right) + K_2\,\frac{\Delta x}{\lambda}
+ K_3 \frac{\Delta x^2}{\lambda} \sqrt{\frac{\mathcal{N}(\lambda)}{\lambda}}\\
& + K_4 \frac{\Delta x^{5 / 2}}{\lambda^2}+K_5 \frac{\Delta x^2}{\lambda^{3 / 2}}
\end{aligned}
$$
By Lemma \ref{lemma:effdim_Bd} and the spcetral decay condition \ref{H:spectral_decay},  the effective dimension exhibits polynomial decay $\mathcal{N}(\lambda) \asymp \lambda^{-1 /(4 \gamma)} $, so 
$\sqrt{\frac{\mathcal{N}(\lambda)}{\lambda}} \asymp \lambda^{- \frac{1+ 4 \gamma}{8 \gamma}} $. 
Ignoring constant factors, with
sufficiently small mesh $\Delta x$ and under a suitable Courant-Friedrichs-Lewy condition $\Delta t \leq \Delta x^2$, all higher-order terms in $\Delta x$ are absorbed into the leading $\Delta x$-order contributions and the bound reduces to
$$
\left\|\hat{\phi}_\lambda^{n, M}-\phi^*\right\|_{L_\rho^2} \lesssim \lambda^{\min \{\beta / 4,1\}}+\Delta x \lambda^{-\frac{1+4 \gamma}{8 \gamma}}+\Delta x\lambda^{-1/2}+\Delta x\lambda^{-1}.
$$
Since $\gamma>1/4$ , $\{-\frac{1+4 \gamma}{8 \gamma}, -1/2\}>-1 $, so the $\Delta x / \lambda$ term dominates the $\Delta x$-dependent contributions. Consequently, the leading trade-off is between the bias and $\Delta x / \lambda$, namely
$$
\lambda^{\min \{\beta / 4,1\}} \sim \Delta x \lambda^{-1}
$$
Hence, the optimal regularization hyperparameter $\lambda^*$ has
$$
\lambda^* \sim(\Delta x)^\alpha, \;\text { with } \; \alpha=\frac{1}{\min \{\beta / 4,1\}+1} = \begin{cases}\frac{4}{\beta+4}, & 0<\beta \leq 4 \\ \frac{1}{2}, & \beta>4\end{cases}
$$
Then we obtain the convergence rate
$\left\|\hat{\phi}_\lambda^{n, M}-\phi^*\right\|_{L_o^2} \lesssim(\Delta x)^{\alpha \min \{\beta / 4,1\}}$.
\end{proof}

%%%%%%%%%%%% 
%%%%%%%%%%%%
\section{Hyperparameter selection via Bi-level optimization}
\label{secbilevel}

Selection of the regularization parameter is crucial for obtaining a stable solution for this ill-posed inverse problem. An overly large parameter can cause underfitting, leading to inaccurate solutions, while an excessively small parameter may result in overfitting, producing unstable solutions. While classical methods such as Generalized Cross-Validation (GCV) \cite{golub1979generalized} and the L-curve \cite{hansen1992analysis} are widely used, they may suffer from numerical instability \cite{lu2022data,li2025automatic}.

To address these limitations, we investigate a bi-level optimization \cite{chada2022consistency,holler2018bilevel} that selects the hyperparameter by 
     minimizing generalization error on validation data. This approach ensures robust parameter selection and can be easily extended to the simultaneous tuning of multiple hyperparameters.    
 Its ``upper-level'' selects the hyperparameter by minimizing the validation error, and the ``lower-level'' solves the regularized regression problem. 
 From \eqref{eq:discreteloss}, the bi-level optimization reads:
\begin{equation}
\label{eq:bilevel}
\begin{aligned}
\text{Upper level:} \quad \gamma_{\text {opt }} & =\underset{\gamma}{\operatorname{argmin}} F\left(\gamma\right), \quad F\left(\gamma\right):= \mathcal{E}(\mathbf{c}(\gamma)) = \| \mathbf{Q_U}\bar{\mathbf{G}}\mathbf{c} (\gamma) (\Delta r) - \mathbf{f_U}\|^2;  \\
\text { Lower level: }\quad  \mathbf{c}(\gamma) & =\underset{\mathbf{c}}{\operatorname{argmin}} \Psi\left(\gamma,\mathbf{c}\right), \quad 
\Psi\left(\gamma,\mathbf{c}\right) := \| \mathbf{Q_L}\bar{\mathbf{G}}\mathbf{c} (\Delta r) - \mathbf{f_L}\|^2+ 10^{\gamma} \| \mathbf{c}^\top  \sqrt{\bar{\mathbf{G}}}\|^2.
\end{aligned}
\end{equation}
Here, we divide the data into a training dataset and a validation dataset by collecting samples at distinct times, and denote them by $\left(\mathbf{f_L}, \mathbf{Q_L}\right)$ and $\left(\mathbf{f_U}, \mathbf{Q_U}\right)$, respectively. We parameterize $\lambda$ as $\lambda=10^\gamma$ to accommodate the typical scale of $\lambda$ and to optimize over an unconstrained variable. We refer to the outer minimization as the upper-level (UL) problem and the inner minimization as the lower-level (LL) problem.

The lower-level optimization problem is a Tikhonov-regularized least-squares problem. 
Since \(\Psi(\gamma,\mathbf{c})\) is quadratic in \(\mathbf{c}\), it admits the explicit solution
\begin{equation}
\label{eq:LLOP}
\mathbf{c}(\gamma)
= \big[(\Delta r)^{\top}\bar{\mathbf{G}}^{\top}\mathbf{Q}_{\mathbf{L}}^{\top}\mathbf{Q}_{\mathbf{L}}\bar{\mathbf{G}}(\Delta r)
+ 10^{\gamma}\bar{\mathbf{G}}\big]^{-1}\bar{\mathbf{G}}\mathbf{Q}_{\mathbf{L}}^{\top}\mathbf{f}_{\mathbf{L}}\,\Delta r .
\end{equation}
For any fixed regularization hyperparameter, it has a unique minimizer.

The upper-level optimization is solved by gradient descent with the update
\begin{equation}
\label{eq:gradient}
\gamma_{k+1}=\gamma_k-\eta_k\,\partial_\gamma F(\gamma_k),
\qquad 
\end{equation}
where $\eta_k= \eta/\sqrt{k}$ is the learning rate \cite{kingma2014adam}, with $\eta$ the initial learning rate and $k$ the iteration index.
By the chain rule,
\begin{equation}
\label{eq:chain}
\partial_\gamma F(\gamma)=\big(\partial_{\mathbf{c}}\mathcal{E}(\mathbf{c}(\gamma))\big)^{\top}\partial_\gamma \mathbf{c}(\gamma).
\end{equation}
Thus, the key task is to evaluate $\partial_\gamma \mathbf{c}(\gamma)$. Existing approaches fall into two classes \cite{liu2021investigating}: (i) explicit gradients, obtained by automatic differentiation through all lower-level iterations, which are memory-intensive; and (ii) implicit gradients, based on the implicit function theorem and the inverse Hessian, typically approximated by a Neumann series \cite{lorraine2020optimizing} or conjugate gradients \cite{rajeswaran2019meta}. In our setting, the inverse Hessian can be expressed in closed form using the generalized singular value decomposition (GSVD) \cite{hansen1994regularization}, which we already compute to solve the lower-level problem.

For convenience, set
\begin{equation}
\label{eq:ALU}
\bar{\mathbf{A}}_{\mathbf{L}} \doteq \mathbf{Q}_{\mathbf{L}}\,\bar{\mathbf{G}}\,(\Delta r),
\quad
\bar{\mathbf{A}}_{\mathbf{U}} \doteq \mathbf{Q}_{\mathbf{U}}\,\bar{\mathbf{G}}\,(\Delta r), \quad \bar{\mathbf{K}}\doteq \sqrt{\bar{\mathbf{G}}}.
\end{equation}
Then \eqref{eq:LLOP} can be rewritten as
\begin{equation}
\label{eq:clower}
\mathbf{c}(\gamma)
= \big(\bar{\mathbf{A}}_{\mathbf{L}}^{\top}\bar{\mathbf{A}}_{\mathbf{L}}+10^{\gamma}\bar{\mathbf{G}}\big)^{-1}
\,\bar{\mathbf{G}}\,\mathbf{Q}_{\mathbf{L}}^{\top}\mathbf{f}_{\mathbf{L}}\,\Delta r
= \big(\bar{\mathbf{A}}_{\mathbf{L}}^{\top}\bar{\mathbf{A}}_{\mathbf{L}}+10^{\gamma}\bar{\mathbf{K}}^{\top}\bar{\mathbf{K}}\big)^{-1}
\,\bar{\mathbf{G}}\,\mathbf{Q}_{\mathbf{L}}^{\top}\mathbf{f}_{\mathbf{L}}\,\Delta r.
\end{equation}
Let the GSVD of the pair \((\bar{\mathbf{A}}_{\mathbf{L}},\bar{\mathbf{K}})\) be
\[
\bar{\mathbf{A}}_{\mathbf{L}}=\mathbf{U}_{\bar{\mathbf{A}}_{\mathbf{L}}}\,\Sigma_{\bar{\mathbf{A}}_{\mathbf{L}}}\,\mathbf{X}^{-1},
\qquad
\bar{\mathbf{K}}=\mathbf{V}_{\bar{\mathbf{K}}}\,\Sigma_{\bar{\mathbf{K}}}\,\mathbf{X}^{-1},
\]
where \(\mathbf{U}_{\bar{\mathbf{A}}_{\mathbf{L}}}\) and \(\mathbf{V}_{\bar{\mathbf{K}}}\) are orthogonal, \(\mathbf{X}\) is invertible, and 
\(\Sigma_{\bar{\mathbf{A}}_{\mathbf{L}}}=\mathrm{diag}(\sigma_i)\), \(\Sigma_{\bar{\mathbf{K}}}=\mathrm{diag}(\mu_i)\) satisfy \(\sigma_i^{2}+\mu_i^{2}=1\). Then the lower-level solution admits the filter-factor representation \cite[Eq.~(2.19)]{hansen1994regularization}
\begin{equation}
\label{eq:lowersolver}
\mathbf{c}(\gamma)
=\mathbf{X}\sum_{i=1}^{n}\frac{\sigma_i}{\sigma_i^{2}+10^{2\gamma}\mu_i^{2}}\,
\big(\mathbf{u}_i^{\top}\bar{\mathbf{G}}\,\mathbf{Q}_{\mathbf{L}}^{\top}\mathbf{f}_{\mathbf{L}}\big)\,e_i \Delta r,
\end{equation}
where \(\mathbf{u}_i\) is the \(i\)-th column of \(\mathbf{U}_{\bar{\mathbf{A}}_{\mathbf{L}}}\) and \(e_i\) is the \(i\)-th canonical basis vector.

To compute the hypergradient, we apply the implicit function theorem to the lower-level optimality condition and obtain
\begin{equation}
\label{eq:cgamma}
\partial_{\gamma}\mathbf{c}(\gamma)
= -\big(\nabla_{\mathbf{c}}^{2}\Psi(\gamma,\mathbf{c})\big)^{-1}
\,\partial_{\gamma\mathbf{c}}^{2}\Psi(\gamma,\mathbf{c}).
\end{equation}
Combining \eqref{eq:chain} and \eqref{eq:cgamma} and using the definitions in \eqref{eq:ALU} yield
\begin{equation}
\label{partialFim}
\partial_{\gamma}F(\gamma)
= -\,2\ln(10)\cdot 10^{\gamma}\,
\big(\bar{\mathbf{A}}_{\mathbf{U}}^{\top}\bar{\mathbf{A}}_{\mathbf{U}}\mathbf{c}-\bar{\mathbf{A}}_{\mathbf{U}}^{\top}\mathbf{f}_{\mathbf{U}}\big)^{\top}
\big(\bar{\mathbf{A}}_{\mathbf{L}}^{\top}\bar{\mathbf{A}}_{\mathbf{L}}+10^{\gamma}\bar{\mathbf{K}}^{\top}\bar{\mathbf{K}}\big)^{-1}
\bar{\mathbf{G}}\,\mathbf{c}.
\end{equation}
The matrix inverse in \eqref{partialFim} can be expressed via the GSVD already computed for the lower-level problem. Indeed,
\begin{equation}
\label{eq:outerinverse}
\big(\bar{\mathbf{A}}_{\mathbf{L}}^{\top}\bar{\mathbf{A}}_{\mathbf{L}}+10^{\gamma}\bar{\mathbf{K}}^{\top}\bar{\mathbf{K}}\big)^{-1}
=\mathbf{X}\,\big(\Sigma_{\bar{\mathbf{A}}_{\mathbf{L}}}^{\top}\Sigma_{\bar{\mathbf{A}}_{\mathbf{L}}}
+10^{\gamma}\Sigma_{\bar{\mathbf{K}}}^{\top}\Sigma_{\bar{\mathbf{K}}}\big)^{-1}\mathbf{X}^{\top},
\end{equation}
where the middle matrix is diagonal, so its inverse is obtained by inverting its diagonal entries. Substituting \eqref{eq:outerinverse} into \eqref{partialFim} yields the desired gradient for the upper-level update. 

To improve numerical stability and convergence speed, we use a momentum scheme with Nesterov acceleration \cite{nesterov1983method} to update $\gamma$ and apply gradient normalization \cite{baydin2018online} to mitigate potential gradient explosion. The resulting upper-level update is
\begin{equation}\label{eq:gradientnew}
\gamma_{k+1}=\gamma_k-v_{k+1},\qquad 
v_{k+1}=\iota  v_k+\eta_k\frac{\nabla_\gamma F(\gamma_k-\iota  v_k)}{\|\nabla_\gamma F(\gamma_k-\iota  v_k)\|+\epsilon},
\end{equation}
where $F(\gamma)$ is the upper-level objective, $k$ is the iteration index, $v_k$ is the momentum variable with coefficient $\iota  \in[0,1)$, and $d_k=\nabla_\gamma F\left(\gamma_k-\iota  v_k\right)$ is the look-ahead gradient. Here $\eta_k$ is the learning rate, $\|\cdot\|$ is the Euclidean norm, and $\epsilon>0$ ensures numerical stability in the gradient normalization.

We summarize the complete procedure in Algorithm~\ref{alg:one} and refer to it as \emph{bilevel-RKHS}.

\SetKwInOut{KwData}{Initialize}
\begin{algorithm}
\caption{GSVD-based bi-level optimization (bilevel-RKHS)}\label{alg:one}
\KwIn{Sample $\left\{x_j\right\}_{j=1}^J$ and $\left\{\left(p_{t_i}\left(x_j\right), f_{t_i}\left(x_j\right)\right): j=1, \ldots, J\right\}_{i=1}^N$.}
\KwData{initial learning rate $\eta >0$, momentum coefficient $\iota \in(0,1)$, max iterations $K$, tolerance $\epsilon>0$; set $v_0 = 0$, $\gamma_0 = 0$}
\textbf{Generate Dataset:} Get $(\bar{\mathbf{A}}_\mathbf{L}, \bar{\mathbf{A}}_\mathbf{U}, \mathbf{f_U}, \mathbf{f_L}, \bar{\mathbf{G}})$ via equation \eqref{eq:matrixexpression}; set $\bar{\mathbf{K}} = \sqrt{\bar{\mathbf{G}}}$.\\

$(\mathbf{U},\texttt{sm},\mathbf{X}) = \mathrm{gsvd}(\bar{\mathbf{A}}_\mathbf{L},\bar{\mathbf{K}})$\\

\For{$k=1, \ldots, K$: }{
    $\eta_k = \eta/\sqrt{k}$\\
    $\gamma=\gamma_k-\iota v_k$\\
   \textbf{Obtain the explicit solution of Lower-Level problem:}\\
   $\mathbf{c}(\gamma) =  \mathrm{tikhonov}(\mathbf{U},\texttt{sm},\mathbf{X},\mathbf{f_L},10^{\gamma})$ with equation \eqref{eq:lowersolver}\\
  \textbf{Solve the Upper-Level problem with a gradient descent method:}\\
  $v_{k+1} = \iota v_k+\eta_k \frac{\frac{d F}{d \gamma}}{\left\|\frac{d F}{d \gamma}\right\|+\epsilon}$ using equation \eqref{partialFim} \\ $\gamma_{k+1}=\gamma_k-v_{k+1}$\\ 
  }
$\lambda = 10^{\gamma_{K+1}} , \hat{\mathbf{c}} =  \mathrm{tikhonov}(\mathbf{U},\texttt{sm},\mathbf{X},\mathbf{f_L},\lambda )$, $\hat{\phi} = \bar{\mathbf{G}}\hat{\mathbf{c}}$\\
\KwOut{$\hat{\phi},\lambda$}
\end{algorithm}

In addition, to avoid unnecessary computation, we invoke an early-stopping criterion when updates of the hyperparameter $\gamma$ become negligible or the loss fails to decrease. Let $\left\{\gamma\right\}_{k \geq 1}$ denote the sequence of hyperparameters and $F\left(\gamma_k\right)$ the upper-level loss (treated as a validation loss) evaluated on the dataset $( \bar{\mathbf{A}}_\mathbf{U}, \mathbf{f_U}, \bar{\mathbf{G}})$. Given thresholds $\varepsilon_\gamma> 0, \varepsilon_F>0$ and a window length $W \in \mathbb{N}$, define $
\Delta \gamma_k:=\left|\gamma_k-\gamma_{k-1}\right|$ and $\Delta F_k:=\left|F\left(\gamma_k\right)-F\left(\gamma_{k-1}\right)\right| \quad(k \geq 1) $.
We stop at the first index
$$
K^{*}:=\inf \left\{k \geq W: \forall t \in\{k-W+1, \ldots, k\}, \Delta \gamma_t<\varepsilon_\gamma \text { and } \Delta F_t<\varepsilon_F\right\} .
$$
The window length $W$ guards against premature stopping and improves stability.

%%%%%%%%%%%%%%%%%%%%==================%%%%%%%%
%%%%%%%%%%%%%%%%%%%%==================%%%%%%%%
%%%%%%%%%%%%%%%%%%%%==================%%%%%%%%
\section{Numerical experiments}
\label{sec4}
In this section, we present numerical experiments to assess the proposed bilevel-RKHS method for estimating the L\'evy density from two types of data. The first type is generated by numerical iterative schemes that solve the Fokker-Planck equation, allowing us to validate our method in a controlled setting, since the discretization error of these schemes is explicitly known. The second type is reconstructed from discrete trajectory data via kernel density estimation (KDE), illustrating the applicability of our method to real-world observations. The codes are available at: \href{https://github.com/senyuanya/bilevel-RKHS-main}{https://github.com/senyuanya/bilevel-RKHS-main}.

\paragraph{Performance evaluation:} To evaluate performance, we use the upper-level loss and the relative $L^2_\rho$ error, defined as:
\begin{equation}
\label{eq: LossError}
\begin{gathered}
\operatorname{Loss}=\| \mathbf{Q_U}\bar{\mathbf{G}}\mathbf{c} (\gamma) (\Delta r) - \mathbf{f_U}\|^2 \\
L^2_\rho\text { Error }=\sqrt{d r \cdot \rho(r) \cdot\left(\hat{\phi}^{n,M}_{\lambda}(r)-\phi^*(r)\right)^2}.
\end{gathered}
\end{equation}
Here, $\hat{\phi}^{n,M}_{\lambda} $ represents the estimator, while $\phi^*$ denotes true L\'evy density. 

We benchmark our approach against existing methods, including the L-curve and GCV, and evaluate different regularization norms within the bilevel framework.  First, we compare our RKHS–Bilevel method with RKHS–LC and RKHS–GCV, which select the regularization parameter via the L-curve and generalized cross-validation (GCV), respectively. Second, previous work shows that the RKHS norm outperforms other norms under both L-curve and GCV selection rules \cite{li2025automatic, lu2022data}. Accordingly, within our bilevel framework, we evaluate regularizers based on the RKHS norm, the $L^2_{\rho}$-norm, and the $\ell^2$-norm. Table \ref{tab:method} summarizes the methods and regularization norms considered when using $\Gbar$ in \eqref{eq:Gbar_basisFn} as the basis function. The numerical experiments also demonstrate convergence with respect to the spatial mesh size, consistent with Theorem \ref{thm:conv}.
\begin{table}[H]    
    \centering
    \caption{ Methods and Regularization Norms}
    \begin{threeparttable}
    
    \begin{tabular}{c|cc}
        \toprule
        Methods & Norms &$\|\cdot\|_*^2$  \\
        \midrule
        L-curve (LC) & $L^2_\rho$ &  $\mathbf{c}^{\top}\bar{\mathbf{G}} \operatorname{diag}(\boldsymbol{\rho}) \bar{\mathbf{G}}^{\top}\mathbf{c}$ \tnote{*}\\
  
        Generalized Cross‑Validation (GCV) & $\ell^2$ & $\mathbf{c}^{\top} \mathbf{I} \mathbf{c}$ \tnote{*}\\

        Bilevel optimization (Bilevel) & RKHS  & $\mathbf{c}^{\top}\bar{\mathbf{G}}\mathbf{c}$\\
        \bottomrule
    \end{tabular}
    \vspace{0cm}
     \begin{tablenotes}
        \footnotesize
        \item[*] We define: $\operatorname{diag}(\boldsymbol{\rho})=\operatorname{diag}\left(\Delta r \rho\left(r_1\right), \Delta r \rho\left(r_2\right), \ldots, \Delta r \rho\left(r_M\right)\right)$
        \item[*] Here, $\mathbf{I}$ denotes the identity matrix, and $\|\cdot\|_{\ell^2}$ represents the Euclidean norm.
      \end{tablenotes}
    \end{threeparttable}
      \label{tab:method}
\end{table}

%%%%%%%%%%
\subsection{ Estimation from data consisting of PDE solutions}
\label{subSec:iterative}
We first consider the setting in which the data consists of numerical solutions of the Fokker--Planck equation at discrete times. The probability density $p^j_{i}= p(x_j,t_i)$ is generated by the finite-difference scheme
\begin{equation}
\label{eq: DFM}
\begin{aligned}
\frac{p_{i+1}^{j+1}-p_{i}^j}{\Delta t}= & -\frac{b\left(x_{j+1}\right) p_{i}^{j+1}-b\left(x_{j-1}\right) p_{i}^{j-1}}{2 \Delta \tilde{x}}+\frac{1}{2} \frac{p_{i}^{j+1}-2 p_{i}^j+p_{i}^{j-1}}{\Delta \tilde{x}^2} \\
& +\sum_{k=0}^M \phi\left(r_k\right)\left(p_{i}^{j+j_k}+p_{i}^{j-j_k}-2 p_{i}^j\right) \Delta \tilde{r},
\end{aligned}
\end{equation}
with time step $\Delta t = 0.000025$ and spatial steps $\Delta \tilde{x} = \Delta \tilde{r} = 0.005$ chosen to satisfy the CFL condition. We take $\Omega = [-5,5]$ and $R_0 = 2$, so the nonlocal term is evaluated only for $t \in [0,1]$ and $x_j \in [-3,3]$ with $0\leq j\leq M= R_0/\Delta \tilde{r}$, thereby avoiding truncation error of $p(x,t)$ in the regression. From the resulting numerical solution, we construct the dataset $\mathcal{D}$ in \eqref{eq: discretedata} on coarser meshes with $\Delta x \in \{0.01, 0.02, 0.025, 0.04\}$. In all experiments we use $N = 30$ uniformly spaced time snapshots. For the bilevel optimization, $N/2$ snapshots are used for training and $N/2$ for validation.

We consider two L\'evy measure densities and two drift functions:
\begin{itemize}
\item[-] \textbf{L\'evy Measure Density}: (a) $\phi(|y|)=e^{-|y|^2}$ (b)  $\phi(|y|) = e^{-2|y|}$
\item[-]  \textbf{Drift Function}: (a) $b(x) = -0.5x$ (b) $b(x) = \sin x$
\end{itemize}
The two light-tailed L\'evy densities with Gaussian and exponential decay reduce quadrature error when truncating to a compact domain and the globally Lipschitz drifts promote convergence of the scheme and stable numerical differentiation in the Fokker–Planck equation. In our experiments, we optimize using SGD with momentum, using  initial learning rates $\eta \in \{0.004,0.009\}$ with momentum coefficient $\iota =0.99$ for the linear-drift case and a learning rate $\eta=0.005$ with momentum coefficient $\iota=0.95$ for the nonlinear-drift case.

\begin{figure}[thb]
    \centering       \subfigure{\includegraphics[width=0.32\textwidth,height=0.25\textwidth]{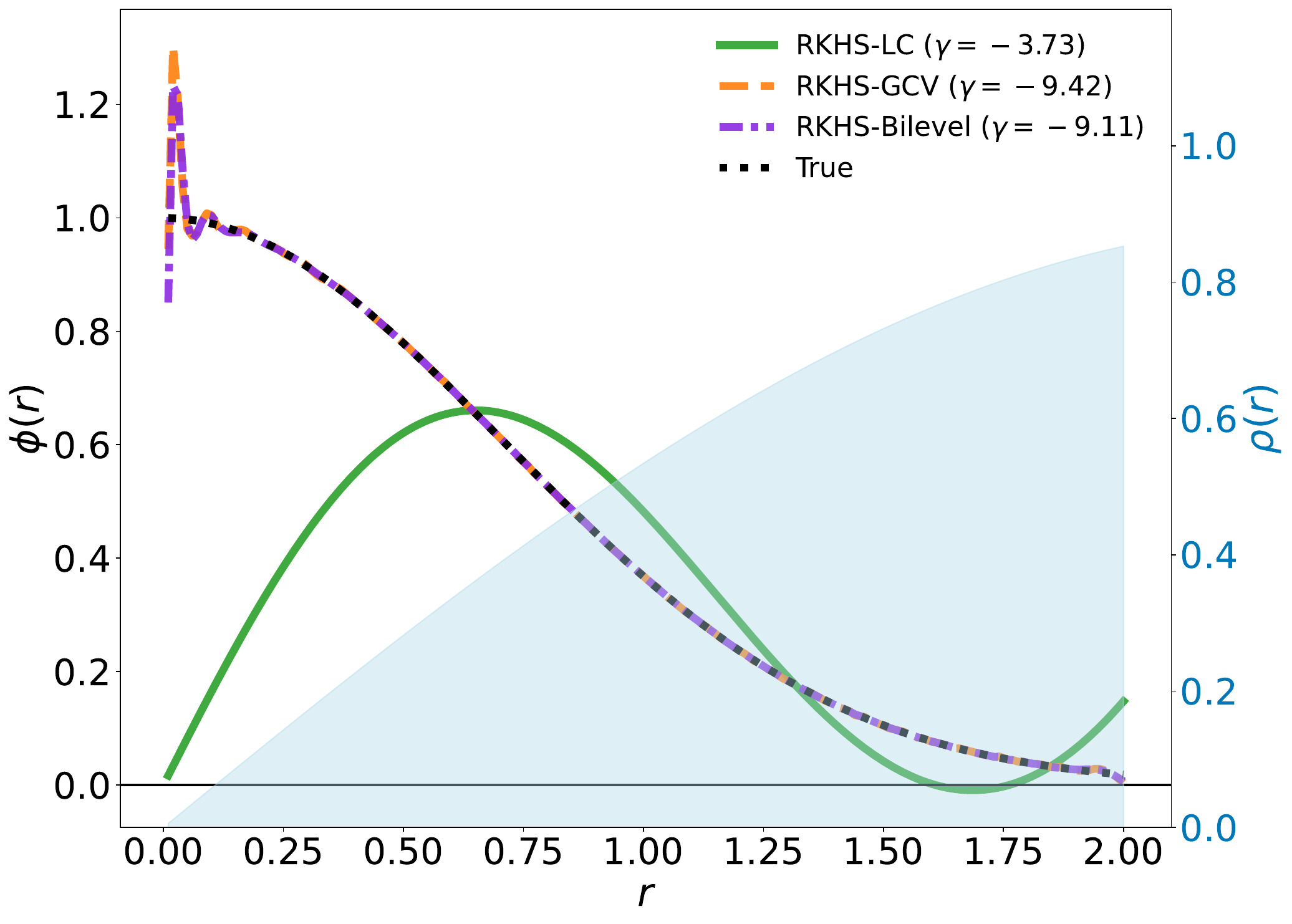}
    \label{fig:LG1}}     \subfigure{\includegraphics[width=0.32\textwidth,height=0.25\textwidth]{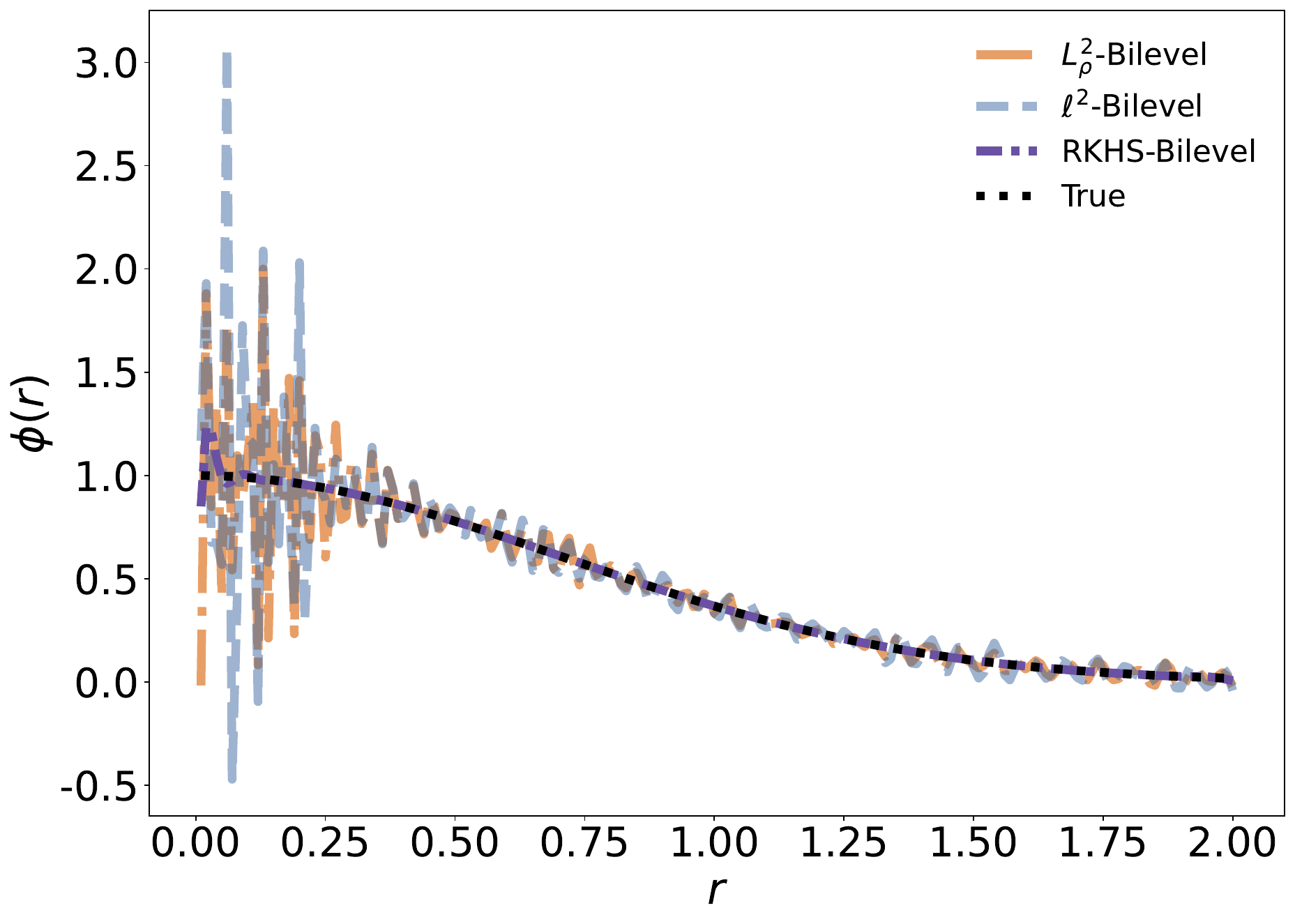}
    \label{fig:LG2}}    \subfigure{\includegraphics[width=0.32\textwidth,height=0.25\textwidth]{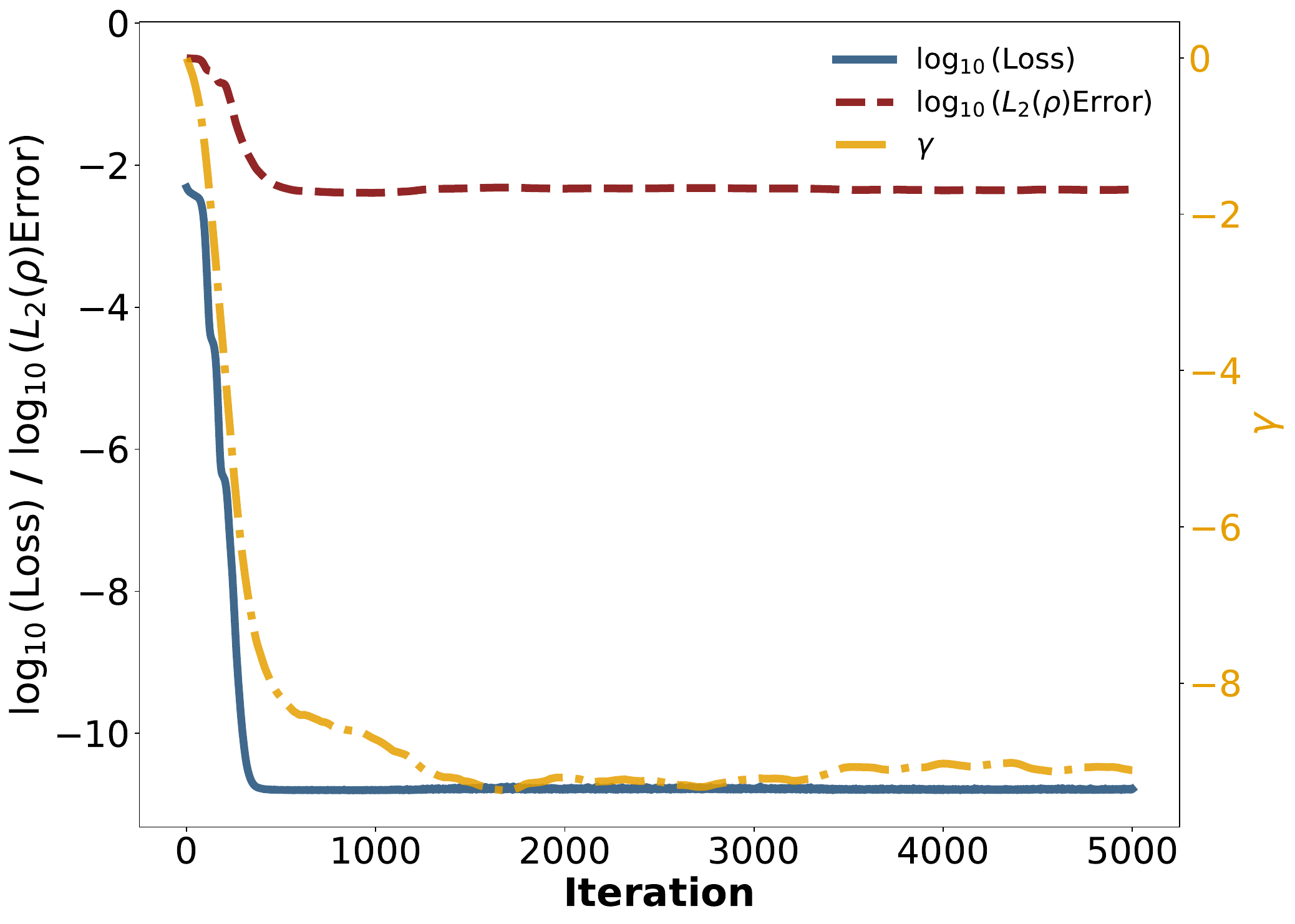}
    \label{fig:LG3}}\vspace{-2mm}
     \caption*{ (a) Linear drift: $b(x)=-0.5 x$.}
     \subfigure{\includegraphics[width=0.32\textwidth,height=0.25\textwidth]{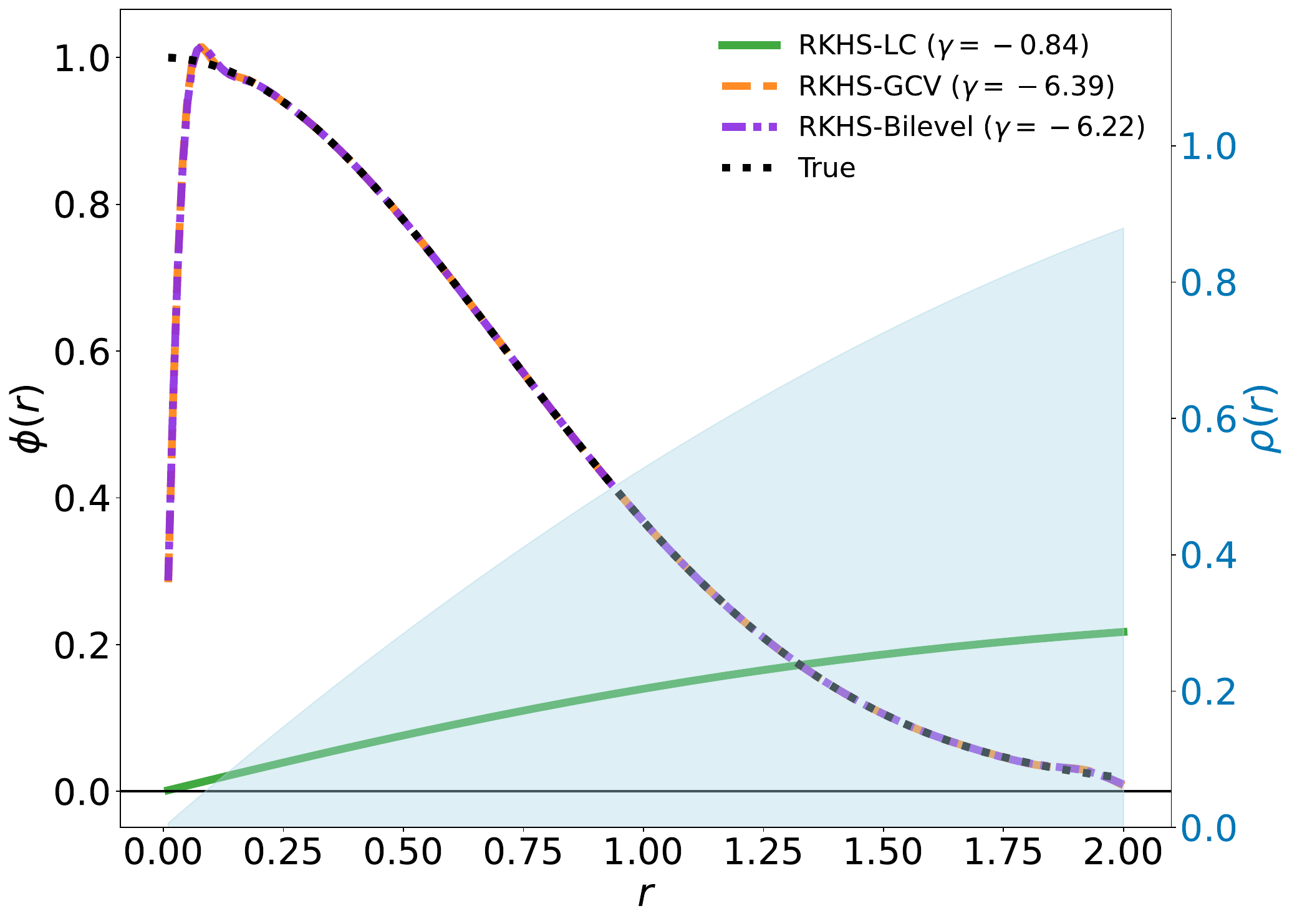}
    \label{fig:NG1}}    \subfigure{\includegraphics[width=0.32\textwidth,height=0.25\textwidth]{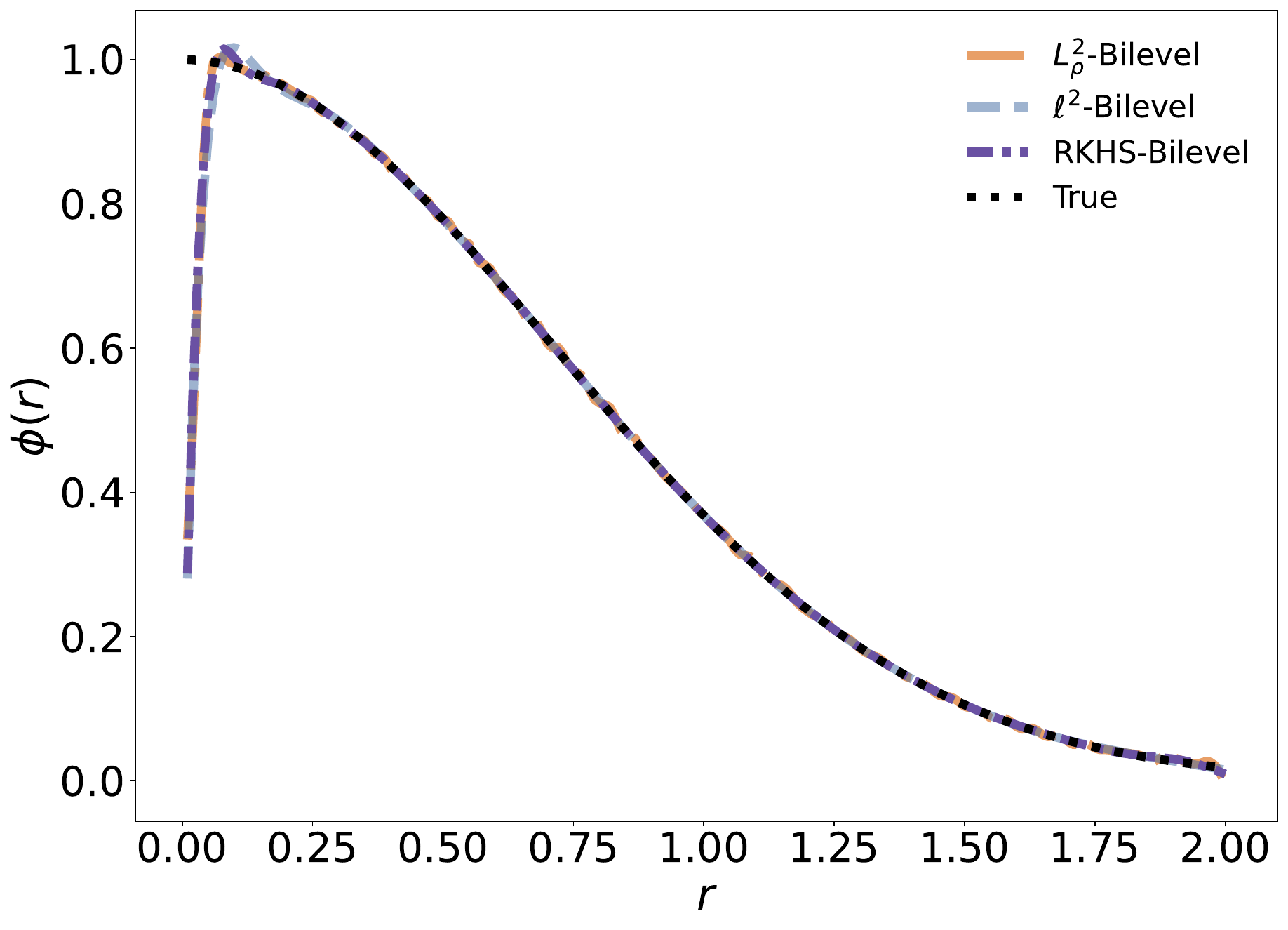}
    \label{fig:NG2}}     \subfigure{\includegraphics[width=0.32\textwidth,height=0.25\textwidth]{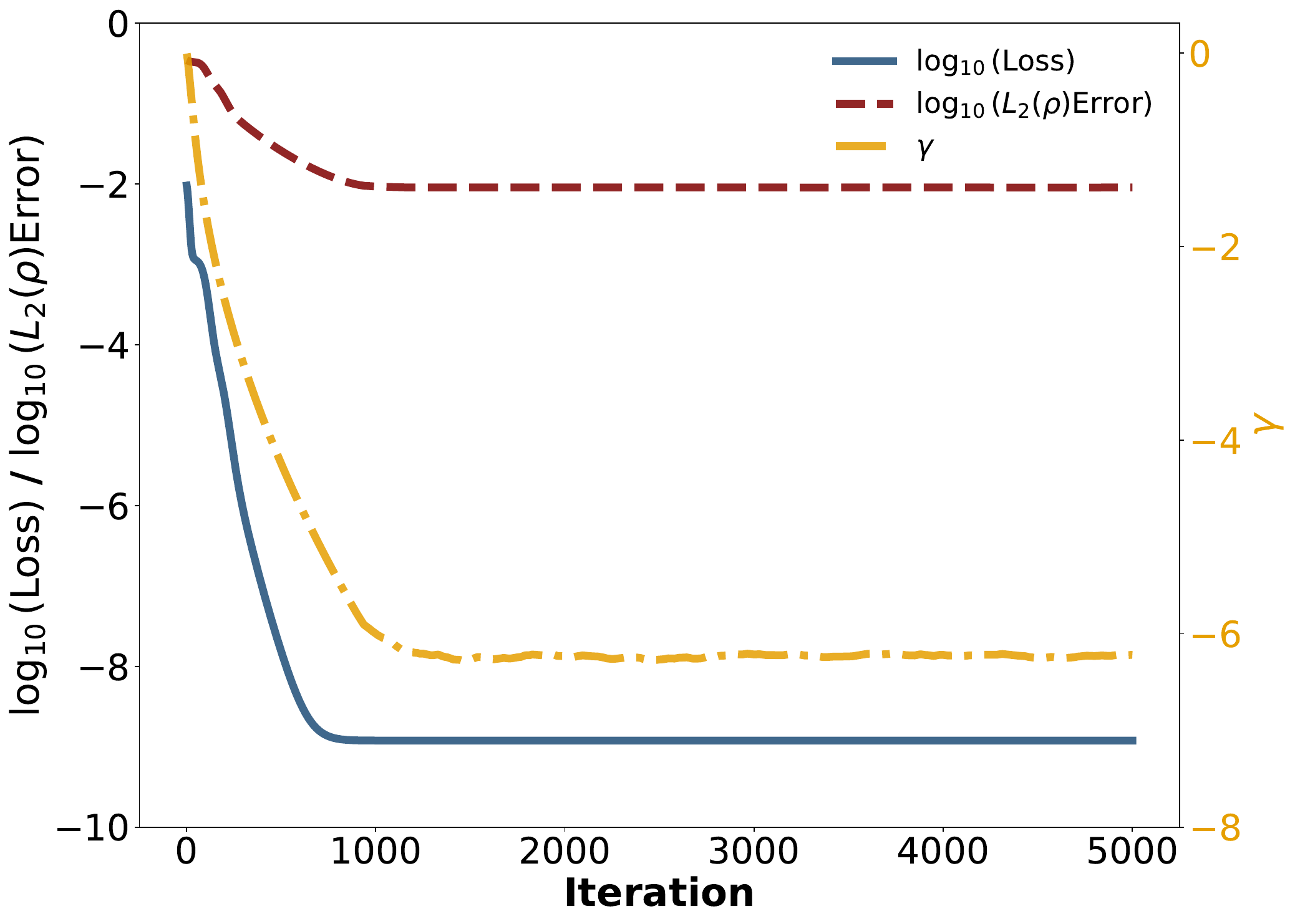} \label{fig:NG3}}\vspace{-2mm}
    \caption*{ (b) Nonlinear drift: $b(x)=\sin x $.} 
    \caption{Estimators under linear and nonlinear drift when data with mesh $\Delta x = 0.01$. Left: estimated L\'evy density $\phi(|y|)=e^{-|y|^2}$ using different methods; RKHS-Bilevel matches RKHS-GCV and outperforms RKHS-LC. The blue-shaded region indicates the exploration measure $\rho$. Middle: estimators with different regularization norms, showing RKHS norm is more stable than $L_\rho^2$ and $\ell^2$. Right: $L^2_\rho$ error, loss, and $\gamma$ versus iterations, showing convergence.
    }
    \label{fig:estimatorG}
\end{figure}

%%%%%%%%
\subsubsection{L\'evy jump measure with Gaussian decay}

Figure \ref{fig:estimatorG} and Tables \ref{tab:errorNormG}-\ref{tab:errorG} show the estimation results for the Gaussian-decay L\'evy density $\phi(|y|)=e^{-|y|^2}$ under two drifts: $b(x) = -0.5 x$ and $b(x) = \sin x$.  

Across both drift settings, Figures~\ref{fig:estimatorG} and Tables~\ref{tab:errorNormG}–\ref{tab:errorG} show that the RKHS–Bilevel estimator consistently achieves the smallest or near–smallest $L_\rho^2$ error. In the linear case $b(x)=-0.5x$, RKHS–Bilevel slightly improves on RKHS–GCV and substantially outperforms RKHS–LC, while maintaining stable accuracy as $\Delta x$ increases. In the nonlinear case $b(x)=\sin x$, RKHS–Bilevel remains accurate and robust, whereas RKHS–GCV becomes noticeably more sensitive to mesh refinement.

The middle panels of Figure~\ref{fig:estimatorG} and Table~\ref{tab:errorNormG} confirm that, within the bilevel framework, RKHS–norm regularization is more effective than either $L_{\rho}^2$ or $\ell^2$ regularization: the RKHS norm yields the lowest relative errors in both drift scenarios.

The right panels of Figure~\ref{fig:estimatorG} further indicate that, as the outer iterations proceed, the loss, the estimation error, and the hyperparameter $\gamma$ all converge to stable values, demonstrating reliable convergence of the bilevel optimization procedure.

\begin{table}[H]
    \centering
    \caption{Relative errors of estimators for $\phi(|y|)=e^{-|y|^2}$ under different regularization norms.}
    \begin{tabular}{c|ccc}
        \toprule
         Drift term & $L^2_{\rho}$-Bilevel & $\ell^2$-Bilevel & \textbf{RKHS-Bilevel}\\
        \hline
         $-0.5x$ & 0.0763 &0.1001 & \textbf{0.0045} \\
         
         $\sin x$&0.0100 &0.0097 &\textbf{0.0090} \\
        \bottomrule
    \end{tabular}
    \vspace{0cm}
    \label{tab:errorNormG}
\end{table}

\begin{table}[H]
    \centering
    \caption{Relative $L^2_\rho$ errors for $\phi(|y|)=e^{-|y|^2}$ under  $b(x)=-0.5x$ and $b(x)=\sin x$, when the data has varying mesh size $\Delta x$. The RKHS-Bilevel method consistently achieves the lowest or near-lowest errors.}
    \begin{tabular}{c c|cccc}
        \toprule
        Case & Methods & $\Delta x = 0.01$ & $\Delta x = 0.02$ & $\Delta x = 0.025$ & $\Delta x = 0.05$ \\
        \midrule
        \multirow{3}{*}{$b(x)=-0.5x$} 
            & RKHS-LC      & 0.1418 & 0.1420 & 0.1416 & 0.1412 \\
            & RKHS-GCV     & 0.0049 & \textbf{0.0083} & \textbf{0.0215} & 0.0407 \\
            & RKHS-Bilevel & \textbf{0.0045} & {0.0212} & {0.0232} & \textbf{0.0258} \\
        \midrule
        \multirow{3}{*}{$b(x)=\sin x$} 
            & RKHS-LC      & 0.3279 & 0.0178 & \textbf{0.0210} & \textbf{0.0305} \\
            & RKHS-GCV     & 0.0090 & 0.0801 & 0.0962 & 0.1431 \\
            & {RKHS-Bilevel} & \textbf{0.0090} & \textbf{0.0197} & {0.0247} & {0.0353} \\
        \bottomrule
    \end{tabular}
    \vspace{0cm}
    \label{tab:errorG}
\end{table}

\subsubsection{L\'evy jump measure with Exponential decay}
In this subsection, we estimate from data the L\'evy density $\phi(|y|)=e^{-2|y|}$ under both a linear drift $b(x)=-0.5 x$ and a nonlinear drift $b(x)=\sin x$. Figure~\ref{fig:estimatorE} shows the estimators for mesh size $\Delta x=0.01$, while Tables~\ref{tab:errorNormE} and \ref{tab:errorE} summarize the relative $L^2_\rho$ errors for different norms and mesh sizes. In both drift settings, the RKHS–Bilevel estimator is either the best or very close to the best among all methods: it clearly improves over RKHS–LC and typically matches or slightly outperforms RKHS–GCV, while remaining stable as $\Delta x$ increases. Within the bilevel framework, RKHS–norm regularization yields the smallest $L^2_\rho$ errors, outperforming both the $L^2_\rho$ and $\ell^2$ norms, which confirms that the adaptive RKHS is a more suitable function space for this inverse problem. The right column of Figure~\ref{fig:estimatorE} further shows again that, as iterations proceed, the loss, the $L^2_\rho$ error, and the hyperparameter $\gamma$ all converge to steady values, indicating reliable convergence of the bilevel optimization scheme.

\begin{figure}[H]
    \centering   
    \subfigure{\includegraphics[width=0.32\textwidth,height=0.25\textwidth]{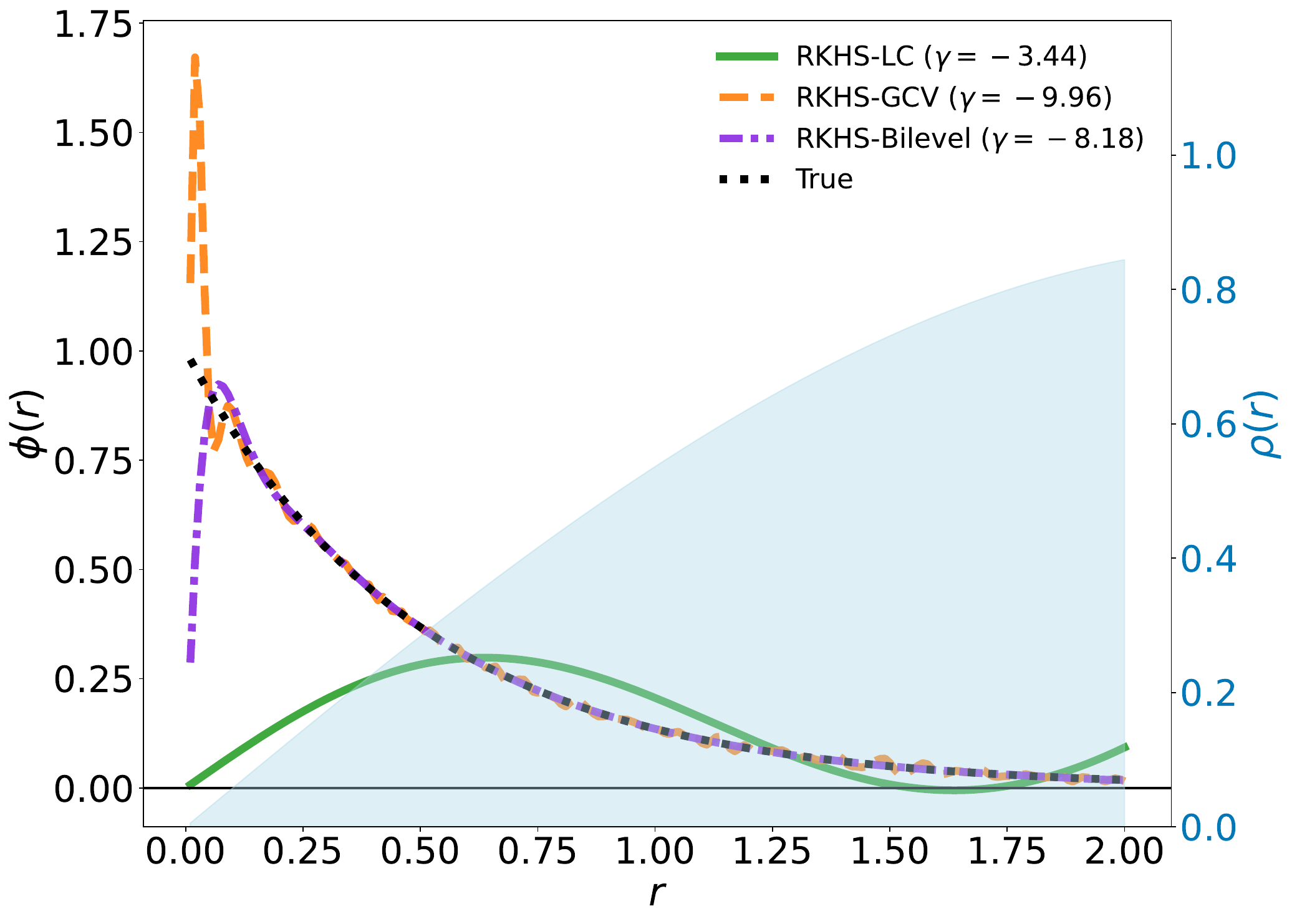}
    \label{fig:LE1}}
    % \hspace{.5in}
    \subfigure{\includegraphics[width=0.32\textwidth,height=0.25\textwidth]{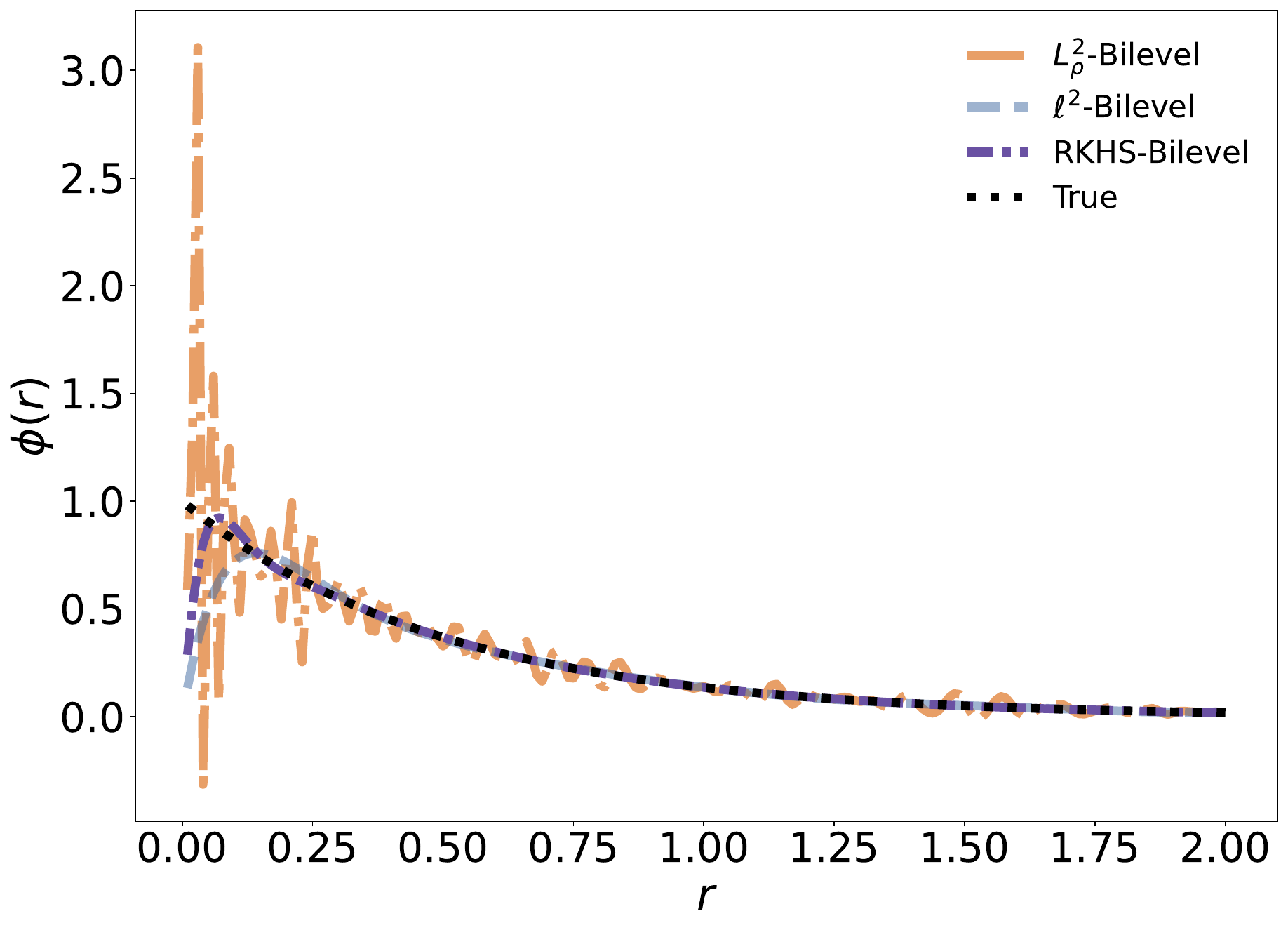}
    \label{fig:LE2}}
    \subfigure{\includegraphics[width=0.32\textwidth,height=0.25\textwidth]{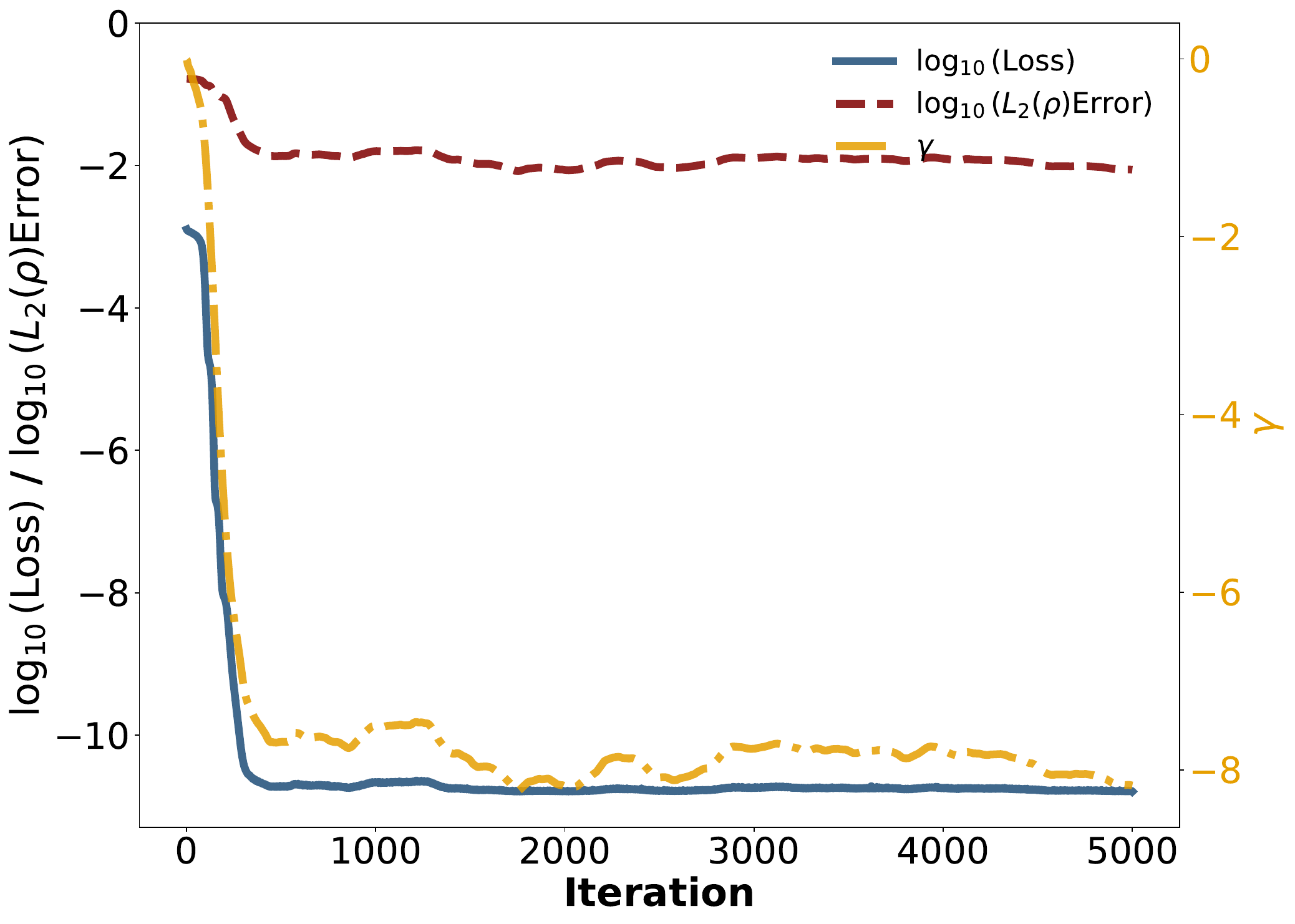}
    \label{fig:LE3}}\vspace{-3mm}
     \caption*{ (a) Linear drift: $b(x)=-0.5 x$.}
     \subfigure{\includegraphics[width=0.32\textwidth,height=0.25\textwidth]{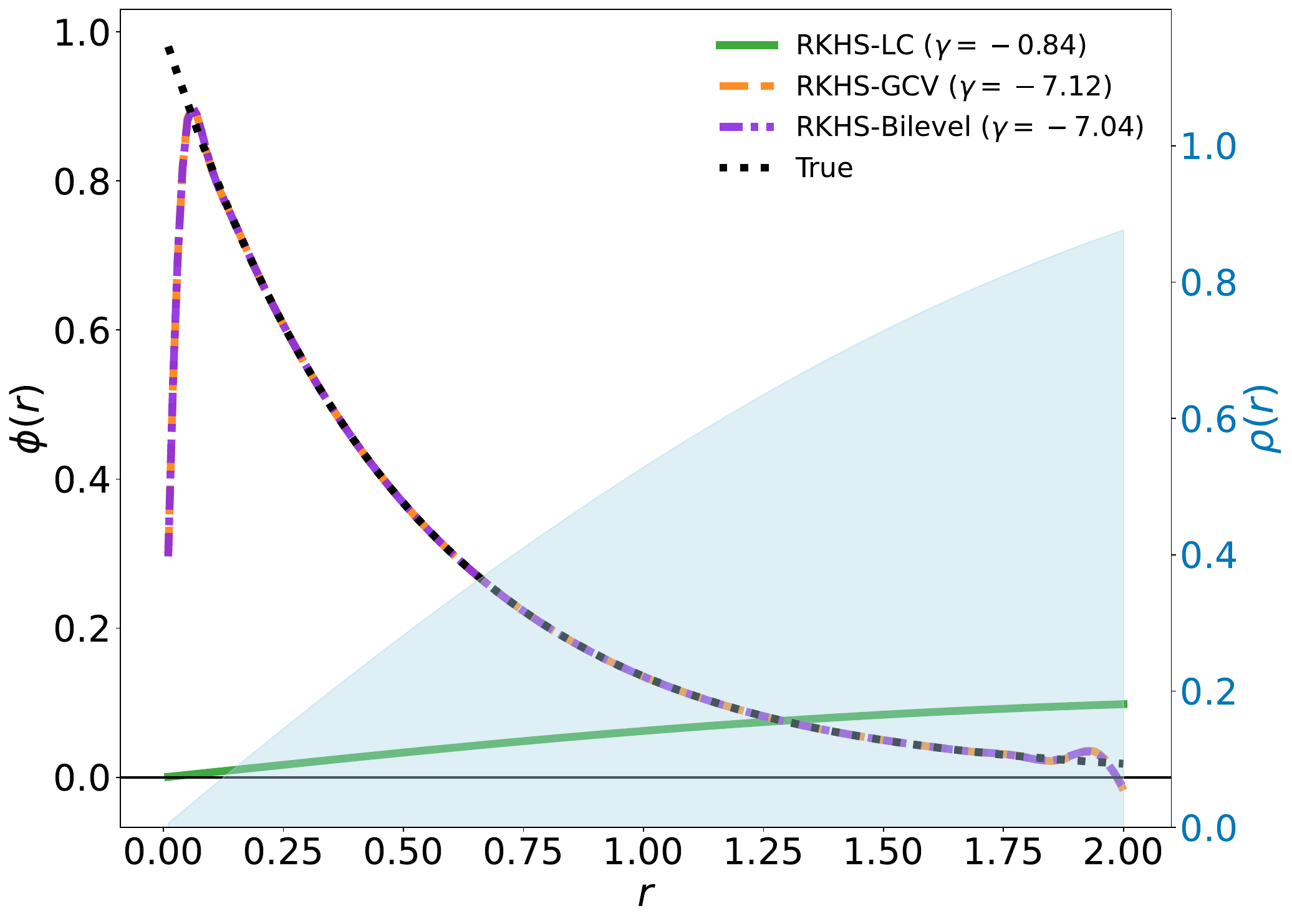}
    \label{fig:NE1}}
    % \hspace{.5in}
    \subfigure{\includegraphics[width=0.32\textwidth,height=0.25\textwidth]{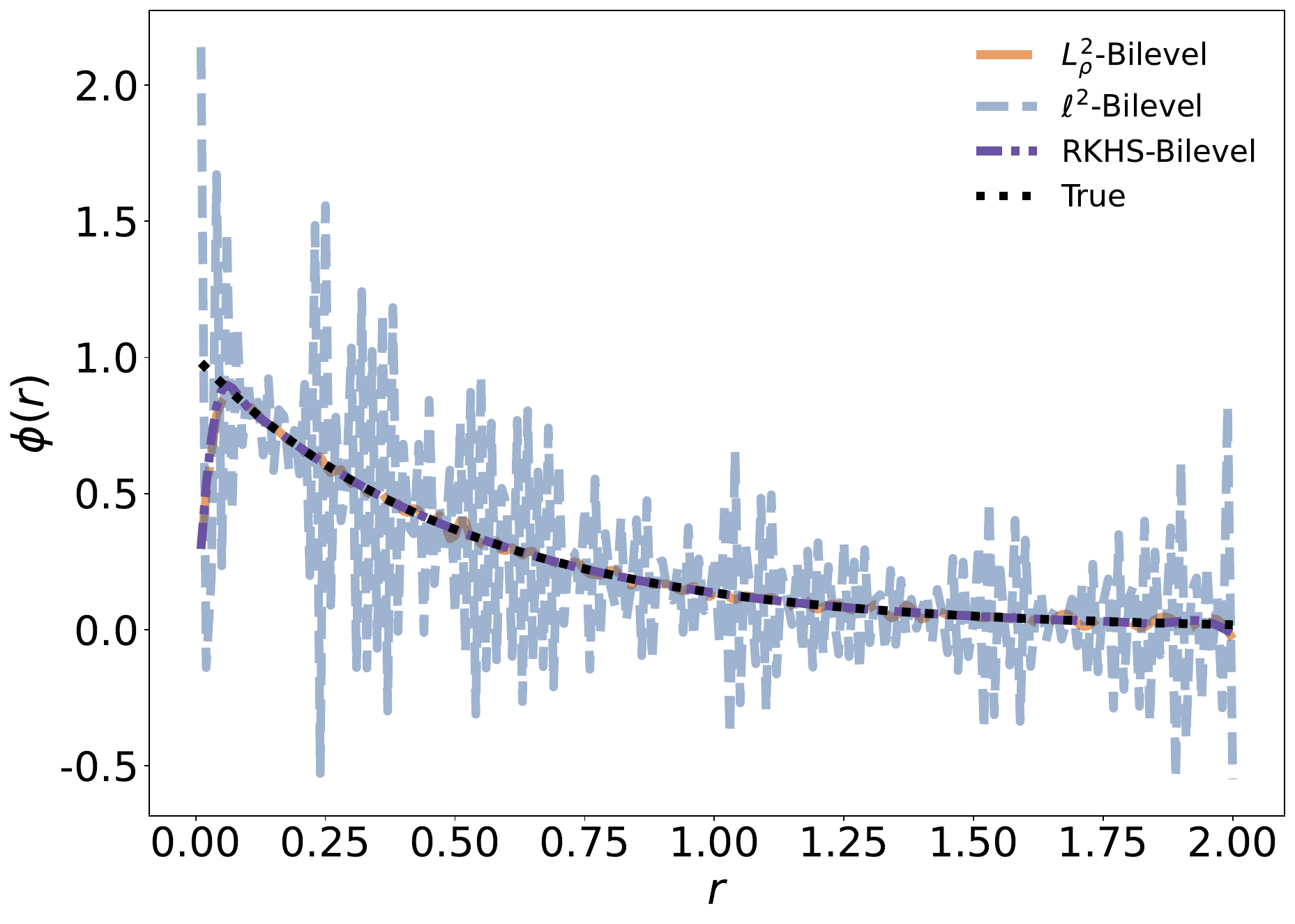}
    \label{fig:NE2}}
    \subfigure{\includegraphics[width=0.32\textwidth,height=0.25\textwidth]{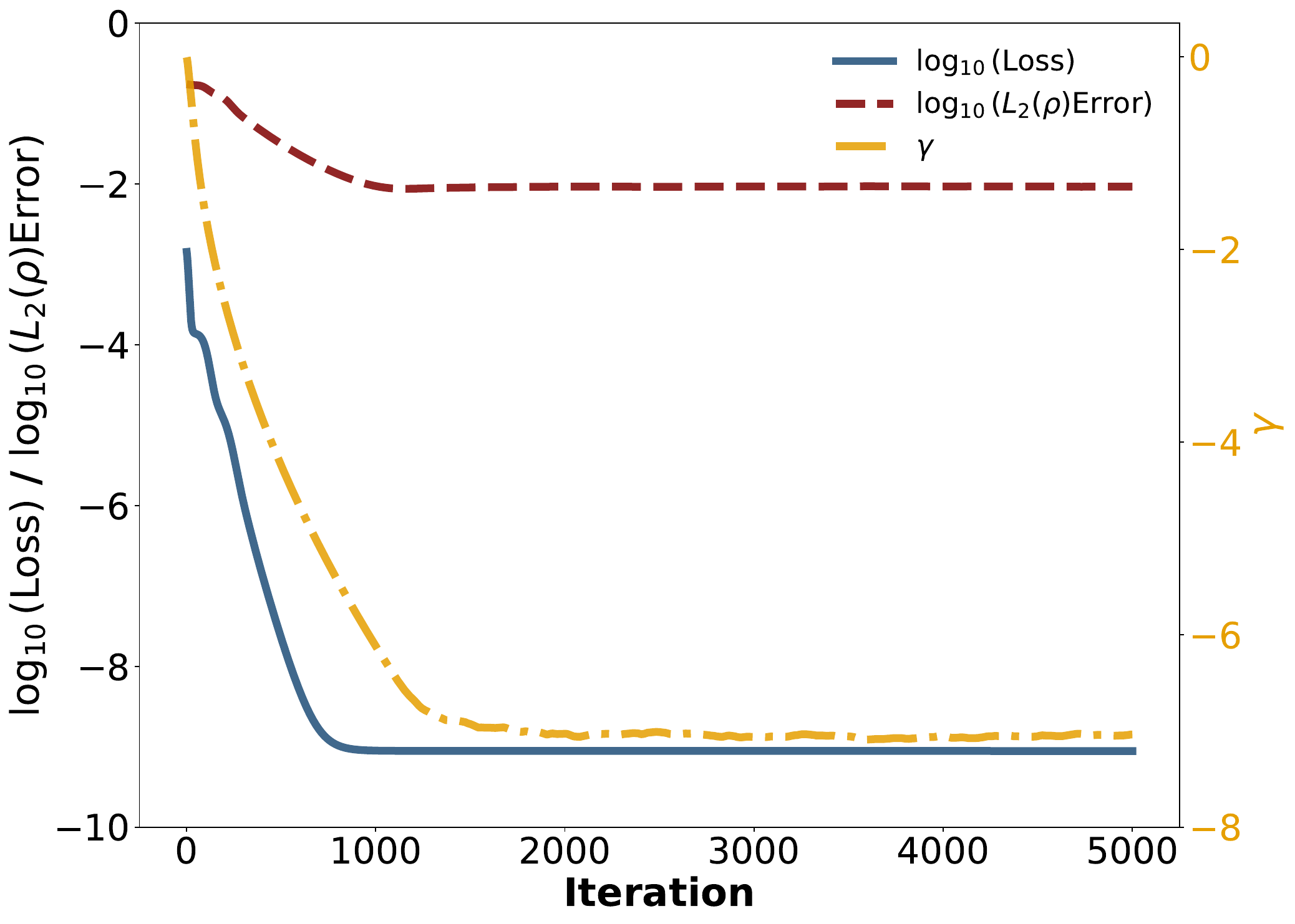}
    \label{fig:NE3}} \vspace{-3mm}
    \caption*{ (b) Nonlinear drift: $b(x)=\sin x $.}\vspace{-2mm}
    \caption{Comparison of estimators under linear and nonlinear drift. Left: estimated L\'evy density $\phi(|y|)=e^{-2|y|}$ for different methods. RKHS-Bilevel improves over RKHS–LC and matches or slightly improves upon RKHS–GCV. Blue shading: numerically computed $\rho$. Middle: effect of regularization norms, with RKHS outperforming $L_\rho^2$ and $\ell^2$. Right: $L^2_\rho$ error, loss, and hyperparameter $\gamma$ over iterations, indicating convergence.}
    \label{fig:estimatorE}
\end{figure}

\begin{table}[thb]
    \centering
    \caption{Relative errors of estimators for $\phi(|y|)=e^{-2|y|}$ under different regularization norms.  }
    \begin{tabular}{c|ccc}
        \toprule
         Drift term & $L^2_{\rho}$-Bilevel & $\ell^2$-Bilevel & \textbf{RKHS-Bilevel}\\
        \hline
         $-0.5x$ &0.0564 & 0.0196 &\textbf{0.0088} \\
         
         $\sin x$ & 0.0164 &0.2856 &\textbf{0.0093} \\
        \bottomrule
    \end{tabular}
    \vspace{0cm}
    \label{tab:errorNormE}
\end{table}

\begin{table}[thb]
    \centering
    \caption{Relative $L^2_\rho$ errors for $\phi(|y|)=e^{-2|y|}$ under  $b(x)=-0.5x$ and $b(x)=\sin x$.}
    \begin{tabular}{c c|cccc}
        \toprule
        Case & Methods & $\Delta x = 0.01$ & $\Delta x = 0.02$ & $\Delta x = 0.025$ & $\Delta x = 0.05$ \\
        \midrule
        \multirow{3}{*}{$b(x)=-0.5x$} 
            & RKHS-LC      & 0.1088 & 0.1086 & 0.1086 & 0.1082 \\
            & RKHS-GCV     & 0.0136 & 0.1271 & 0.0944 & 0.1623 \\
            & {RKHS-Bilevel} & \textbf{0.0088} & \textbf{0.0212} & \textbf{0.0220} & \textbf{0.0240} \\
        \midrule
        \multirow{3}{*}{$b(x)=\sin x$} 
            & RKHS-LC      & 0.1700 & 0.1696 & 0.1694 & 0.1688 \\
            & RKHS-GCV     & 0.0095 & 0.0645 & 0.0797 & 0.1230 \\
            & {RKHS-Bilevel} & \textbf{0.0093} & \textbf{0.0196} & \textbf{0.0248} & \textbf{0.0377} \\
        \bottomrule
    \end{tabular}
    \vspace{0cm}
    \label{tab:errorE}
\end{table}

%%%%%%%%%%%%%%%
\subsubsection{Convergence as the observation mesh size}
\label{sec:converge}

We next examine the convergence of the regularized estimators as $\Delta x$ increases; see Fig.~\ref{fig:estimatoC}. On the log–log scale, the $L^2_\rho$–error decays with an empirical slope close to $1$ in all four test cases. These rates are higher than the rate $(\Delta x)^{1/2}$ in Theorem~\ref{thm:conv}, which is proved under the worst–case source condition and only polynomial spectral decay, in the saturated regime $\beta>4$. This apparent discrepancy is not a contradiction, but reflects that the theorem gives a rate on what can be guaranteed uniformly over all admissible problems, whereas our examples are much smoother than required by the theory (i.e., $\phi^\star$ is smooth, corresponding to $\beta=\infty$). In particular, the densities $\{p_{t_i}\}$ are smooth and dependent, so the normal matrix has multiple zero eigenvalues, not satisfying the polynomial spectral decay condition. In this setting, the bilevel choice of $\lambda$ drives the solution into a regime where the dominant contribution to the error comes from discretization (finite differences in $x$ and the piecewise-constant approximation in $r$), which is first-order in $\Delta x$. Hence, the observed $L^2_\rho$–error behaves like $O(\Delta x)$, i.e., with slope close to $1$.

\begin{figure}[thb]
    \centering   
    \subfigure[$ \phi(|y|)=e^{-|y|^2}$]{\includegraphics[width=0.3\textwidth,height=0.28\textwidth]{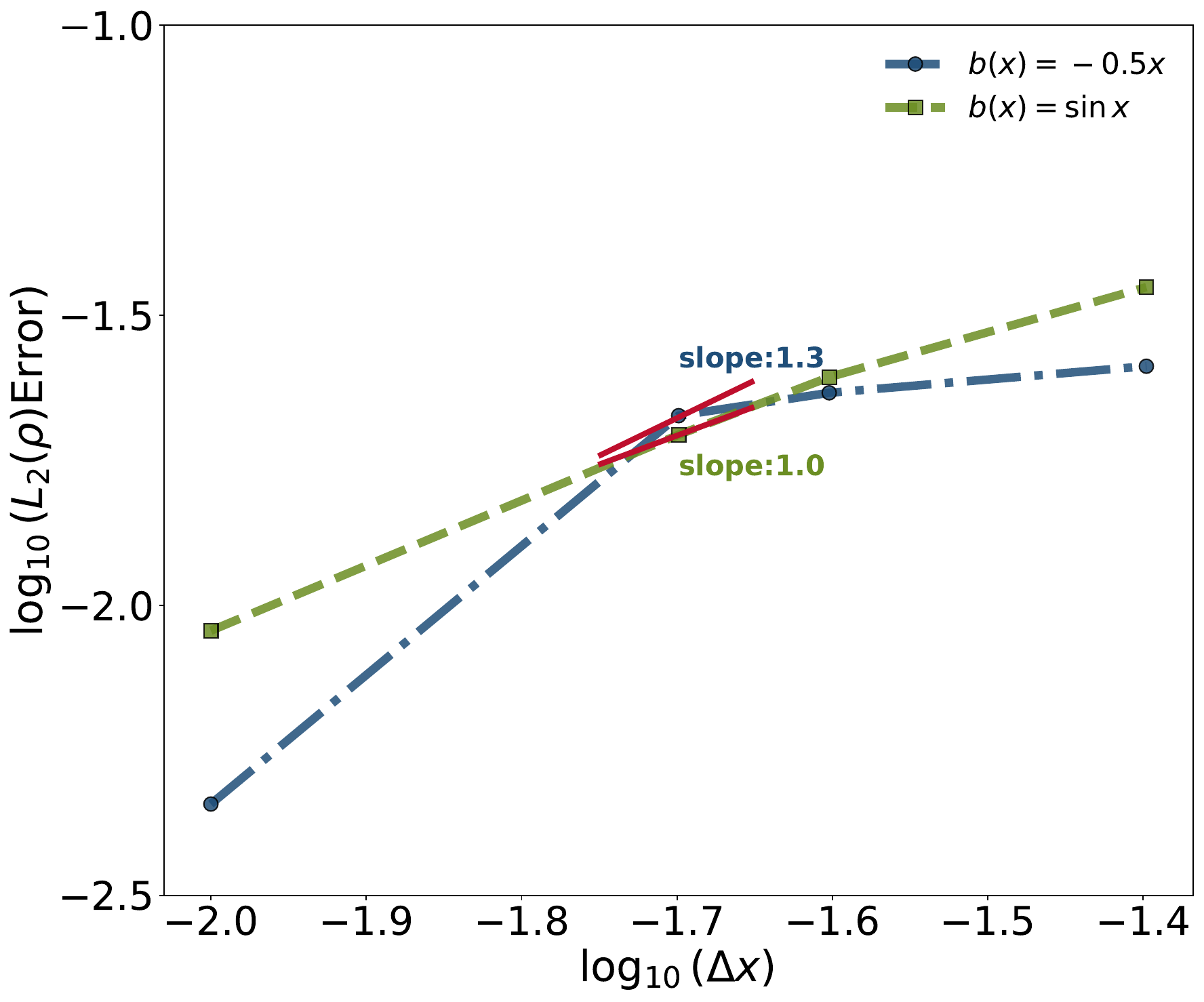}
    \label{fig:LG}
    }
    % \hspace{.5in}
    \subfigure[$\phi(|y|)=e^{-2|y|}$]{\includegraphics[width=0.3\textwidth,height=0.28\textwidth]{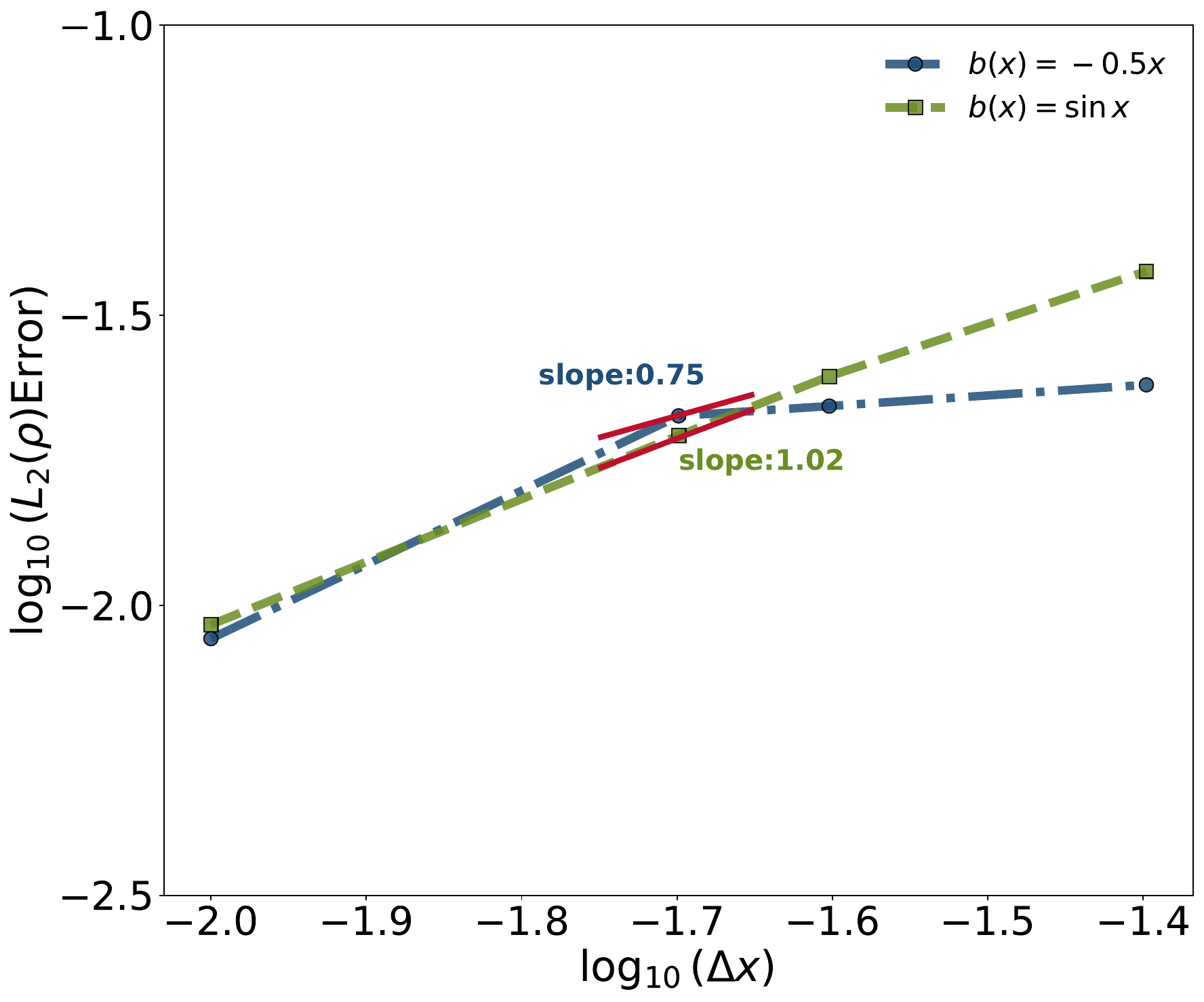}
    \label{fig:LE}
    }
    \caption{Convergence of estimator error and loss as mesh refines. % The inky blue curve plots $\log_{10}(\operatorname{Loss})$ versus $\Delta x$, and the olive green curve plots $\log_{10} (L^2_\rho  \operatorname{Error})$ versus $\Delta x$. Both decrease as $\Delta x$ decreases.
    The $L^2_\rho$ error decays approximately like $O(\Delta x)$ (slope close to $1$), faster than the worst-case theoretical rate, due to high smoothness of data in the examples. 
    }
    \label{fig:estimatoC}
\end{figure}

%%%%%%%%%%%%%%%
\subsection{L\'evy density estimation from a sequence of ensembles}

Another practical scenario is to estimate the L\'evy density $\phi$ directly from a sequence of sample ensembles of the underlying SDE, without access to the density data. In this setting, we first estimate the densities $p_{t_i}$ from the ensemble data using kernel density estimation (KDE) and then use the estimated densities to construct the dataset $\mathcal{D}$ in \eqref{eq: discretedata}.

Following \cite{comte2009nonparametric}, we consider the simplest setting of estimating the L\'evy density $\phi$ of the L\'evy process $L_t$: 
\begin{equation}
\label{eq: SDEexampleone}
d X_t=d L_t,
\end{equation}
from data consisting of sequence of ensembles $\left\{\mathcal{S}_i\right\}_{i=1}^N$ with $\mathcal{S}_i=\left\{X_{t_i}^{(j)}\right\}_{j=1}^J$, where $X_{t_i}^{(j)}$ is the $j$-th sample at time $t_i$. 

We generate the data using the Euler–Maruyama scheme. A key step is to compute the increment $\Delta L_{t_i}=L_{t_{i+1}}-L_{t_i}$, where the driving L\'evy process is taken to be compound Poisson. By \cite{protter1997euler}[Theorem 3.1], we simulate these increments in two steps: (i) compute the jump arrival rate $\lambda= \int_{\mathbb{R} \backslash\{0\}} \nu(d y)$, and the distribution of jump $\frac{\nu(d y)}{\lambda}$; (ii) simulate the compound Poisson process $L_t$ of the form $L_t :=\sum_{i=1}^{\infty} V^i {\mathrm {1}}_{\left[t \geq T_i\right]},$ where $T_i$ are Poisson arrival times of intensity $\lambda$ and $V_i$ are i.i.d. with law $\mu:=(1 / \lambda) \nu$.

\paragraph{Numerical settings:} To obtain discrete observations $\left\{\mathcal{S}_i\right\}_{i=1}^N$, we simulate $J= 10^6$ trajectories over $t \in[0,5]$ with time step $\Delta t=0.05$ via the explicit Euler scheme. The L\'evy increments are constructed in accordance with the two types of L\'evy densities introduced in Subsection \ref{subSec:iterative}:
\begin{itemize}
\item[(a)] \textbf{Gaussian decay:} $\phi(|y|)=e^{-|y|^2}$; $\lambda = \sqrt{\pi}$; $\mu(y) = \frac{e^{-|y|^2}}{\sqrt{\pi}} $; $V_i \sim \mathcal{N}(0, \frac{1}{2})$
\item[(b)] \textbf{Exponential decay:} $\phi(|y|) = e^{-2|y|}$; $\lambda = 1$; $\mu (y)= e^{-2|y|}$; $V_i \sim \text { Laplace }(0, \frac{1}{2})$.
\end{itemize}

From the ensemble data, we estimate the density $p\left(\cdot, t_i\right)$ at each time point $t_i$ using KDE. For each $t_i$, we evaluate the KDE on a uniform grid $x_k \in[-5,5]$ with spacing $\Delta x=0.01$. To stabilize numerical differentiation, we smooth the KDE values with a Savitzky-Golay (SG) filter \cite{savitzky1964smoothing} and then apply finite differences to the smoothed PDF, thereby constructing dataset $\mathcal{D}$ in \eqref{eq: discretedata}. Unless otherwise specified, we use a KDE bandwidth $h=0.105$, corresponding to $c=0.5$ in the rule $h=c\left(J \Delta t^2\right)^{-1 / 5}$.

For the bilevel optimization, we fix $\eta=0.005$ and $\iota=0.85$ in our algorithm.

Figure \ref{fig:kdgestimatorP} summarizes the estimators obtained from ensemble data for both L\'evy densities, $\phi(|y|)=e^{-|y|^2}$ and $\phi(|y|)=e^{-2|y|}$. In each case, the RKHS–Bilevel method produces the visually best fit, while RKHS–LC is slightly less accurate and RKHS–GCV can become unstable (in particular for the exponential-decay density, where the GCV curve is highly oscillatory). The right panels show that, for both densities, the loss, the $L^2_\rho$ error and the hyperparameter $\gamma$ converge as the outer iterations proceed.

Table \ref{tab:KDEerrorNormMerged} reports the corresponding $L^2_\rho$ errors. They confirm that RKHS–Bilevel attains the smallest error among the three hyperparameter–selection methods, and also compare bilevel optimization under different regularization norms. Across both densities, RKHS–norm regularization clearly outperforms $L^2_\rho$–Bilevel (which is very unstable) and $\ell^2$–Bilevel (which yields larger errors), reinforcing the advantage of the adaptive RKHS norm in this setting.

\begin{figure}[H]
    \centering   
    \subfigure[]{\includegraphics[width=0.38\textwidth,height=0.28\textwidth]{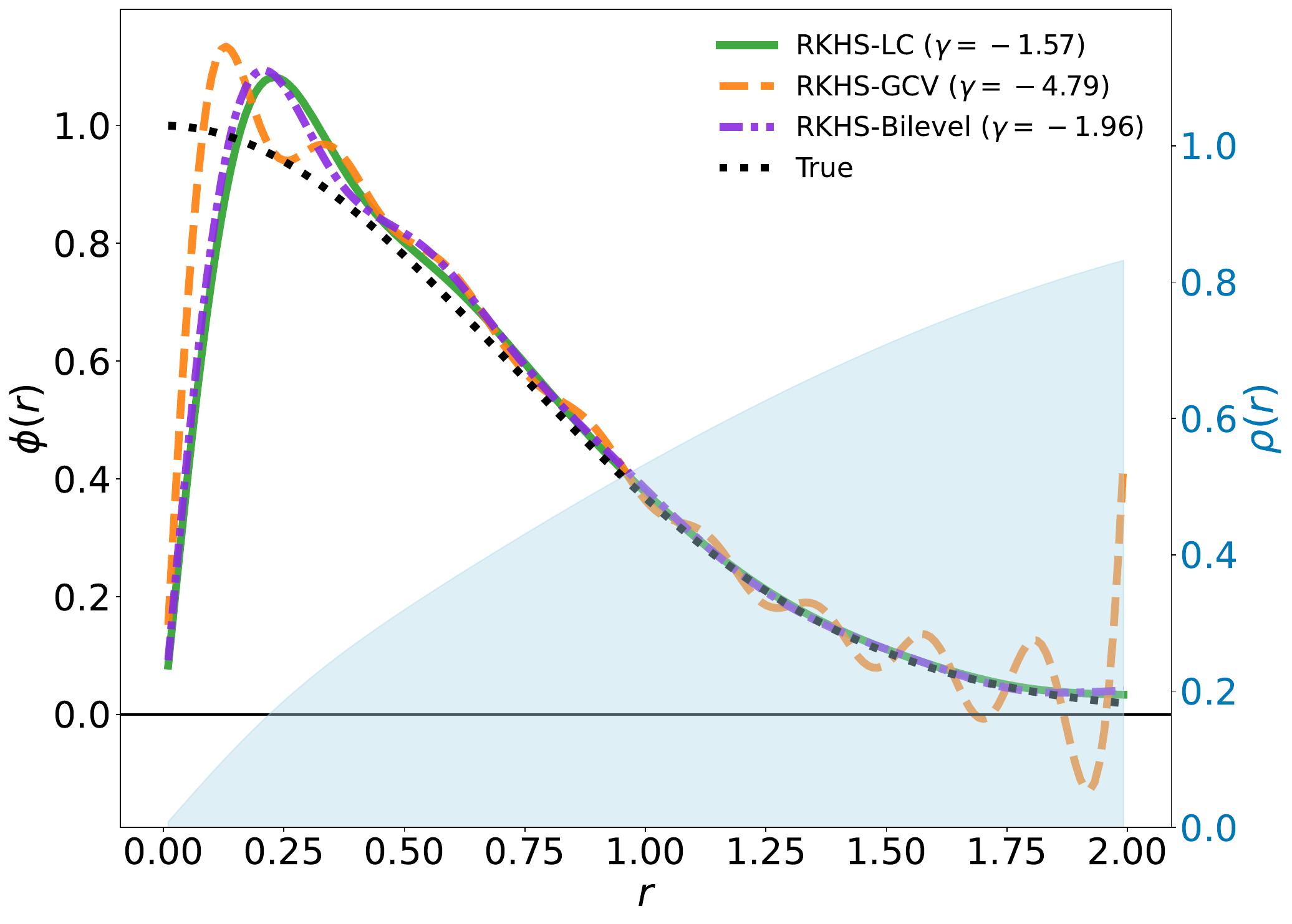}
    \label{fig:KDEP1}}
    \subfigure[]{\includegraphics[width=0.38\textwidth,height=0.28\textwidth]{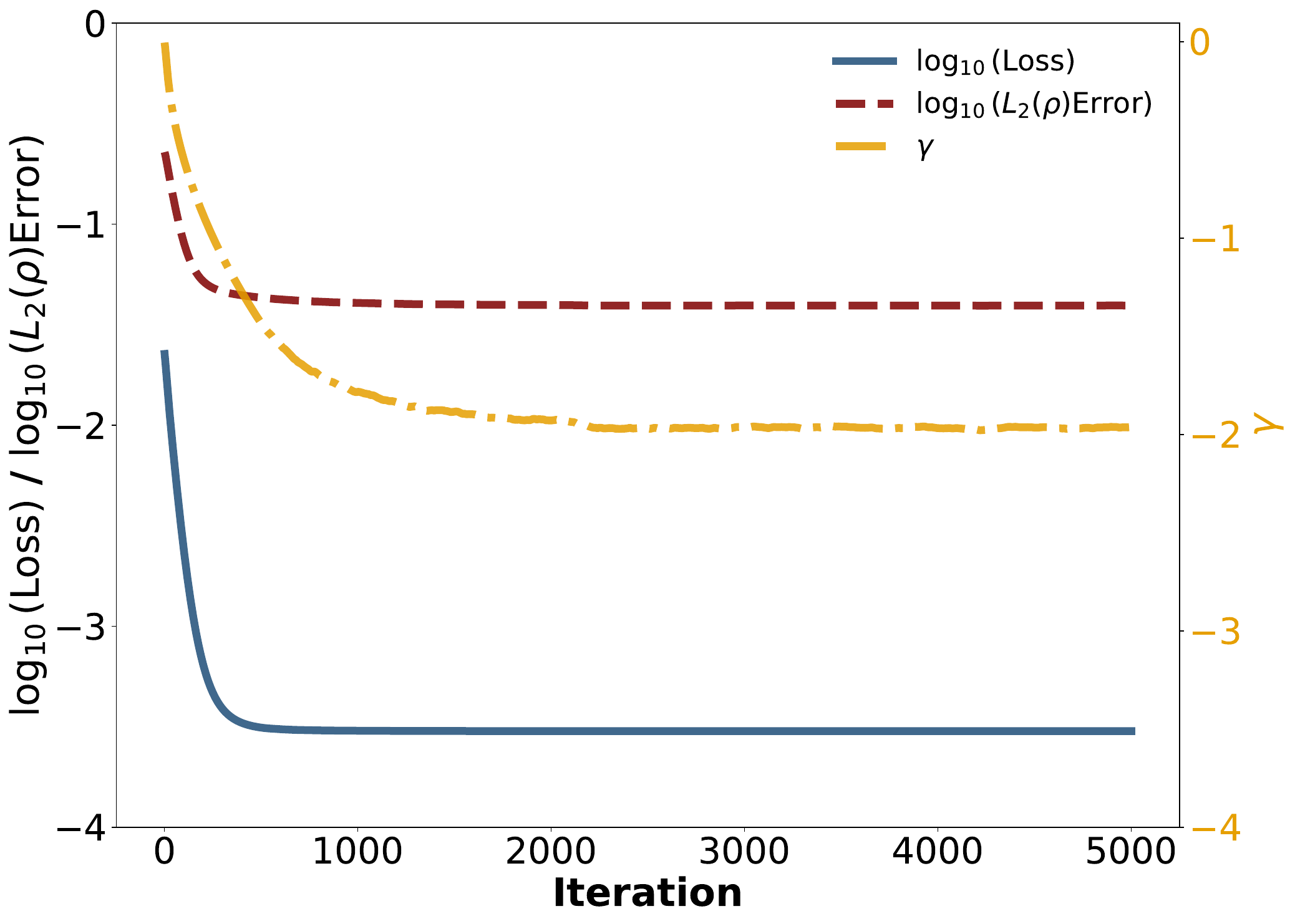}
    \label{fig:KDEP2}}\\
        \subfigure[]{\includegraphics[width=0.38\textwidth,height=0.28\textwidth]{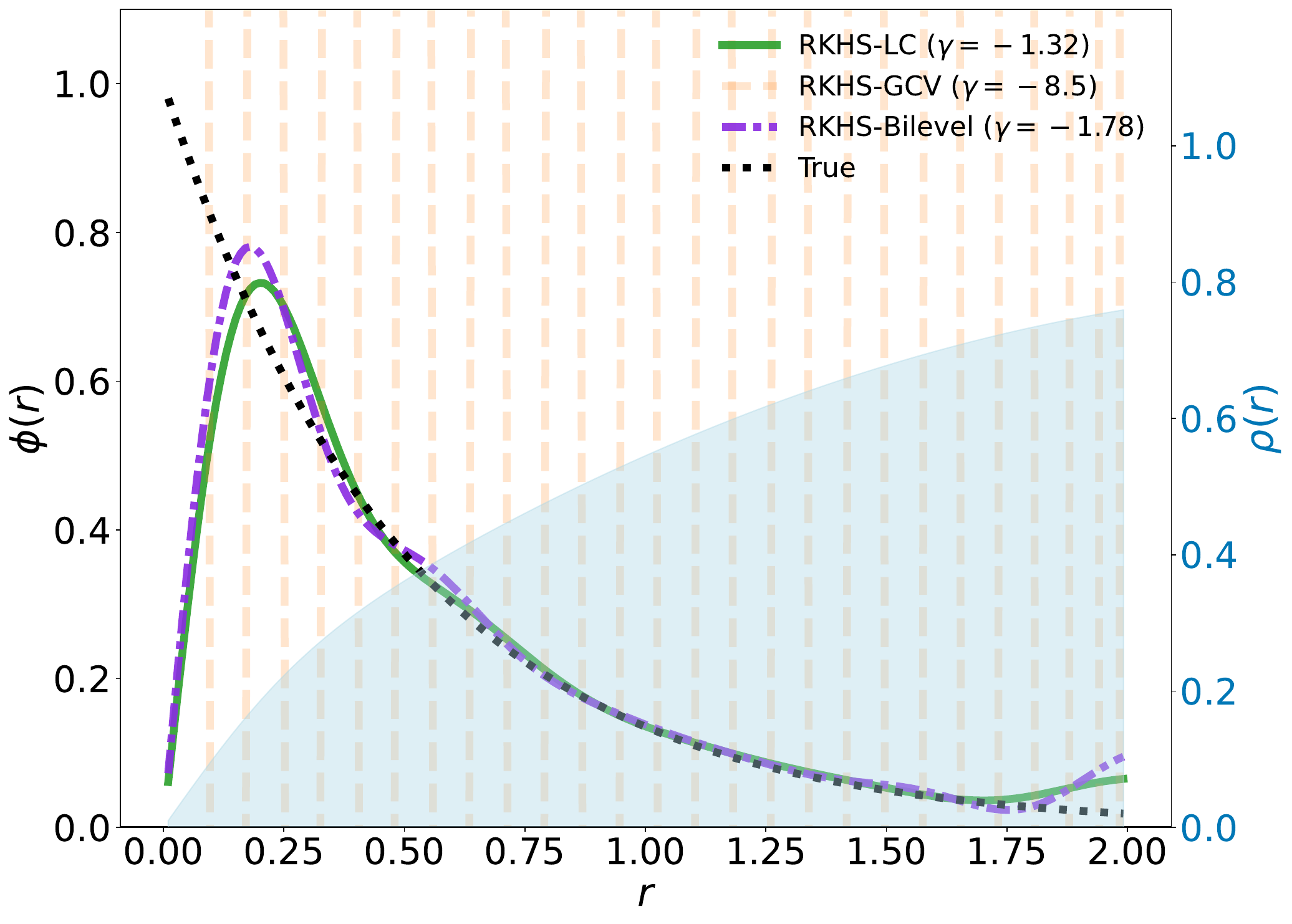}
    \label{fig:KDEP3}}
    \subfigure[]{\includegraphics[width=0.38\textwidth,height=0.28\textwidth]{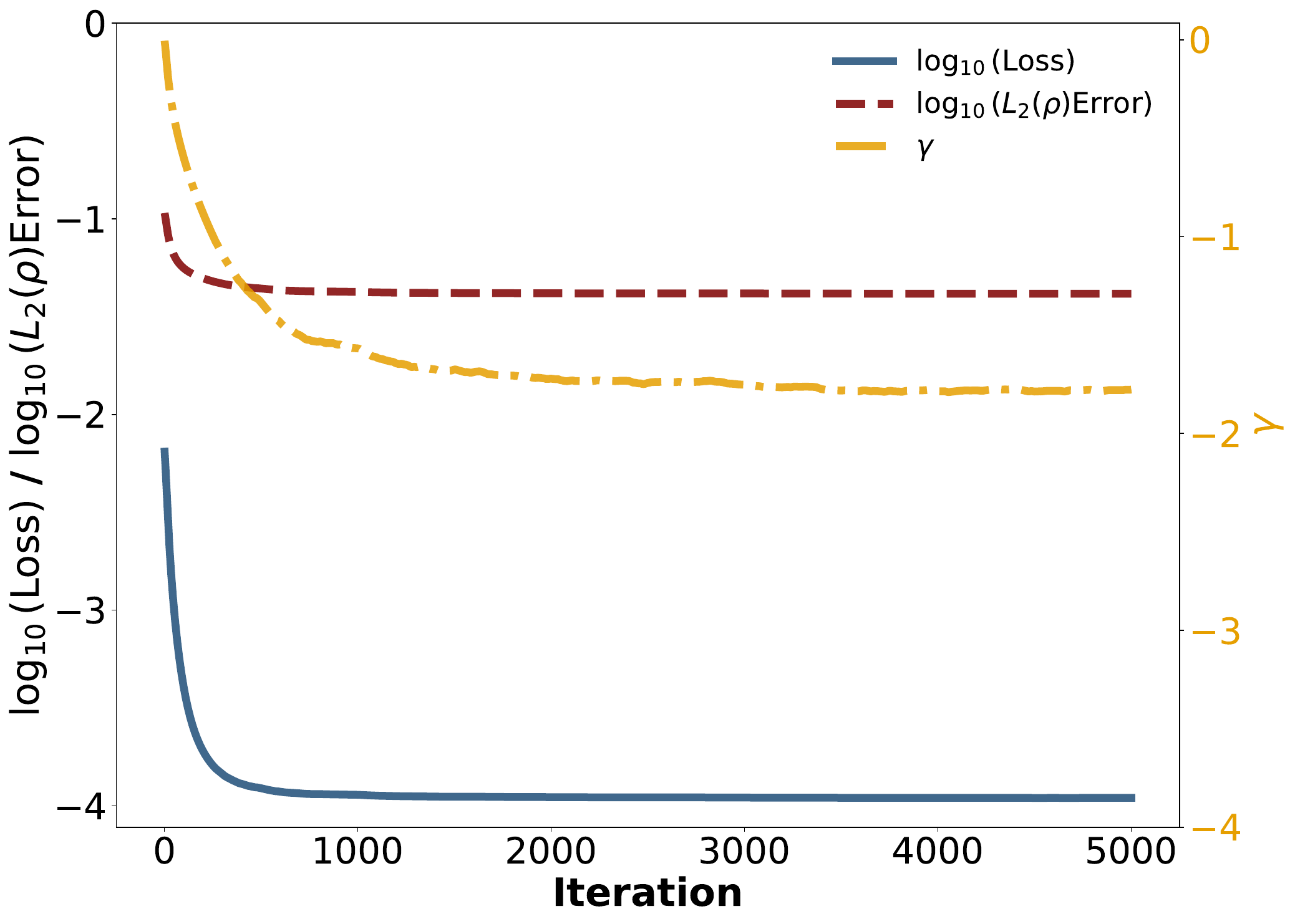}
    \label{fig:KDEP4}}
    \caption{Estimators from ensemble data. (a) \& (c): Regularized estimators with hyperparameter selected by L-curve, GCV and bi-level methods. RKHS-Bilevel outperforms RKHS–GCV and slightly improves upon RKHS–LC. (b)\& (d): Decay of the $L_\rho^2$ error, loss, and regularization hyperparameter $\gamma$ over iterations, illustrating the convergence behavior of RKHS-Bilevel.}
    \label{fig:kdgestimatorP}
\end{figure}

\begin{table}[H]
    \centering
    \caption{Relative $L^2_\rho$ errors across methods and norms for two L\'evy densities}
    \label{tab:KDEerrorNormMerged}
    \begin{tabular}{l l c l c}
        \toprule
        $\phi$ & \multicolumn{2}{c}{Different methods} & \multicolumn{2}{c}{Different norms} \\
        \cmidrule(r){2-3}\cmidrule(l){4-5}
                     & Setting & Error & Setting & Error \\
        \midrule
        \multirow{3}{*}{$e^{-|y|^2}$}
            & RKHS--LC          & 0.0421  & $L^2_{\rho}$--Bilevel & 5.3434 \\
            & RKHS--GCV         & 0.0687  & $\ell^2$--Bilevel     & 0.0460 \\
            & \textbf{RKHS--Bilevel} & \textbf{0.0394} & & \\
        \midrule
        \multirow{3}{*}{$e^{-2|y|}$}
            & RKHS--LC          & 0.0442  & $L^2_{\rho}$--Bilevel & 55.2078 \\
            & RKHS--GCV         & 11.9383 & $\ell^2$--Bilevel     & 0.0479 \\
            & \textbf{RKHS--Bilevel} & \textbf{0.0414} & & \\
        \bottomrule
    \end{tabular}
\end{table}

Figure~\ref{fig:cbandwidth} illustrates the impact of the KDE bandwidth $h$ (parameterized by $c$ in the formula $h=c(J\Delta t^2)^{-1/5}$) on both the estimated PDFs and the recovered L\'evy densities. In panels (a) and (c), smaller bandwidths yield less-smoothed, more oscillatory KDEs, while larger bandwidths produce smoother but more biased density estimates, especially near the origin. Panels (b) and (d) show the corresponding L\'evy density estimates: when $h$ is too small the estimates inherit strong oscillations from the noisy PDFs, whereas for overly large $h$ the estimates become overly smooth and lose accuracy. These results highlight the crucial role of choosing an appropriate bandwidth: it must be small enough to preserve the structure of $p_t$ for accurate recovery of $\phi$, yet large enough to stabilize numerical differentiation and suppress spurious oscillations.

\begin{figure}[htb]
    \centering   
    \subfigure[]{\includegraphics[width=0.38\textwidth,height=0.28\textwidth]{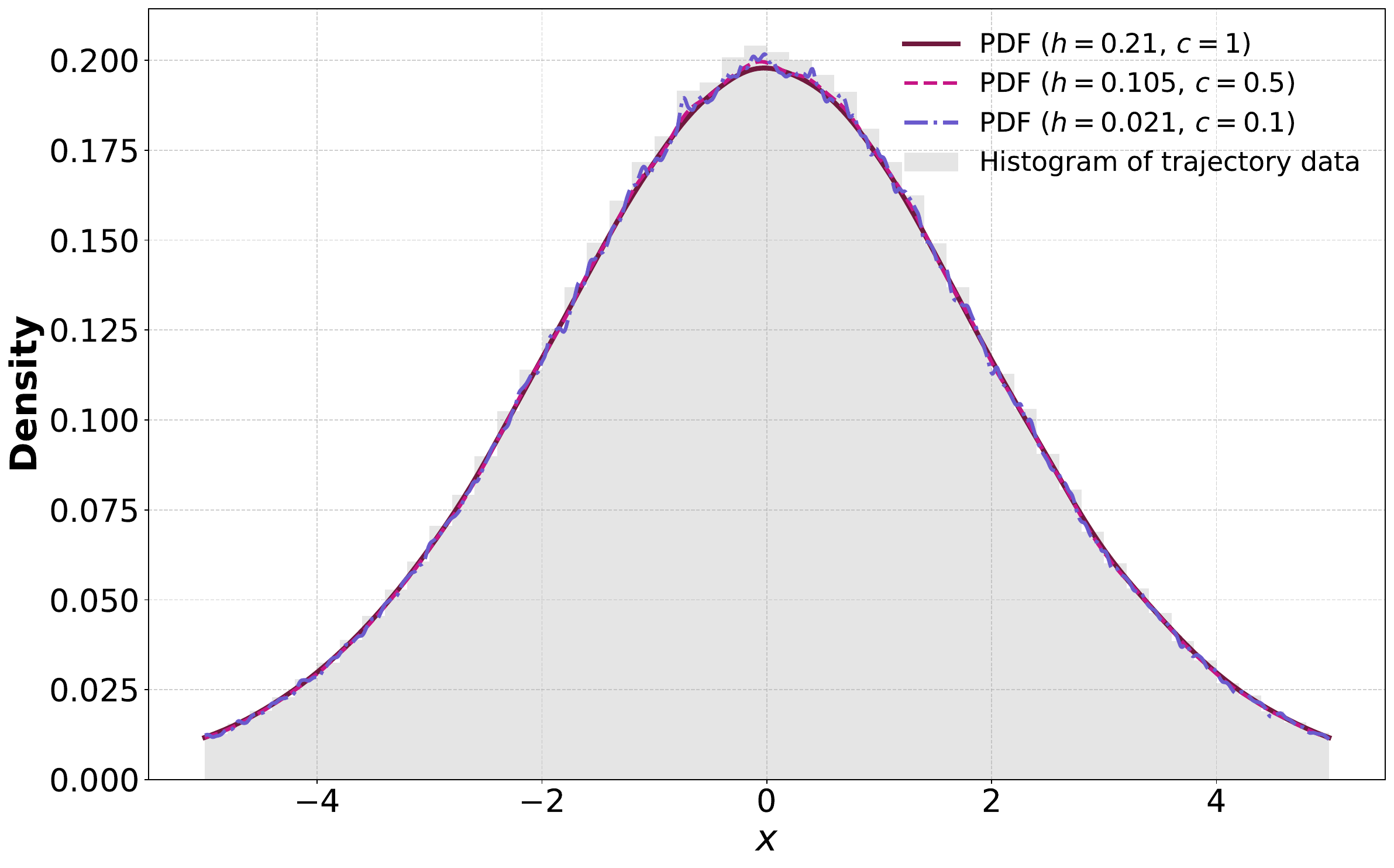}
    \label{fig:KDEpdf1}}
    \subfigure[]{\includegraphics[width=0.38\textwidth,height=0.28\textwidth]{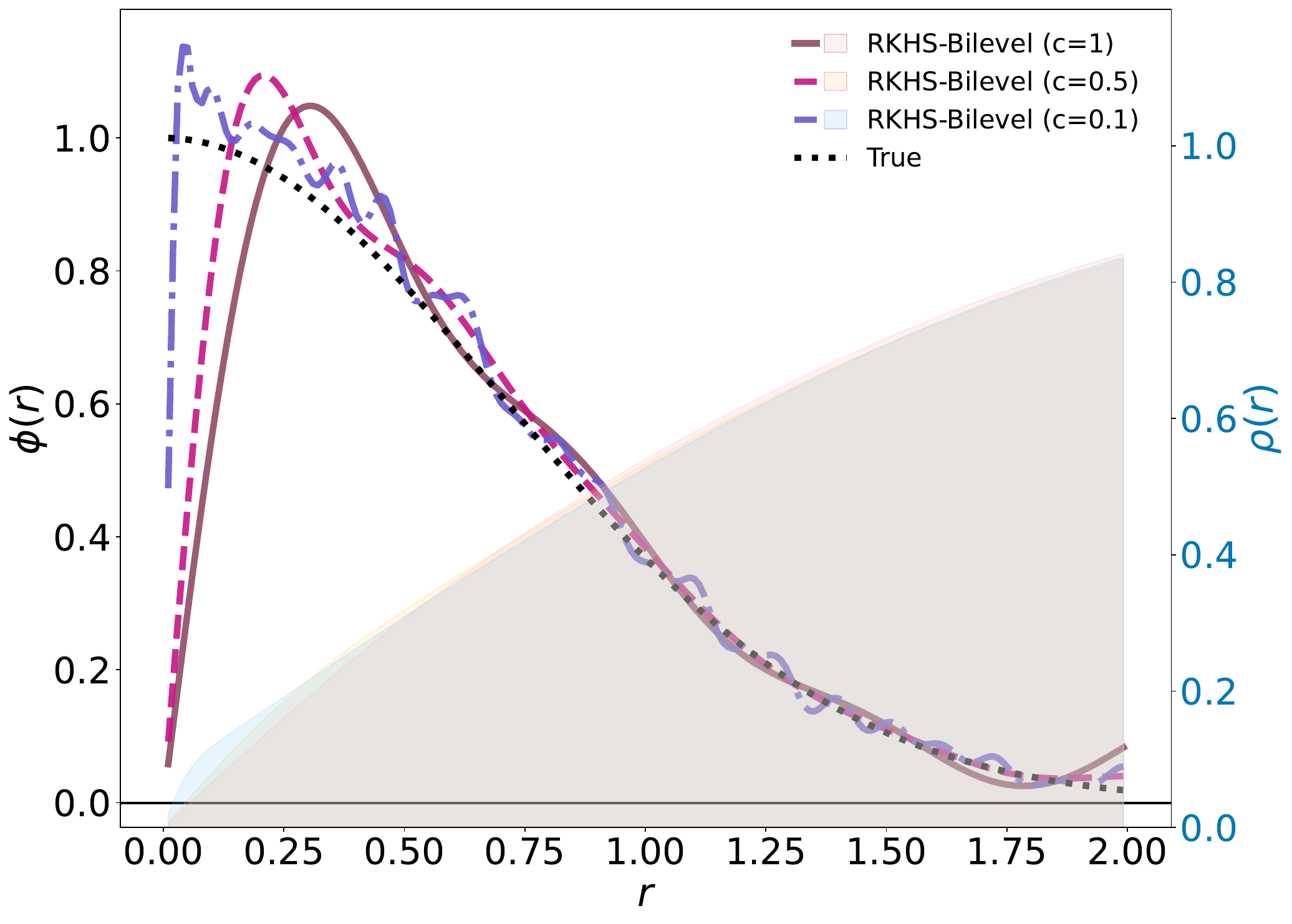}
    \label{fig:KDEband2}}
     \subfigure[]{\includegraphics[width=0.38\textwidth,height=0.28\textwidth]{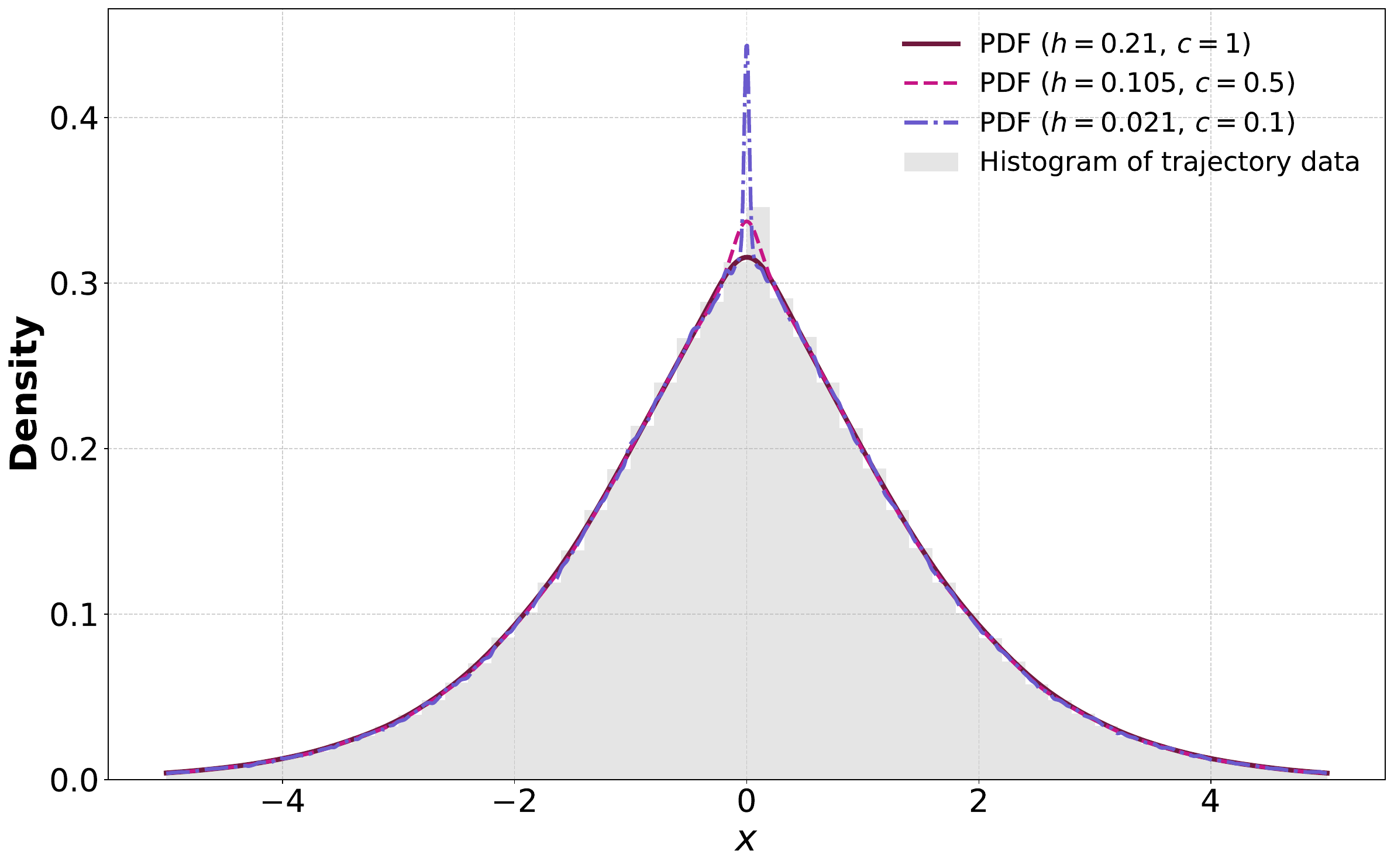}
    \label{fig:KDEpdf3}}
    \subfigure[]{\includegraphics[width=0.38\textwidth,height=0.28\textwidth]{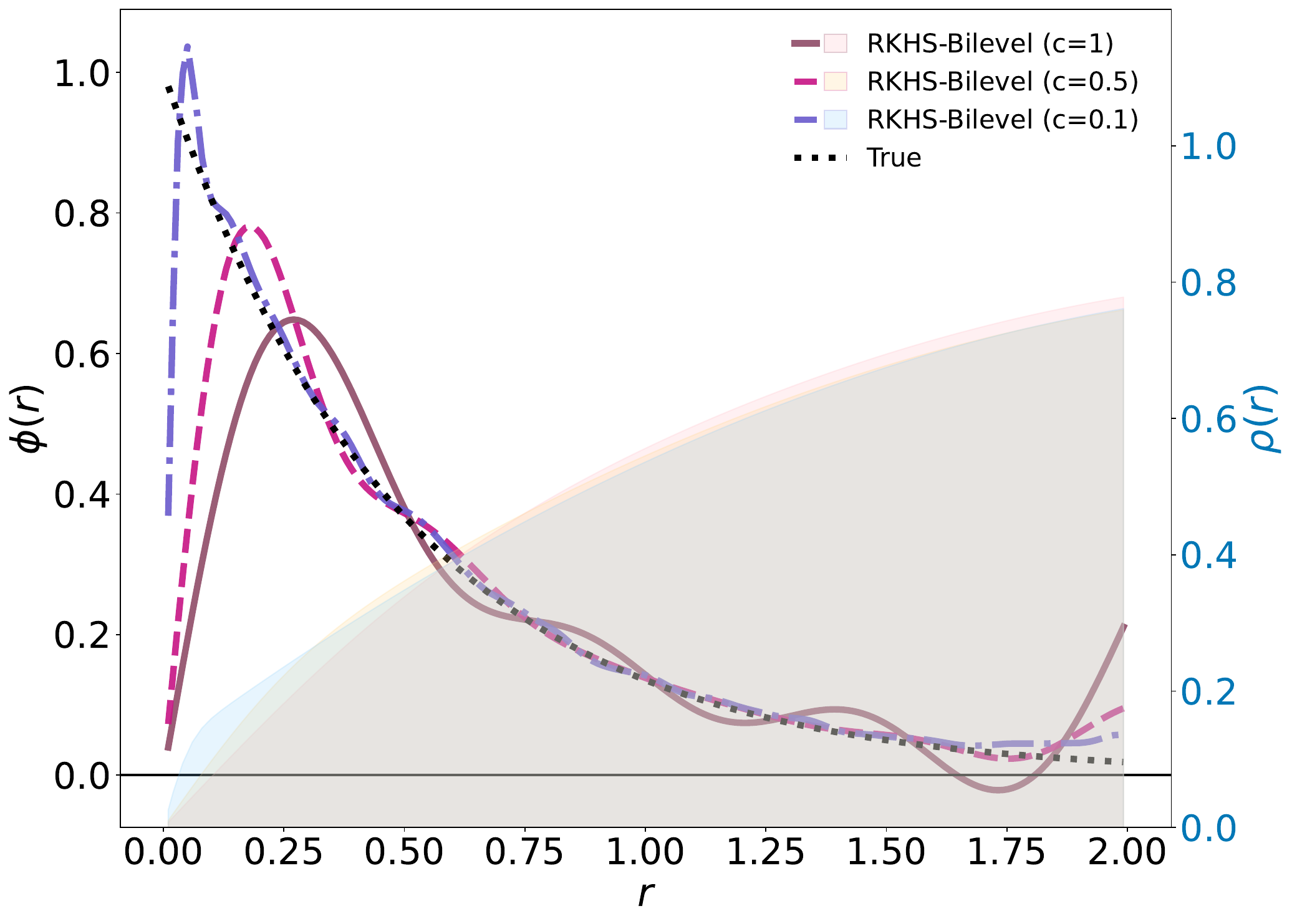}
    \label{fig:KDEband4}}
     \caption{Effect of the KDE bandwidth on density and L\'evy–density estimation for both Gaussian- and exponential-decay L\'evy measures. (a,c): KDE approximations of the PDF at a fixed time for several bandwidths $h$ (encoded by the constant $c$). Smaller $h$ produces more oscillatory PDFs; larger $h$ yields smoother but more biased PDFs near the origin. (b,d): corresponding L\'evy–density estimates: smaller $h$ leads to oscillatory estimates, while larger $h$ gives smoother but less accurate recovery.}   

    \label{fig:cbandwidth}
\end{figure}

%%%%%%%%%%%%%%%%%%%%==================%%%%%%%%

\section{Conclusion}
\label{sec:conclusion}

We developed a nonparametric framework for learning the L\'evy density $\phi$ in the Fokker–Planck equation using an adaptive RKHS and bilevel optimization. Our main theoretical result is a mesh-dependent error bound for the DA–RKHS Tikhonov estimator. Under a power spectral decay condition on the normal operator, we proved that the reconstruction error decays in the spatial mesh size, with an optimal choice of the regularization parameter balancing regularization bias and numerical/approximation errors. When numerical errors are negligible, the resulting rates are comparable to minimax-optimal rates for inverse statistical learning, while making explicit how spatial discretization degrades accuracy.

Algorithmically, we proposed a GSVD-based bilevel scheme (\emph{bilevel-RKHS}) to select the regularization parameter by minimizing a validation loss. The lower level solves the Tikhonov problem in closed form via GSVD, and the same decomposition provides an explicit inverse-Hessian for efficient hypergradient computation in the upper level. 

Numerically, we demonstrated that the bilevel-RKHS method yields more accurate and stable L\'evy density estimates than L-curve and GCV across multiple L\'evy densities and drift fields. Also, the adaptive RKHS norm outperforms $L^2_\rho$- and $\ell^2$-based regularizers within the same bilevel framework.

%%%%%%%%%%%%%%%%%%%%==================%%%%%%%%

\section*{Acknowledge}
L. Yang is partially supported by the CSC Fellowship, and would like to thank Prof. Xin Tong and Prof. Haibo Li for helpful discussions. The work of F.Lu is partially supported by the National Science Foundation under grant DMS-2238486 and DMS-2511283. This work of J. Duan and T. Gao was supported in part by the National Natural Science Foundation of China (12401233 and 12141107), the NSFC  International Collaboration Fund  for Creative Research Teams (Grant W2541005), the National Key Research and Development Program of China (2021ZD0201300), Guangdong Provincial Key Laboratory of Mathematical and Neural Dynamical Systems (2024B1212010004), the GuangDong Basic and Applied Basic Research Foundation (2025A1515011645), the Guangdong-Dongguan Joint Research Fund (2023A1515140016), the Cross Disciplinary Research Team on Data Science and Intelligent Medicine (2023KCXTD054).

%%%%%%%%%%%%%%%%%%%%%%%%%%%%%%%%%%%%%%%%%%%%%%%
%%%%%%%%%%%%%%%%%%%%%%%%%%%%%%%%%%%%%%%%%%%%%%%
\appendix
\section{Appendix: technical results and proofs}
\label{append:A}

\subsection{Proof of lemmas on RKHS}
\begin{proof}[Proof of Lemma \ref{lemma:rk}]
By definition, $\Gbar$ is positive semi-definite, i.e., for any finite set of points $\{r_i\}_{i=1}^m \subset \mathbb{R}^1$ and any coefficients $c_1, c_2, \ldots, c_m \in \mathbb{R}$, we have $\sum_{k=1}^n \sum_{k=1}^m c_k c_k \Gbar\left(r_j, r_k\right) \geqslant 0$.

To show $\Gbar\in L^2(\rho\otimes \rho)$, note that by {\rm Assumption \ref{assump:data}}, 
\[
\max_{i} \sup_{x\in \Omega,r\in [0,R_0]} \big| Q[p_{t_i}](x,r) \big| \leq 4  \max_{i} \sup_{x\in \Omega} p_{t_i}(x) \leq 4 C_{max}.  
\]

Then, for any $r,s\in [0,R_0]$, we have 
\begin{equation}
\begin{aligned}
\label{eq: boundGrs}
G(r, s) & =\frac{1}{N} \sum_{i=1}^N \int \left(Q[p_{t_i}](x, r) Q[p_{t_i}](x, s) \right) d x \\
        & \le 4C_{max}\frac{1}{N}\sum_{i=1}^N \int \left| Q[p_{t_i}](x, r) \right| dx  \le 4 C_{max} Z \rho(r). 
\end{aligned}
\end{equation}
Then, by the symmetry of $\Gbar$, we obtain 
\begin{equation}\label{eq:Gbarint}
\begin{aligned}
\iint|\Gbar(r, s)|^2 \rho(r) \rho(s) d r d s &= \iint \left|\frac{G(r, s)}{\rho(r) \rho(s)}\right|^2 \rho(r) \rho(s) d r d s \leq 16 C_{max}^2 Z^2 < \infty.
\end{aligned}
\end{equation}
That is, $\Gbar\in L^2(\rho\otimes \rho)$.

To prove \eqref{eq:lossFn_LG}, by definition of $\LGbar$ in \eqref{eq:LGbar}, we have 
\begin{equation*}   
\begin{aligned}
\innerp{\LGbar\phi,\psi}_{L^2_\rho}  & =\frac{1}{N} \sum_{i=1}^N \int_{\Omega_{R_0}} R_\phi\left[p_{t_i}\right]R_\psi\left[p_{t_i}\right] dx .\\
& =\frac{1}{N} \sum_{i=1}^N \int_{\Omega_{R_0}} \left(\int_0^{R_{0}} \phi(r) Q[p_t](x,r)dr \int_0^{R_{0}} \psi(s) Q[p_t](x,s)ds\right) dx .\\
& =\int_0^{R_{0}} \int_0^{R_{0}} \phi(r) \psi(s) G(r, s) d r d s  =\int_0^{R_{0}} \int_0^{R_{0}} \phi(r) \psi(s) \Gbar(r, s) \rho(d r) \rho(d s). 
\end{aligned}
\end{equation*}
Then, the loss function can be written as 
\begin{equation*}
\begin{aligned}
\mathcal{E}_\infty \left({\phi}\right) 
  & =\frac{1}{N} \sum_{i=1}^N \int_{\Omega_{R_0}}\left|R_{\phi}\left[p_{t_i}\right](x)-f_{t_i}(x)\right|^2 d x  = \innerp{\LGbar \phi, \phi}_{L^2_\rho} - 2\innerp{\phi^D, \phi}_{L^2_\rho} + C. 
\end{aligned}
\end{equation*}
Its Fr\'echet derivative in $L^2_\rho$ is $\nabla \mathcal{E} \left(\phi\right) =2\left(\mathcal{L}_{\Gbar} \phi-\phi^D\right)$. Thus, its unique minimizer in $\mathrm{Null}(\LGbar)^\perp$ is $\widehat \phi = \LGbar^{-1} \phi^D$. 
\end{proof}

\begin{proof}[Proof of Lemma \ref{lem:opma} (Operator form $\Leftrightarrow$ Matrix form)]
Since $\mathcal{L}_{\Gbar^M}$ is defined by $
(\mathcal{L}_{\Gbar^M}\phi)(r)
= \int_0^{R_0} \Gbar^M(r,s)\,\phi(s)\,\rho(s)\,ds,
$ 
using the definition of $G^M$ in \eqref{eq: discreteG} and $\Gbar^M$ in \eqref{eq:disbasis} and interchanging sums and integrals yields 
$$
\mathbf{\Abar}^M(k, k')
= \int_0^{R_0}\!\!\int_0^{R_0} G^M(r,s)\,\phi_k^M(r)\,\phi_{k'}^M(s)\,dr\,ds
= \langle \mathcal{L}_{\Gbar^M}\phi_k^M,\phi_{k'}^M\rangle_{L_\rho^2}.
$$
The reproducing property of $\Gbar^M$ implies that 
$$
\mathbf{\Gbar}^M(k,k')
= \Gbar^M(r_k,r_{k'})
= \langle \phi_k^M,\phi_{k'}^M\rangle_{H_{\Gbar^M}}
= \langle \mathcal{L}_{\Gbar^M}^{-1}\phi_k^M,\phi_{k'}^M\rangle_{L_\rho^2}.
$$
The identity $\mathbf{\bbar} ^M(k)
= \langle \phi^{D,M},\phi_k^M\rangle_{L_\rho^2}$ follows from the definition of $\phi^{D,M}$ and $R_{\phi_k^M}^M$ in \eqref{eq:RieszM}.

\noindent\emph{Operator form $\Rightarrow$ Matrix form} The operator form  in \eqref{eq:defphiMop} is equivalent to 
$$
(\mathcal{L}_{\Gbar^M} + \lambda \mathcal{L}_{\Gbar^M}^{-1})\hat\phi_\lambda^{n,M}
= \phi^{D,M}.
$$
Expand $\hat\phi_\lambda^{n,M} = \sum_{j=1}^n c_j\phi_j^M$ and test against each basis function $\phi_k^M$:
$$
\sum_j c_j\langle \mathcal{L}_{\Gbar^M}\phi_j^M,\phi_k^M\rangle_{L_\rho^2}
+ \lambda \sum_j c_j\langle \mathcal{L}_{\Gbar^M}^{-1}\phi_j^M,\phi_k^M\rangle_{L_\rho^2}
= \langle \phi^{D,M},\phi_k^M\rangle_{L_\rho^2}.
$$
In matrix form, this is $\bigl(\mathbf{\Abar}^M + \lambda \mathbf{\Gbar}^M\bigr)\mathbf c
= \mathbf{\bbar} ^M$. Thus, the solution is  
$ 
\hat{\mathbf c}_\lambda^{n,M}
= \bigl(\mathbf{\Abar}^M + \lambda \mathbf{\Gbar}^M\bigr)^{-1}\mathbf{\bbar} ^M
$. 

\noindent\emph{Matrix form $\Rightarrow$ operator form. }
Conversely, suppose $\hat\phi_\lambda^{n,M} = \sum_j \hat c_j \phi_j^M$ with $\hat{\mathbf c}_\lambda^{n,M}$ solving $
\bigl(\mathbf{\Abar}^M + \lambda \mathbf{\Gbar}^M\bigr)\hat{\mathbf c}_\lambda^{n,M}
= \mathbf{\bbar} ^M$. 
Then for each $k$,
$$
\sum_j \hat c_j\langle \mathcal{L}_{\Gbar^M}\phi_j^M,\phi_k^M\rangle_{L_\rho^2}
+ \lambda \sum_j \hat c_j\langle \mathcal{L}_{\Gbar^M}^{-1}\phi_j^M,\phi_k^M\rangle_{L_\rho^2}
= \langle \phi^{D,M},\phi_k^M\rangle_{L_\rho^2}. 
$$
Since $\{\phi_k^M\}$ spans $H_{\Gbar^M}$, this implies
$$
\langle \mathcal{L}_{\Gbar^M}\hat\phi_\lambda^{n,M}
+ \lambda \mathcal{L}_{\Gbar^M}^{-1}\hat\phi_\lambda^{n,M}
- \phi^{D,M},\psi\rangle_{L_\rho^2} = 0
\quad\forall \psi\in H_{\Gbar^M}.
$$
Since $\mathcal{L}_{\bar{G}^M}(L^2_\rho)  \subset H_{\bar{G}^M}$ and $\mathcal{L}_{\Gbar^M}$ is self-adjoint, replacing the test function $\psi $ by $\mathcal{L}_{\Gbar^M}\phi$ with $\phi\in L^2_\rho$,  we obtain  
$$
\langle  ( \mathcal{L}_{\Gbar^M}^2 
+ \lambda I ) \hat\phi_\lambda^{n,M}
- \mathcal{L}_{\Gbar^M} \phi^{D,M},\phi\rangle_{L_\rho^2} = 0
\quad\forall \psi\in L^2_\rho, 
$$ 
i.e., $
(\mathcal{L}_{\Gbar^M}^2 + \lambda I)\hat\phi_\lambda^{n,M}
= \mathcal{L}_{\Gbar^M}\phi^{D,M},
$
or equivalently, the operator form in \eqref{eq:defphiMop}. 
\end{proof}

%%%%%%%%%%%% 
\subsection{Proofs of three key Lemmas}\label{sec:tech_lemmas}
In this subsection we prove three key lemmas used in the proof of Theorem \ref{thm:conv}.

\begin{proof}[Proof of Lemma \ref{lem: regerror} (Regularization Error)] Note that 
\begin{equation}
\label{eq:diffphireg}
{\phi}_\lambda^{\infty}-\phi^*=\left(\mathcal{L}_{\Gbar}^2+\lambda I\right)^{-1} \mathcal{L}_{\Gbar}^2 \phi^*-\phi^*=-\lambda\left(\mathcal{L}_{\Gbar}^2+\lambda I\right)^{-1} \phi^*. 
\end{equation}
Since $\phi^*=\mathcal{L}_{\Gbar}^{\beta / 2} w$ with $w \in L_\rho^2$ and  $\LGbar$ is compact, by spectral representation we obtain,
$$
\left\|{\phi}_\lambda^{\infty}-\phi^*\right\|_{L_\rho^2} \leq \sup _{t \in \sigma(\LGbar^2)} \frac{\lambda t^{\beta / 4}}{t+\lambda}\|w\|_{L_\rho^2} .
$$
Since $\left\|\mathcal{L}_{\Gbar}\right\| \leq C_1$ by Lemma \ref{Lem: HSoperator}, we have $\sigma(\LGbar^2) \subset\left[0, C_1^2\right]$. 
Set $h_\lambda(t):=\lambda t^{\beta / 4} /(t+\lambda)$. Write $t=\lambda s$ to get
$
h_\lambda(\lambda s)=\lambda^{\beta / 4} \frac{s^{\beta / 4}}{1+s}$. 
\begin{itemize}
    \item If $0<\beta<4$, the function $s^\alpha /(1+s)$ with $\alpha:=\beta / 4 \in(0,1)$ attains its maximum at $s= \alpha /(1-\alpha)$ with value $\alpha^\alpha(1-\alpha)^{1-\alpha}$. Hence
$
\sup _{t \geq 0} h_\lambda(t)=\left(\frac{\beta}{4}\right)^{\beta / 4}\left(1-\frac{\beta}{4}\right)^{1-\beta / 4} \lambda^{\beta / 4} $. 
\item If $\beta=4, h_\lambda(t)=\lambda t /(t+\lambda) \leq \lambda$.
\item If $\beta>4, h_\lambda(t) \leq \lambda t^{\beta/4-1} \leq \lambda C_1^{\beta / 2-2}$.
\end{itemize}
Combine these and $\|w\|_{L_\rho^2} \leq R$ to obtain the stated bound.
\end{proof}

\begin{proof}[Proof of Lemma \ref{lem:approxerror} (Approximation Error)]
Recall that 
 $ {\phi}_\lambda^{\infty}=\left(\mathcal{L}_{\Gbar}^2+\lambda I\right)^{-1} \mathcal{L}_{\Gbar} \phi^D, \quad \hat{\phi}_\lambda^n=\left(\mathcal{L}_{\Gbar^n}^2+\lambda I\right)^{-1} \mathcal{L}_{\Gbar^n} \phi^{D,n} $.  
 Fix $\lambda>0$, denoting 
$$
\mathcal{A}:=\mathcal{L}_{\Gbar}^2+\lambda I, \mathcal{A}_n:=\mathcal{L}_{\Gbar^n}^2+\lambda I, 
$$ 
and using the resolvent identity $\left(\mathcal{A}_n\right)^{-1}- \mathcal{A}^{-1} = \left(\mathcal{A}_n\right)^{-1}\left(\mathcal{A}-\mathcal{A}_n\right) \mathcal{A}^{-1}$, we have 
\begin{align}
\hat{\phi}_\lambda^n-{\phi}_\lambda^{\infty}  
 =& \left(\mathcal{A}_n\right)^{-1} \mathcal{L}_{\Gbar^n}\left(\phi^{D,n}- \phi^D\right)+\left(\mathcal{A}_n\right)^{-1}\left(\mathcal{L}_{\Gbar^n}-\mathcal{L}_{\Gbar}\right) \phi^D+\left(\mathcal{A}_n\right)^{-1}\left(\mathcal{A}-\mathcal{A}_n\right) \mathcal{A}^{-1} \mathcal{L}_{\Gbar} \phi^D \notag  \\
=&\left(\mathcal{A}_n\right)^{-1} \mathcal{L}_{\Gbar^n}\left(\phi^{D,n}- \phi^D\right)+\left(\mathcal{A}_n\right)^{-1}\left(\mathcal{L}_{\Gbar^n}-\mathcal{L}_{\Gbar}\right) \phi^D+\left(\mathcal{A}_n\right)^{-1}\left(\mathcal{A}-\mathcal{A}_n\right){\phi}_\lambda^{\infty}. \label{eq:decompapproxerror}
\end{align}

We first bound $\left\|\mathcal{A}_n^{-1} \mathcal{L}_{\Gbar^n}\left(\phi^{D,n}- \phi^D\right)\right\|_{L_\rho^2}$. 
Recall the definitions of $\phi^{D,n}$ and $\phi^D$ in \eqref{eq:Rieszn} and \eqref{eq:phiD}, we have,  for any $\phi \in L_\rho^2$,
$$
\begin{aligned}
\left|\left\langle\phi^{D, n}-\phi^D, \phi\right\rangle_{L^2_{\rho}}\right| 
& = 
\bigg | \int_\delta^{R_0} \int \big(\sum_{\ell=1}^n Q(x,r_{\ell})\mathbf{1}_{I_{\ell}}(r) -Q(x,r)\big) \phi(r) d r f_{t_i}(x) d x \bigg|   \\
& \leq\left\|f_{t_i}\right\|_{\infty}\left|\Omega_{R_0}\right| \sup _x \sum_{\ell} \int_{I_{\ell}}\left|Q\left(x, r_{\ell}\right)-Q(x, r)\right||\phi(r)| d r \\
& \leq 2C_0\left|\Omega_{R_0}\right| C'_{\max} \Delta r \int_{\delta}^{R_0}|\phi(r)| d r \leq 2C_0\left|\Omega_{R_0}\right| C'_{\max} \Delta r \sqrt{\frac{R_0}{\rho_{0 }}}\|\phi\|_{L^2_{\rho}}.
\end{aligned}
$$
Here, under Assumption \ref{H:constant}, we get $\rho(r) = \rho_0$. Therefore, since $\Delta r= \Delta x$, we have
$$
\left\|\phi^{D, n}-\phi^D\right\|_{L^2_{\rho}}=\sup _{\|\phi\|_{L^2_{\rho}}=1}\left|\left\langle\phi^{D, n}-\phi^D, \phi\right\rangle\right| \leq 2 C_0\left|\Omega_{R_0}\right| C'_{\max} \sqrt{\frac{R_0}{\rho_{0 }}} \Delta x.
$$
Meanwhile, Lemma \ref{lem:effdim} gives $\|\mathcal{A}_n^{-1} \mathcal{L}_{\Gbar^n} \phi\|_{L^2_\rho}\leq \sqrt{\frac{\mathcal{N}_n(\lambda)}{\lambda}} \|\phi\|_{L^2_\rho}$ for any $\phi\in L^2_\rho$, and Lemma \ref{lem:diff-effdim} implies $|\mathcal{N}_n(\lambda)-\mathcal{N}(\lambda)| \leq \frac{2C_1C_2}{\lambda}\,\Delta x $. Thus, $\mathcal{N}_n(\lambda)
\le \mathcal{N}(\lambda) + |\mathcal{N}_n(\lambda)-\mathcal{N}(\lambda)| $ and $\sqrt{a+b}\le \sqrt{a}+\sqrt{b}$ imply    
\begin{equation}
\label{eq:Nnlambda}
\sqrt{\frac{\mathcal{N}_n(\lambda)}{\lambda}}
\;\le\;
\sqrt{\frac{\mathcal{N}(\lambda)}{\lambda}}
\;+\;
\sqrt{\frac{2C_1C_2}{\lambda^2}\,\Delta x}.
\end{equation}
Cobmining the above estimates with notation $C_L := 2 C_0\left|\Omega_{R_0}\right| C'_{\max} \sqrt{\frac{R_0}{\rho_{0}}}$, we obtain, 
$$
\begin{aligned}
\bigl\|\mathcal{A}_n^{-1} \mathcal{L}_{\Gbar^n}\bigl(\phi^{D,n}- \phi^D\bigr)\bigr\|_{L_\rho^2}
&\leq \sqrt{\frac{\mathcal{N}_n(\lambda)}{\lambda}}\,
      \bigl\|\phi^{D,n}- \phi^D\bigr\|_{L^2_{\rho}}\\
&\leq C_L\,\Delta x\,\sqrt{\frac{\mathcal{N}(\lambda)}{\lambda}} +
C_L\sqrt{2C_1C_2}\,\frac{\Delta x^{3/2}}{\lambda}. 
\end{aligned}
$$
Next, to bound the last two terms in \eqref{eq:decompapproxerror}, observe that
$$
\mathcal{A}_n^{-1}\left(\mathcal{L}_{\bar{G}^n}-\mathcal{L}_{\bar{G}}\right) \phi^D+\mathcal{A}_n^{-1}\left(\mathcal{A}-\mathcal{A}_n\right) \phi_\lambda^{\infty}=\mathcal{A}_n^{-1} \mathcal{L}_{\bar{G}^n}\left(\mathcal{L}_{\bar{G}}-\mathcal{L}_{\bar{G}^n}\right) \phi^*+\mathcal{A}_n^{-1}\left(\mathcal{L}_{\bar{G}^n}^2-\mathcal{L}_{\bar{G}}^2\right)\left(\phi^*-\phi_\lambda^{\infty}\right) .
$$
Using $\left\|\mathcal{L}_{\bar{G}^n}-\mathcal{L}_{\bar{G}}\right\| \leq C_2 \Delta x$ in Lemma \ref{Lem: HSoperator} and $\left\|\mathcal{A}_n^{-1} \mathcal{L}_{\bar{G}^n}\right\|\leq \sup _{t \geq 0} \frac{t}{t^2+\lambda}=\frac{1}{2 \sqrt{\lambda}}$, we have 
$$
\left\|\mathcal{A}_n^{-1} \mathcal{L}_{\bar{G}^n}\left(\mathcal{L}_{\bar{G}}-\mathcal{L}_{\bar{G}^n}\right) \phi^*\right\| \leq \frac{C_2}{2} \frac{\Delta x}{\sqrt{\lambda}}\left\|\phi^*\right\| .
$$
Since we get 
$
\left\|\mathcal{L}_{\bar{G}}\left(\phi^*-\phi_\lambda^{\infty}\right)\right\|=\left\|\lambda \mathcal{L}_{\bar{G}}\left(\mathcal{L}_{\bar{G}}^2+\lambda I\right)^{-1} \phi^*\right\| \leq \sup _{t \geq 0} \frac{\lambda t}{t^2+\lambda}\left\|\phi^*\right\| \leq \frac{\sqrt{\lambda}}{2}\left\|\phi^*\right\| 
$
from \eqref{eq:diffphireg},
$$
\begin{aligned}
\left\|\mathcal{A}_n^{-1}\left(\mathcal{L}_{\bar{G}^n}^2-\mathcal{L}_{\bar{G}}^2\right)\left(\phi^*-\phi_\lambda^{\infty}\right)\right\| & \le \left\|\mathcal{A}_n^{-1}\mathcal{L}_{\bar{G}^n}\left(\mathcal{L}_{\bar{G}^n} -\mathcal{L}_{\bar{G}}\right)\left(\phi^*-\phi_\lambda^{\infty}\right)\right\| + \left\|\mathcal{A}_n^{-1}\left(\mathcal{L}_{\bar{G}^n} -\mathcal{L}_{\bar{G}}\right)\mathcal{L}_{\bar{G}}\left(\phi^*-\phi_\lambda^{\infty}\right)\right\|\\
& \le  \frac{C_2}{2} \frac{\Delta x}{\sqrt{\lambda}}\left\|\phi^*-\phi_\lambda^{\infty}\right\| + \frac{C_2}{2} \frac{\Delta x}{\sqrt{\lambda}}\left\|\phi^*\right\|,
\end{aligned}
$$
based on $\mathcal{L}_{\bar{G}^n}^2-\mathcal{L}_{\bar{G}}^2=\mathcal{L}_{\bar{G}^n}\left(\mathcal{L}_{\bar{G}^n}-\mathcal{L}_{\bar{G}}\right)+\left(\mathcal{L}_{\bar{G}^n}-\mathcal{L}_{\bar{G}}\right) \mathcal{L}_{\bar{G}}$. By source condition in Assumption \ref{H:source_condi} and regularization error in Lemma \ref{lem: regerror}, $\left\|\phi^*\right\| \leq C_1^{\beta / 2} R$ and $\left\|\phi^*-\phi_\lambda^{\infty}\right\| \leq C\left(\beta, C_1\right) R \lambda^{\min \{\beta / 4,1\}}$, so
$$
\left\|\mathcal{A}_n^{-1}\left(\mathcal{L}_{\bar{G}^n}-\mathcal{L}_{\bar{G}}\right) \phi^D+\mathcal{A}_n^{-1}\left(\mathcal{A}-\mathcal{A}_n\right) \phi_\lambda^{\infty}\right\| \leq C_2 C_1^{\beta / 2} R \frac{\Delta x}{\sqrt{\lambda}}+\frac{C_2}{2} C\left(\beta, C_1\right)R \Delta x \lambda^{-1 / 2+\min \{\beta / 4,1\}}.
$$
For $0<\lambda \leq 1$, $\Delta x \lambda^{-1 / 2+\min \{\beta / 4,1\}} \leq \Delta x \lambda^{-1 / 2}$ with $\min \{\beta / 4,1\} \in[0,1]$. Hence, the term $\Delta x \lambda^{-1 / 2+\min \{\beta / 4,1\}}$ can be absorbed into $\Delta x \lambda^{-1 / 2}$ by adjusting the constant, yielding the final bound. Combing the bounds,
$$
\left\|\hat{\phi}_\lambda^n-\phi_\lambda^{\infty}\right\|_{L_\rho^2} \leq C_L \Delta x \sqrt{\frac{\mathcal{N}(\lambda)}{\lambda}}+C_L \sqrt{2 C_1 C_2} \frac{\Delta x^{3 / 2}}{\lambda}+ C_\beta \frac{\Delta x}{\sqrt{\lambda}},
$$
where $C_\beta = C_2 C_1^{\beta / 2} R + \frac{C_2}{2} C\left(\beta, C_1\right)R$.
\end{proof}

%%%%%%%%%%%%%%%%%%%%%%%%%%%%%%%%

\begin{proof}[Proof of Lemma \ref{lem:numerroroperator} (Numerical error)]
Given by the definitions of $\hat{\phi}_\lambda^n$ and $\hat{\phi}_\lambda^{n,M}$ in \eqref{eq:defphi}, add and subtract $\mathcal{A}_M^{-1}\mathcal{L}_{\bar{G}^M}\phi^{D,n}$:
$$
\begin{aligned}
\hat{\phi}_\lambda^{n,M}-\hat{\phi}_\lambda^n
&= \mathcal{A}_M^{-1}\mathcal{L}_{\bar{G}^M}(\phi^{D,M}-\phi^{D,n}) + \bigl(\mathcal{A}_M^{-1}\mathcal{L}_{\bar{G}^M} - \mathcal{A}_n^{-1}\mathcal{L}_{\bar{G}^n}\bigr)\phi^{D,n},
\end{aligned}
$$
where $\mathcal{A}_n := \mathcal{B}_n + \lambda I,\, \mathcal{A}_M := \mathcal{B}_M + \lambda I$ with $\mathcal{B}_n := \mathcal{L}_{\bar{G}^n}^2,\,
\mathcal{B}_M := \mathcal{L}_{\bar{G}^M}^2$. For the second term, use the resolvent identity:
$$
\mathcal{A}_M^{-1}\mathcal{L}_{\bar{G}^M}
 - \mathcal{A}_n^{-1}\mathcal{L}_{\bar{G}^n}
= \mathcal{A}_M^{-1}(\mathcal{L}_{\bar{G}^M}-\mathcal{L}_{\bar{G}^n})
 + \mathcal{A}_M^{-1}(\mathcal{B}_n-\mathcal{B}_M)\mathcal{A}_n^{-1}
   \mathcal{L}_{\bar{G}^n}.
$$
Thus
$$
\begin{aligned}
\hat{\phi}_\lambda^{n,M}-\hat{\phi}_\lambda^n
&= \underbrace{\mathcal{A}_M^{-1}\mathcal{L}_{\bar{G}^M}(\phi^{D,M}-\phi^{D,n})}_{T_1}
 + \underbrace{\mathcal{A}_M^{-1}(\mathcal{L}_{\bar{G}^M}-\mathcal{L}_{\bar{G}^n})\phi^{D,n}}_{T_2}
\\
&\quad
 + \underbrace{\mathcal{A}_M^{-1}(\mathcal{B}_n-\mathcal{B}_M)\mathcal{A}_n^{-1}\mathcal{L}_{\bar{G}^n}\phi^{D,n}}_{T_3}.
\end{aligned}
$$

\medskip\noindent
\emph{Step 1: bound for $T_1$.}
By the effective-dimension inequality for $\mathcal{A}_M$ and $\mathcal{L}_{\bar{G}^M}$ in Lemma \ref{lem:effdim},
$$
\|T_1\|_{L^2_\rho}
= \|\mathcal{A}_M^{-1}\mathcal{L}_{\bar{G}^M}(\phi^{D,M}-\phi^{D,n})\|_{L^2_\rho}
\le \sqrt{\frac{\mathcal{N}_M(\lambda)}{\lambda}}\,\|\phi^{D,M}-\phi^{D,n}\|_{L^2_\rho}.
$$
By definitions of $\left\langle\phi^{D, n}, \phi\right\rangle_{L^2_{\rho}} $ in \eqref{eq:Rieszn} and $\left\langle \phi^{D,M},\phi\right\rangle_{L_\rho^2}$ in \eqref{eq:RieszM}, with $R_\phi^M[p_{t_i}](x_m) = R_\phi^n[p_{t_i}](x_m)$, 
$$
\begin{aligned}
\bigl|\langle\phi^{D, M}-\phi^{D,n}, \phi\rangle_{L^2_{\rho}}\bigr|
&= \Bigl| \frac{1}{N}\sum_{i=1}^N\sum_{m=1}^M 
  R_\phi^M[p_{t_i}](x_m)\,\tilde f_{t_i}(x_m)\,\Delta x 
 - \frac{1}{N} \sum_{i=1}^N \int_{\Omega_{R_0}} R^n_{\phi}\left[p_{t_i}\right](x) f_{t_i}(x)\, d x\Bigr| \\
&\leq A_1 + A_2,
\end{aligned}
$$
with
$$
\begin{aligned}
A_1 
&:= \left|\frac{1}{N}\sum_{i=1}^N\sum_{m=1}^M
  R_\phi^n[p_{t_i}](x_m)\bigl(\tilde f_{t_i}(x_m)-f_{t_i}(x_m)\bigr)\Delta x\right|,\\
A_2 
&:= \left|\frac{1}{N}\sum_{i=1}^N\sum_{m=1}^M
  R_\phi^n[p_{t_i}](x_m)f_{t_i}(x_m)\Delta x
 - \frac{1}{N}\sum_{i=1}^N\int_{\Omega_{R_0}}R_\phi^n[p_{t_i}](x)f_{t_i}(x)\,dx\right|.
\end{aligned}
$$
The key step in bounding $A_1$ is to control $f_{t_i}\left(x_m\right)-\tilde{f}_{t_i}\left(x_m\right)$, which decomposes into three contributions: the time derivative, the first spatial derivative and the second spatial derivative. Let $w_1(x) = b(x)p(x,t_i)$ and $w_1(x) = \sigma^2(x)p(x,t_i)$. By Taylor expansion in time and space, 
\begin{equation}
\label{eq:diffbound}
\begin{aligned}
&\left|\partial_t p\left(t_i, x_m\right)-\frac{p\left(t_{i+1}, x_m\right)-p\left(t_i, x_m\right)}{\Delta t}\right|=\frac{\Delta t}{2}\left|\partial^2_{t}p\left(\tau, x_m\right)\right| \le \frac{\Delta t}{2} \sup _{s \in\left[t_i, t_{i+1}\right]}\left|\partial^2_{t}p(s,  x_m)\right| \\
&\left|\partial_x w_1\left(x_m\right) -\frac{w_1(x_{m+1})-w_1(x_{m-1})}{2 \Delta x}\right| \leq \frac{\Delta x^2}{6} \sup _{\xi \in\left[x_{m-1}, x_{m+1}\right]}\left|\partial^3_xw_1\left(\xi\right)\right|\\
& \left|\partial^2_{x} w_2\left( x_m\right)-\frac{w_2\left(x_{m+1}\right) -2w_2\left(x_m\right)+w_2 \left(x_{m-1}\right)}{\Delta x^2}\right| \leq \frac{\Delta x^2}{12}\sup _{\zeta \in\left[x_{m-1}, x_{m+1}\right]}\left|\partial^4_{x} w_2 \left(\zeta\right)\right|
\end{aligned}
\end{equation}
By the triangle inequality, we get
\begin{equation}
\left|f_{t_i}\left(x_m\right)-\tilde{f}_{t_i}\left(x_m\right)\right| \le C_t \Delta t+C_w \Delta x^2,
\end{equation}
where $C_t$ and $C_w$ depend only on bounds of $\partial_t^2 p, \partial_x^3(b p)$ and $\partial_x^4\left(\sigma^2 p\right)$ from Remark \eqref{rem:regularity}.Using the bound $\left\|R_\phi^n\left[p_{t_i}\right]\right\|_{\infty}=\sup _x\left|\sum_{\ell=1}^n \int Q\left[p_{t_i}\right]\left(x, r_{\ell}\right) \mathbf{1}_{I_{\ell}}(r) \phi(r) d r\right| \leq 4 C_{\max } \sqrt{\frac{R_0}{\rho_{0 }}}\|\phi\|_{L_\rho^2}$, which follows from Cauchy-Schwarz inequality applied to $\int_{\delta}^{R_0}|\phi(r)| d r$, together with  \eqref{eq:diffbound} and Assumption \ref{H:constant}, we have
$$
\begin{aligned}
A_1 
&\le \frac{1}{N}\sum_{i=1}^N \|R_\phi^n[p_{t_i}]\|_\infty
     \bigl(C_t \Delta t+C_w \Delta x^2\bigr)\sum_{m=1}^M \Delta x \\
&\le 4 C_{\max } \sqrt{\frac{R_0}{\rho_{0}}} 
   |\Omega_{R_0}|\bigl(C_t \Delta t+C_w \Delta x^2\bigr)\,\|\phi\|_{L^2_{\rho}}.
\end{aligned}
$$
For $A_2$, set $b_i(x):=R^n_{\phi}\left[p_{t_i}\right](x) f_{t_i}(x)$. A standard quadrature error estimate yields
$$
\left|\int_{\Omega_{R_0}} b_{i}(x) d x-\sum_{m=1}^M b_{i} \left(x_m\right) \Delta x\right| 
\le \frac{1}{2}\|b_i'\|_\infty\,|\Omega_{R_0}|\,\Delta x.
$$
Using $\|b_i'\|_\infty 
\le \|\partial_x R^n_{\phi}[p_{t_i}]\|_\infty\|f_{t_i}\|_\infty +\|R^n_{\phi}[p_{t_i}]\|_\infty\|\partial_x f_{t_i}\|_\infty$ and the bounds
$$
\|R_{\phi}^n[p_{t_i}]\|_{\infty}
\le 4 C_{\max } \sqrt{\frac{R_0}{\rho_{0}}}\,\|\phi\|_{L^2_{\rho}},
\qquad
\|\partial_x R_{\phi}^n[p_{t_i}]\|_{\infty}
\le 4 C_{\max }^{'} \sqrt{\frac{R_0}{\rho_{0}}}\,\|\phi\|_{L^2_{\rho}},
$$
we obtain
$$
A_2 \le 4 \bigl( C_{\max }^{'} C_0 +  C_{\max } C_1\bigr) 
|\Omega_{R_0}|\sqrt{\frac{R_0}{\rho_{0}}}\,\|\phi\|_{L^2_{\rho}}\,\Delta x.
$$
Altogether,
$$
\bigl|\langle\phi^{D, M}-\phi^{D,n}, \phi\rangle_{L^2_{\rho}}\bigr|
\le 4 \sqrt{\frac{R_0}{\rho_{0}}}\,
|\Omega_{R_0}|\Bigl[
C_{\max}\bigl(C_t \Delta t+C_w \Delta x^2\bigr)  
+ \bigl( C_{\max }^{'} C_0 +  C_{\max } C_1\bigr)\Delta x
\Bigr]\|\phi\|_{L^2_{\rho}}.
$$
Taking the supremum over $\|\phi\|_{L^2_\rho}=1$ gives a bound for $\|\phi^{D,M} - \phi^{D,n}\|_{L^2_\rho}$ and thus, 
$$
\|T_1\|_{L^2_\rho}
\le 4 \sqrt{\frac{R_0}{\rho_{0}}}\,
|\Omega_{R_0}|\Bigl[
C_{\max}\bigl(C_t \Delta t+C_w \Delta x^2\bigr)  
+ \bigl( C_{\max }^{'} C_0 +  C_{\max } C_1\bigr)\Delta x
\Bigr]\sqrt{\frac{\mathcal{N}_M(\lambda)}{\lambda}}.
$$
\emph{Step 2: bound for $T_2$.} Under Assumption \ref{H:constant}, we have $\rho(r) =\rho_0$. From
$$
\begin{aligned}
\bigl|\langle \phi^{D,n},\phi\rangle_{L^2_\rho}\bigr|
&\le \frac{1}{N}\sum_{i=1}^N\int_{\Omega_{R_0}}
|R_\phi^n[p_{t_i}](x)|\,|f_{t_i}(x)|\,dx \le  4 C_{\max }  C_0  |\Omega_{R_0}|\sqrt{\frac{R_0}{\rho_{0}}}\,\|\phi\|_{L^2_\rho},
\end{aligned}
$$
we get $\|\phi^{D,n}\|_{L^2_\rho}
\le 4 C_{\max }  C_0 |\Omega_{R_0}| \sqrt{\frac{R_0}{\rho_{0}}}.$ Then, by Lemma \ref{Lem: HSoperator},
$$
\begin{aligned}
\|T_2\|_{L^2_\rho}
&= \|\mathcal{A}_M^{-1}(\mathcal{L}_{\bar{G}^M}-\mathcal{L}_{\bar{G}^n})\phi^{D,n}\|_{L^2_\rho} \\
&\le \|\mathcal{A}_M^{-1}\|\;\|\mathcal{L}_{\bar{G}^M}-\mathcal{L}_{\bar{G}^n}\|\;
     \|\phi^{D,n}\|_{L^2_\rho} 
\le 4 C_{\max }  C_0 C_2\,|\Omega_{R_0}| \sqrt{\frac{R_0}{\rho_{0}}}\,
   \frac{\Delta x}{\lambda}.
\end{aligned}
$$
\medskip\noindent
\emph{Step 3: bound for $T_3$.} By the source condition and $\left\|\mathcal{L}_{\Gbar}\right\| \leq C_1 $ in Lemma \ref{Lem: HSoperator}, we have 
$$
\left\|{\phi}_\lambda^{\infty}\right\|_{L_\rho^2} \leq \sup _{t \in \sigma(\mathcal{B})} \frac{t^{1 + \beta / 4}}{t+\lambda}\|w\|_{L_\rho^2} \leq \sup _{t \in \sigma(\mathcal{B})} t^{\beta / 4}\|w\|_{L_\rho^2} \leq C_1^{\beta/2}R. 
$$
Using the bound on $\left\|\hat{\phi}_\lambda^n-\phi_\lambda^{\infty}\right\|_{L_\rho^2}$ in Lemma \ref{lem:approxerror} and the bound on $\left\|\phi_\lambda^{\infty}\right\|_{L_\rho^2}$,
$$
\begin{aligned}
\left\|\mathcal{A}_n^{-1} \mathcal{L}_{\bar{G}^n} \phi^{D, n}\right\|_{L_\rho^2}=\left\|\hat{\phi}_\lambda^n\right\|_{L_\rho^2} & \leq\left\|\hat{\phi}_\lambda^n-\phi_\lambda^{\infty}\right\|_{L_\rho^2}+\left\|\phi_\lambda^{\infty}\right\|_{L_\rho^2} \\
& \leq C_L \Delta x \sqrt{\frac{\mathcal{N}(\lambda)}{\lambda}}+C_L \sqrt{2 C_1 C_2} \frac{\Delta x^{3 / 2}}{\lambda}+C_\beta \frac{\Delta x}{\sqrt{\lambda}}+C_1^{\beta / 2} R .
\end{aligned}
$$
Combining the bound of $\|\mathcal{B}_n-\mathcal{B}_M\|$ from Lemma \ref{Lem: HSoperator} and the bound of $\left\|\mathcal{A}_n^{-1} \mathcal{L}_{\bar{G}^n} \phi^{D, n}\right\|_{L_\rho^2}$, 
$$
\begin{aligned}
\left\|T_3\right\|_{L_\rho^2} & =\left\|\mathcal{A}_M^{-1}\left(\mathcal{B}_n-\mathcal{B}_M\right) \mathcal{A}_n^{-1} \mathcal{L}_{\bar{G}^n} \phi^{D, n}\right\|_{L_\rho^2} \\
& \leq\left\|\mathcal{A}_M^{-1}\right\|\left\|\mathcal{B}_n-\mathcal{B}_M\right\|\left\|\mathcal{A}_n^{-1} \mathcal{L}_{\bar{G}^n} \phi^{D, n}\right\|_{L_\rho^2} \\
& \leq \frac{\Delta x C_1C_2}{\lambda}\left(C_L \Delta x \sqrt{\frac{\mathcal{N}(\lambda)}{\lambda}}+C_L \sqrt{2 C_1 C_2} \frac{\Delta x^{3 / 2}}{\lambda}+C_\beta \frac{\Delta x}{\sqrt{\lambda}}+C_1^{\beta / 2} R\right)\\ 
\end{aligned}
$$

\medskip\noindent
\emph{Step 4: collect the bounds.} From  Lemma \ref{lem:diff-effdim} and \eqref{eq:Nnlambda} we know that $\sqrt{\frac{\mathcal{N}_M(\lambda)}{\lambda}} \le \sqrt{\frac{\mathcal{N}_n(\lambda)}{\lambda}} + \sqrt{\frac{2C_1C_2}{\lambda^2}\,\Delta x} \le \sqrt{\frac{\mathcal{N}(\lambda)}{\lambda}} + 2\sqrt{\frac{2C_1C_2}{\lambda^2}\,\Delta x}$. Putting together the bounds for $T_1$, $T_2$, and $T_3$ and substituting the above estimates for 
$\sqrt{\mathcal{N}_M(\lambda)/\lambda}$ and $\sqrt{\mathcal{N}_n(\lambda)/\lambda}$, we obtain
$$
\begin{aligned}
\|\hat{\phi}_\lambda^{n,M}-\hat{\phi}_\lambda^n\|_{L^2_\rho}
&\le \|T_1\|_{L^2_\rho} + \|T_2\|_{L^2_\rho} + \|T_3\|_{L^2_\rho} \\
& \leq K_1\left(\Delta t+\Delta x+\Delta x^2\right)\left(\sqrt{\frac{\mathcal{N}(\lambda)}{\lambda}}+\sqrt{\frac{\Delta x}{\lambda^2}}\right) \\ & +K_2 \frac{\Delta x}{\lambda}+K_3 \frac{\Delta x^2}{\lambda} \sqrt{\frac{\mathcal{N}(\lambda)}{\lambda}}+K_4 \frac{\Delta x^{5 / 2}}{\lambda^2} + K_5\frac{\Delta x^{2}}{\lambda^{3/2}}
\end{aligned}
$$
where the constants $K_i >0$ are independent of $\Delta x$, $\Delta t$ and $\lambda$.
This completes the proof.
\end{proof}

\subsection{Technical lemmas and their proofs}
%%% 
\begin{lemma} \label{Lem: HSoperator}
Under Assumptions \ref{H:boundf} and \ref{H:constant}, the kernels $\Gbar, \Gbar^n, \Gbar^M$, defined in \eqref{eq:barG}, \eqref{eq:Gbar^n}, \eqref{eq: discreteG}, are in $L^2(\rho \otimes \rho)$ with norms bounded by $C_1$, so their integral operators $\LGbar,\mathcal{L}_{\Gbar^n},\mathcal{L}_{\Gbar^M} $ have norms bounded by $C_1$. 
Moreover,
\begin{align}
\left\|\mathcal{L}_{\Gbar}-\mathcal{L}_{\Gbar^n}\right\| & \leq\left\|\Gbar-\Gbar^n\right\|_{L^2(\rho \otimes \rho)} \leq C_2 \Delta x,\\
\left\|\mathcal{L}_{\Gbar^n}- \mathcal{L}_{\Gbar^M} \right\| & \leq\left\|\Gbar^n-\Gbar^M\right\|_{L^2(\rho \otimes \rho)} \leq C_2 \Delta x,
\end{align}
with $C_1=4 C_{\max }Z$, $C_2=\frac{16 C_{\max }\left|\Omega_{R_0}\right| C'_{\max}}{\rho_0^2}$ and $\Delta x  = \Delta r$.
\end{lemma}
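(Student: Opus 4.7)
The plan is to separate the lemma into three tasks: (i) the $L^2(\rho\otimes\rho)$-norm bounds for the three kernels, from which the operator-norm bounds follow immediately via the standard Hilbert--Schmidt-dominates-operator-norm inequality $\|\mathcal{L}_K\|\le\|K\|_{L^2(\rho\otimes\rho)}$; (ii) the Lipschitz-plus-quadrature estimate $\|\Gbar-\Gbar^n\|_{L^2(\rho\otimes\rho)}\le C_2\Delta x$; and (iii) the Riemann-sum comparison $\|\Gbar^n-\Gbar^M\|_{L^2(\rho\otimes\rho)}\le C_2\Delta x$.

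For (i), the bound on $\|\Gbar\|_{L^2(\rho\otimes\rho)}$ is already contained in \eqref{eq: boundGrs}--\eqref{eq:Gbarint} from the proof of Lemma \ref{lemma:rk}. For $\Gbar^n$ and $\Gbar^M$, I would apply the same pointwise estimate at grid nodes; the corresponding discrete $L^2$-sums $\sum_{\ell,\ell'}|\Gbar(r_{\ell'},r_\ell)|^2\rho_0^2(\Delta x)^2$ are bounded by $C_1^2$ up to the constant-$\rho$ assumption (and with $\widehat\rho$ in place of $\rho_0$ for $\Gbar^M$, which by Assumption \ref{H:constant} differs from $\rho_0$ by an $O(\Delta x)$ perturbation).

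For (ii), the first move is to establish Lipschitz regularity of $\Gbar$. Using $\partial_r Q[p_{t_i}](x,r)=p_{t_i}'(x+r)-p_{t_i}'(x-r)$ together with the uniform bound $\|\partial_x p_{t_i}\|_\infty\le C'_{\max}$ from Remark \ref{rem:regularity}, and the sup-bound $|Q[p_{t_i}](x,s)|\le 4C_{\max}$, differentiation under the integral yields $\max\bigl(|\partial_r G(r,s)|,|\partial_s G(r,s)|\bigr)\le 8 C_{\max} C'_{\max}|\Omega_{R_0}|$. Under Assumption \ref{H:constant}, $\Gbar=G/\rho_0^2$ is therefore Lipschitz in each argument with constant $L$ of order $C_{\max}C'_{\max}|\Omega_{R_0}|/\rho_0^2$. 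Since $\Gbar^n$ is the piecewise-constant interpolant of $\Gbar$ at grid nodes with mesh $\Delta x=\Delta r$, the cellwise estimate $|\Gbar-\Gbar^n|\le 2L\Delta x$ on each $I_{\ell'}\times I_\ell$ holds, and integrating its square against $\rho_0^2\,dr\,ds$ over $[\delta,R_0]^2$ gives the claimed $L^2$-bound with $C_2$ as stated up to a harmless constant absorption.

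For (iii), both $\Gbar^n$ and $\Gbar^M$ are piecewise constant on the same grid, so it suffices to bound the nodal differences $|\Gbar(r_{\ell'},r_\ell)-\Gbar^M(r_{\ell'},r_\ell)|$ uniformly. This decomposes into (a) the numerator error $G-G^M$, which is the left-endpoint Riemann-sum error in $x$ for $\int Q[p_{t_i}](x,r_{\ell'})Q[p_{t_i}](x,r_\ell)\,dx$, and (b) the denominator error $\rho_0-\widehat\rho(r_\ell)$, the analogous Riemann-sum error for \eqref{eq:rho}. The uniform Lipschitz control $\|\partial_x p_{t_i}\|_\infty\le C'_{\max}$ makes both integrands Lipschitz in $x$, so standard Riemann-sum bounds give $O(\Delta x)$ in each piece; a first-order expansion in the denominator, using that $\widehat\rho(r_\ell)\ge\rho_0/2$ once $\Delta x$ is small, then combines (a) and (b) into a single $O(\Delta x)$ bound. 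The main obstacle is the bookkeeping in Step (iii): one must track the numerator--denominator interaction between $G$ and $\widehat\rho$ while verifying that $\widehat\rho$ stays uniformly bounded away from zero, but this is straightforward once $\Delta x$ is chosen small enough relative to $\rho_0$.
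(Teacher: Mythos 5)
Your proposal is substantially correct and follows essentially the same route as the paper's proof, with two minor differences that are worth recording. For the $\Gbar-\Gbar^n$ estimate, the paper does not differentiate $G$; it instead uses the increment bound
$\bigl|Q[p_{t_i}](x,s_k)-Q[p_{t_i}](x,s)\bigr|\le 2C'_{\max}\Delta r$
from Remark~\ref{rem:regularity} and the two-term telescope
$G(r_{k'},s_k)-G(r,s)=\frac{1}{N}\sum_i\int Q(x,r_{k'})\bigl(Q(x,s_k)-Q(x,s)\bigr)dx+\frac{1}{N}\sum_i\int\bigl(Q(x,r_{k'})-Q(x,r)\bigr)Q(x,s)\,dx$,
each term bounded by $8C_{\max}|\Omega_{R_0}|C'_{\max}\Delta r$. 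Your differentiation-under-the-integral argument with $|\partial_r G|\le 8C_{\max}C'_{\max}|\Omega_{R_0}|$ followed by a cellwise Lipschitz bound is mathematically equivalent and produces the same constant $C_2$, so this is a stylistic rather than substantive difference. For the $\Gbar^n-\Gbar^M$ estimate, the paper is quite terse (``follows by the same argument, noting that the same estimate for $|G(r_{k'},s_k)-G(r,s)|$ holds''), which does not literally describe what happens: the relevant error there is a Riemann-sum error in $x$ (in both the numerator $G$ and the denominator $\rho$), not an increment in $(r,s)$. Your step (iii), which explicitly decomposes the nodal difference $\Gbar(r_{\ell'},r_\ell)-\Gbar^M(r_{\ell'},r_\ell)$ into the quadrature error $G-G^M$ and the denominator perturbation $\rho_0-\widehat\rho$, and then controls $\widehat\rho$ away from zero, is therefore a more careful rendering of what the paper asserts; it buys nothing different in the conclusion but is the honest way to fill in that step.
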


\begin{proof}[Proof of Lemma \ref{Lem: HSoperator}]
Based on the definition of $\Gbar$ in \eqref{eq:barG} and Assumption  \ref{assump:data},
$$
\|\Gbar\|_{L^2(\rho \otimes \rho)}^2=\iint\left|\Gbar(r,s)\right|^2 \rho(r) \rho(s) d r d s \leq  16 C_{\max }^2 Z^2
$$
so $\|\Gbar\|_{L^2(\rho \otimes \rho)} \leq C_1$. Similarly,  $\left\|\Gbar^n\right\|_{L^2(\rho \otimes \rho)}, \left\|\Gbar^{M} \right\|_{L^2(\rho \otimes \rho)} \leq C_1$. Then, by the definition of $\mathcal{L}_{\Gbar}$ in \eqref{eq:LGbar}, for any $\phi \in L_{\rho}^2$
$$
\left|\left(\mathcal{L}_{\Gbar} \phi\right)(r)\right|=\left|\int \Gbar(r, s) \phi(s) \rho(s) d s\right| \leq\left(\int|\Gbar(r, s)|^2 \rho(s) d s\right)^{1 / 2}\|\phi\|_{L_\rho^2}
$$
by Cauchy-Schwarz. Squaring and integrating in $r$ against $\rho(r) d r$ gives
$$
\left\|\mathcal{L}_{\Gbar} \phi\right\|_{L_\rho^2}^2 \leq\left(\iint|\Gbar(r, s)|^2 \rho(r) \rho(s) d r d s\right)\|\phi\|_{L_\rho^2}^2=\|\Gbar\|_{L^2(\rho \otimes \rho)}^2\|\phi\|_{L_\rho^2}^2
$$
Taking $\sup _{\phi \neq 0}$ yields $\left\|\mathcal{L}_{\Gbar}\right\| \leq\|\Gbar\|_{L^2(\rho \otimes \rho)}$. Applying the same argument to $\Gbar^n$ proves $\left\|\mathcal{L}_{\Gbar^n}\right\| \leq\left\|\Gbar^n\right\|_{L^2(\rho\otimes \rho)}$. The same argument with $\Gbar$ replaced by $\Gbar-\Gbar^n$ gives $\left\|\mathcal{L}_{\Gbar}-\mathcal{L}_{\Gbar^n}\right\| \leq\left\|\Gbar-\Gbar^n\right\|_{L^2(\rho \otimes \rho)}$. Finally, by Remark \ref{rem:regularity}, for any $x$,
$$
\left|Q\left[p_{t_i}\right]\left(x, s_k\right)-Q\left[p_{t_i}\right](x, s)\right| \leq\left|p_{t_i}\left(x+s_k\right)-p_{t_i}(x+s)\right|+\left|p_{t_i}\left(x-s_k\right)-p_{t_i}(x-s)\right| \leq 2C'_{\max}\Delta r.
$$
Thus, for $(r, s) \in I_{k^{'}} \times I_k$,
$$
\begin{aligned}
\left|G\left(r_{k^{'}}, s_k\right)-G(r, s)\right| \leq & \frac{1}{N} \sum_{i=1} \int_{\Omega_{R_0}}\left|Q\left[p_{t_i}\right]\left(x, r_{k^{'}}\right)\right|\left|Q\left[p_{t_i}\right]\left(x, s_k\right)-Q\left[p_{t_i}\right](x, s)\right| d x \\
& +\frac{1}{N} \sum_{i=1}^N \int_{\Omega_{R_0}}\left|Q\left[p_{t_i}\right](x, s)\right|\left|Q\left[p_{t_i}\right]\left(x, r_{k^{'}}\right)-Q\left[p_{t_i}\right](x, r)\right| d x \\
\
\leq & 16 C_{\max } \left|\Omega_{R_0}\right| C'_{\max} \Delta r,
\end{aligned}
$$
with $\left|Q\left[p_{t_i}\right]\left(x, r\right)\right| \leq 4 C_{\max} ,\, \forall  x,\,r$. Under Assumption \ref{H:constant}, $\rho(r) = \rho_0$. Using the definition of $\Gbar^n$ in \eqref{eq:Gbar^n},
$$
\begin{aligned}
\left\|\Gbar^n-\Gbar\right\|_{L^2(\rho \otimes \rho)}^2  
& =\sum_{k^{'}, k} \iint_{I_{k^{'}} \times I_k} \frac{\left|G\left(r_{k^{'}}, s_k\right)-G(r, s)\right|^2}{\rho(r) \rho(s)} d r d s \\
& \leq\left(\frac{16 C_{\max }\left|\Omega_{R_0}\right| C'_{\max} \Delta r}{\rho_0^2}\right)^2 \sum_{k^{'}, k} \iint_{I_{k^{'}} \times I_k} \rho(r) \rho(s) d r d s \\
& =\left(\frac{16 C_{\max }\left|\Omega_{R_0}\right| C'_{\max}}{\rho_0^2}\right)^2(\Delta r)^2
\end{aligned}
$$
with  $\sum_{k^{'}, k} \iint_{I_{k^{'}} \times I_k} \rho(r) \rho(s) d r d s=\iint \rho(r) \rho(s) d r d s=1$.
Taking square roots gives $\left\|\Gbar^n-\Gbar\right\|_{L^2(\rho \otimes \rho)} \leq C_2 \Delta r$. 

The bound for $\left\|\mathcal{L}_{\Gbar^n}- \mathcal{L}_{\Gbar^M} \right\|$ follows by the same argument, using the definition of $\Gbar^M$ in \eqref{eq: discreteG} and noting that the same estimate for $\left|G\left(r_{k^{'}}, s_k\right)-G(r, s)\right|$ holds. 
\end{proof}

\begin{lemma}\label{lem:effdim}
Let $\mathcal{L}:L_\rho^2\to L_\rho^2$ be self-adjoint, nonnegative and Hilbert--Schmidt and set $\mathcal{A}:=\mathcal{B}+\lambda I$ with $ \mathcal{B}:=\mathcal{L}^2$ and $\lambda>0$. 
Denote the effective dimension by $
\mathcal{N}(\lambda)
:= \operatorname{Tr}\bigl(\mathcal{B}(\mathcal{B}+\lambda I)^{-1}\bigr).
$
Then, for every $\phi\in L_\rho^2$,
$$
\|\mathcal{A}^{-1}\mathcal{L}\phi\|_{L_\rho^2}
\le \sqrt{\frac{\mathcal{N}(\lambda)}{\lambda}}\;\|\phi\|_{L_\rho^2}.
$$

In particular, the same estimate holds for 
$\mathcal{L}=\mathcal{L}_{\Gbar}$, for each piecewise-constant operator 
$\mathcal{L}_{\Gbar^n}$ in $r$, and for the fully discrete operator 
$\mathcal{L}_{\Gbar^M}$ in $(x,r)$, with effective dimensions
$\mathcal{N}(\lambda)$, $\mathcal{N}_n(\lambda)$ and $\mathcal{N}_M(\lambda)$
defined in the same way from $\mathcal{L}_{\Gbar}$, $\mathcal{L}_{\Gbar^n}$ 
and $\mathcal{L}_{\Gbar^M}$, respectively.
\end{lemma}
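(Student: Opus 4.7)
The plan is to reduce the claimed bound to a purely spectral calculation. Since $\mathcal{L}$ is self-adjoint, nonnegative, and Hilbert--Schmidt, hence compact, I invoke the spectral theorem to choose an orthonormal basis $\{e_k\}$ of $L_\rho^2$ and eigenvalues $\mu_k\ge 0$ with $\mathcal{L} e_k=\mu_k e_k$. Then $\mathcal{B}=\mathcal{L}^2$ has eigenvalues $\mu_k^2$ on the same basis, $\mathcal{A}=\mathcal{B}+\lambda I$ has eigenvalues $\mu_k^2+\lambda>0$, and the composed operator $\mathcal{A}^{-1}\mathcal{L}$ (well-defined because $\mathcal{L}$ and $\mathcal{A}$ commute) is diagonal with eigenvalues $\mu_k/(\mu_k^2+\lambda)$. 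In the same diagonalization, $\mathcal{N}(\lambda)=\sum_k \mu_k^2/(\mu_k^2+\lambda)$.

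Next, I establish the operator-norm bound $\|\mathcal{A}^{-1}\mathcal{L}\|_{\mathrm{op}}^2 \le \mathcal{N}(\lambda)/\lambda$, from which the stated inequality follows immediately via $\|\mathcal{A}^{-1}\mathcal{L}\phi\|_{L_\rho^2}\le \|\mathcal{A}^{-1}\mathcal{L}\|_{\mathrm{op}}\,\|\phi\|_{L_\rho^2}$. The key pointwise estimate is
$$
\frac{\mu_k^2}{(\mu_k^2+\lambda)^2}
=\frac{1}{\mu_k^2+\lambda}\cdot\frac{\mu_k^2}{\mu_k^2+\lambda}
\le \frac{1}{\lambda}\cdot\frac{\mu_k^2}{\mu_k^2+\lambda},
$$
valid for every $k$ because $\mu_k^2+\lambda\ge \lambda$. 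Taking the supremum in $k$ and using that a supremum of a nonnegative sequence is dominated by its sum,
$$
\|\mathcal{A}^{-1}\mathcal{L}\|_{\mathrm{op}}^2
=\sup_k \frac{\mu_k^2}{(\mu_k^2+\lambda)^2}
\le \frac{1}{\lambda}\sup_k \frac{\mu_k^2}{\mu_k^2+\lambda}
\le \frac{1}{\lambda}\sum_k \frac{\mu_k^2}{\mu_k^2+\lambda}
=\frac{\mathcal{N}(\lambda)}{\lambda},
$$
which yields the assertion.

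For the ``in particular'' clause, I only need to verify the hypotheses of the abstract statement for $\mathcal{L}_{\Gbar}$, $\mathcal{L}_{\Gbar^n}$, and $\mathcal{L}_{\Gbar^M}$. Self-adjointness and positive semi-definiteness are immediate from the symmetry of the kernels $\Gbar,\Gbar^n,\Gbar^M$ (they are symmetric by construction) and from the fact that $G$, $G^M$, and their piecewise-constant version are sums of outer products $Q[p_{t_i}](\cdot,r)Q[p_{t_i}](\cdot,s)$, which are positive semi-definite as functions of $(r,s)$. The Hilbert--Schmidt property and the uniform norm bound $\le C_1$ were already established in Lemma~\ref{Lem: HSoperator}. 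The abstract argument above therefore applies verbatim with $\mathcal{L}$, $\mathcal{B}$, $\mathcal{A}$, and $\mathcal{N}(\lambda)$ replaced by the corresponding discrete analogues, giving the same bound with $\mathcal{N}_n(\lambda)$ and $\mathcal{N}_M(\lambda)$.

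The proof is essentially routine, and the only step that deserves care is the ordering of the inequalities in the spectral estimate: one must first isolate a factor $1/(\mu_k^2+\lambda)$ and bound it by $1/\lambda$ \emph{before} passing from the supremum to the sum, otherwise one obtains only the weaker bound $\|\mathcal{A}^{-1}\mathcal{L}\|_{\mathrm{op}}\le 1/(2\sqrt{\lambda})$ that does not involve $\mathcal{N}(\lambda)$ and would be insufficient for the convergence analysis in the subsequent lemmas.
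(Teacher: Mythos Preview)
Your proof is correct and follows essentially the same spectral approach as the paper: diagonalize $\mathcal{L}$, use the pointwise inequality $\mu_k^2/(\mu_k^2+\lambda)^2\le (1/\lambda)\cdot \mu_k^2/(\mu_k^2+\lambda)$, and sum to recover $\mathcal{N}(\lambda)/\lambda$. The only cosmetic difference is that the paper routes the estimate through the Hilbert--Schmidt norm $\|\mathcal{A}^{-1}\mathcal{L}\|_{\mathrm{HS}}^2=\sum_k \mu_k^2/(\mu_k^2+\lambda)^2$ and then invokes $\|\cdot\|_{\mathrm{op}}\le\|\cdot\|_{\mathrm{HS}}$, whereas you compute the operator norm directly as a supremum and bound $\sup\le\sum$; both lead to the same inequality.
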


\begin{proof}[Proof of Lemma \ref{lem:effdim}]
If $\mathcal{L}\equiv 0$, then $\mathcal{B}=0$, $\mathcal{A}=\lambda I$ and $\mathcal{N}(\lambda)=0$, so both sides of the inequality are zero and the claim is trivial. Hence we assume $\mathcal{L}\not\equiv 0$.

Since $\mathcal{L}$ is compact, self-adjoint and nonnegative, the spectral theorem yields an orthonormal basis $\{e_j\}_{j\ge1}\subset L_\rho^2$ and eigenvalues $\sigma_j\ge0$ such that
$
\mathcal{L} e_j = \sigma_j e_j,\qquad j=1,2,\dots$. 
Then
$ 
\mathcal{B}=\mathcal{L}^2 \quad\Rightarrow\quad \mathcal{B}e_j=\sigma_j^2 e_j =:\mu_j e_j,
$
so $\{e_j\}$ is also an eigenbasis of $\mathcal{B}$ with eigenvalues $\mu_j=\sigma_j^2$, and
$$
\mathcal{A}=\mathcal{B}+\lambda I \quad\Rightarrow\quad \mathcal{A}e_j=(\mu_j+\lambda)e_j,\qquad \lambda>0.
$$
The Hilbert--Schmidt norm of $\mathcal{A}^{-1}\mathcal{L}$ is
$$
\|\mathcal{A}^{-1}\mathcal{L}\|_{\mathrm{HS}}^2
= \sum_{j\ge1}\|\mathcal{A}^{-1}\mathcal{L}e_j\|_{L_\rho^2}^2
= \sum_{j\ge1}\frac{\mu_j}{(\mu_j+\lambda)^2}.
$$
For each $j$ we have 
$$
\frac{\mu_j}{(\mu_j+\lambda)^2}
= \frac{1}{\mu_j+\lambda}\cdot\frac{\mu_j}{\mu_j+\lambda}
\le \frac{1}{\lambda}\cdot\frac{\mu_j}{\mu_j+\lambda},
$$
since $\mu_j+\lambda\ge\lambda$. Summing over $j$ gives
$$
\|\mathcal{A}^{-1}\mathcal{L}\|_{\mathrm{HS}}^2
\le \frac{1}{\lambda}\sum_{j\ge1}\frac{\mu_j}{\mu_j+\lambda}
= \frac{\mathcal{N}(\lambda)}{\lambda}.
$$
Thus, 
$
\|\mathcal{A}^{-1}\mathcal{L}\|
\le \|\mathcal{A}^{-1}\mathcal{L}\|_{\mathrm{HS}}
\le \sqrt{\frac{\mathcal{N}(\lambda)}{\lambda}}$ and 
$
\|\mathcal{A}^{-1}\mathcal{L}\phi\|_{L_\rho^2}
\le \sqrt{\frac{\mathcal{N}(\lambda)}{\lambda}}\;\|\phi\|_{L_\rho^2}$ for any $\phi\in L_\rho^2$.  

The cases $\mathcal{L}=\mathcal{L}_{\Gbar}$, $\mathcal{L}=\mathcal{L}_{\Gbar^n}$ and $\mathcal{L}=\mathcal{L}_{\Gbar^M}$ follow by applying the same argument to each operator, which only changes the eigenvalues and hence replaces $\mathcal{N}(\lambda)$ by $\mathcal{N}(\lambda)$, $\mathcal{N}_n(\lambda)$ and $\mathcal{N}_M(\lambda)$ respectively.
\end{proof}

\begin{lemma}\label{lem:diff-effdim}
Let $\mathcal{B} := \mathcal{L}_{\Gbar}^2$ and $\mathcal{B}_n := \mathcal{L}_{\Gbar^n}^2$, and assume 
$\mathcal{L}_{\Gbar}\not\equiv 0$. Define the effective dimensions: $ \mathcal{N}(\lambda) := \operatorname{Tr}\bigl(\mathcal{B}(\mathcal{B}+\lambda I)^{-1}\bigr),\, \mathcal{N}_n(\lambda) := \operatorname{Tr}\bigl(\mathcal{B}_n(\mathcal{B}_n+\lambda I)^{-1}\bigr).$ Then, for any fixed $\lambda>0$,
$$
\bigl|\mathcal{N}_n(\lambda)-\mathcal{N}(\lambda)\bigr|
\le \frac{2 C_1 C_2}{\lambda}\,\Delta r.
$$
Moreover, if $\mathcal{B}_M := \mathcal{L}_{\Gbar^M}^2$ and $\mathcal{N}_M(\lambda) := \operatorname{Tr}\bigl(\mathcal{B}_M(\mathcal{B}_M+\lambda I)^{-1}\bigr)$, then the same bound applies to $\bigl|\mathcal{N}_M(\lambda)-\mathcal{N}_n(\lambda)\bigr|$.
\end{lemma}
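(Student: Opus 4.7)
The plan is to rewrite the difference of the two effective dimensions as the trace of a product in which the perturbation $\mathcal{B}_n-\mathcal{B}$ appears only once, and then to estimate this trace via a trace-norm times operator-norm inequality. The starting identity is $\mathcal{B}(\mathcal{B}+\lambda I)^{-1}=I-\lambda(\mathcal{B}+\lambda I)^{-1}$ (and the analogous one for $\mathcal{B}_n$), which yields, at the operator level,
$$
\mathcal{B}_n(\mathcal{B}_n+\lambda I)^{-1} - \mathcal{B}(\mathcal{B}+\lambda I)^{-1}
= \lambda\bigl[(\mathcal{B}+\lambda I)^{-1} - (\mathcal{B}_n+\lambda I)^{-1}\bigr].
$$
Applying the second resolvent identity to the right-hand side gives
$$
\mathcal{B}_n(\mathcal{B}_n+\lambda I)^{-1} - \mathcal{B}(\mathcal{B}+\lambda I)^{-1}
= \lambda(\mathcal{B}_n+\lambda I)^{-1}(\mathcal{B}_n-\mathcal{B})(\mathcal{B}+\lambda I)^{-1}.
$$
Since Lemma~\ref{Lem: HSoperator} shows that $\mathcal{L}_{\Gbar}$ and $\mathcal{L}_{\Gbar^n}$ are Hilbert--Schmidt, both $\mathcal{B}$ and $\mathcal{B}_n$ are trace class, hence both sides are trace class, and I may take the trace of both sides to obtain the clean expression
$$
\mathcal{N}_n(\lambda)-\mathcal{N}(\lambda)
= \lambda\,\operatorname{Tr}\bigl[(\mathcal{B}_n+\lambda I)^{-1}(\mathcal{B}_n-\mathcal{B})(\mathcal{B}+\lambda I)^{-1}\bigr].
$$

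Next, I would decompose the perturbation using the algebraic identity
$$
\mathcal{B}_n - \mathcal{B}
= \mathcal{L}_{\Gbar^n}^2-\mathcal{L}_{\Gbar}^2
= \mathcal{L}_{\Gbar^n}(\mathcal{L}_{\Gbar^n}-\mathcal{L}_{\Gbar})
+ (\mathcal{L}_{\Gbar^n}-\mathcal{L}_{\Gbar})\mathcal{L}_{\Gbar}.
$$
Each summand is a product of two Hilbert--Schmidt operators, so by the standard inequality $\|AB\|_{\mathrm{tr}}\le\|A\|_{\mathrm{HS}}\|B\|_{\mathrm{HS}}$ and the bounds in Lemma~\ref{Lem: HSoperator} I get
$$
\|\mathcal{B}_n-\mathcal{B}\|_{\mathrm{tr}}
\le \|\mathcal{L}_{\Gbar^n}\|_{\mathrm{HS}}\|\mathcal{L}_{\Gbar^n}-\mathcal{L}_{\Gbar}\|_{\mathrm{HS}}
+ \|\mathcal{L}_{\Gbar^n}-\mathcal{L}_{\Gbar}\|_{\mathrm{HS}}\|\mathcal{L}_{\Gbar}\|_{\mathrm{HS}}
\le 2C_1 C_2\,\Delta r.
$$

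Now I combine these pieces. Using cyclicity and the inequality $|\operatorname{Tr}(XY)|\le\|X\|\,\|Y\|_{\mathrm{tr}}$ with $X=(\mathcal{B}+\lambda I)^{-1}(\mathcal{B}_n+\lambda I)^{-1}$ and $Y=\mathcal{B}_n-\mathcal{B}$, I obtain
$$
|\mathcal{N}_n(\lambda)-\mathcal{N}(\lambda)|
\le \lambda\,\|(\mathcal{B}+\lambda I)^{-1}\|\,\|(\mathcal{B}_n+\lambda I)^{-1}\|\,\|\mathcal{B}_n-\mathcal{B}\|_{\mathrm{tr}}
\le \lambda\cdot\frac{1}{\lambda}\cdot\frac{1}{\lambda}\cdot 2C_1C_2\,\Delta r
= \frac{2C_1C_2}{\lambda}\,\Delta r,
$$
which is the desired bound. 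The bound for $|\mathcal{N}_M(\lambda)-\mathcal{N}_n(\lambda)|$ follows from exactly the same argument with $\mathcal{L}_{\Gbar^n}\leftarrow\mathcal{L}_{\Gbar^M}$ and $\mathcal{L}_{\Gbar}\leftarrow\mathcal{L}_{\Gbar^n}$, since Lemma~\ref{Lem: HSoperator} provides the matching Hilbert--Schmidt estimates $\|\mathcal{L}_{\Gbar^M}\|\le C_1$ and $\|\mathcal{L}_{\Gbar^M}-\mathcal{L}_{\Gbar^n}\|\le C_2\Delta r$.

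The only subtle step is the trace-class justification for the factorization of $\mathcal{B}_n-\mathcal{B}$: one must verify that the two perturbation operators involved are indeed Hilbert--Schmidt so that their product is trace class, and keep careful track of the factor of $\lambda$ in front of the resolvents so it cancels one of the two $1/\lambda$ factors and yields the final rate $O(\Delta r/\lambda)$ rather than $O(\Delta r/\lambda^{2})$.
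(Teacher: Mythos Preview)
Your proof is correct and reaches the same constant and rate as the paper, but by a genuinely different route. The paper works at the eigenvalue level: it writes $\mathcal{N}(\lambda)=\sum_j f_\lambda(\mu_j)$ with $f_\lambda(t)=t/(t+\lambda)$, uses that $f_\lambda$ is $1/\lambda$-Lipschitz to get $|\mathcal{N}_n(\lambda)-\mathcal{N}(\lambda)|\le \lambda^{-1}\sum_j|\mu_{j,n}-\mu_j|$, and then invokes Mirsky's inequality to bound this eigenvalue sum by $\|\mathcal{B}_n-\mathcal{B}\|_1$. Your argument stays entirely at the operator level: you rewrite the difference of effective dimensions via $I-\lambda(\mathcal{B}+\lambda I)^{-1}$ and the second resolvent identity, then apply the trace inequality $|\operatorname{Tr}(XY)|\le\|X\|\,\|Y\|_{\mathrm{tr}}$ directly. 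Both approaches end with the same trace-norm estimate $\|\mathcal{B}_n-\mathcal{B}\|_{\mathrm{tr}}\le 2C_1C_2\,\Delta r$ obtained from Lemma~\ref{Lem: HSoperator} and the Schatten H\"older inequality. Your route is slightly more self-contained in that it avoids the Mirsky eigenvalue-perturbation result; the paper's route, on the other hand, makes explicit that the effective dimension is a Lipschitz spectral functional, which is conceptually useful elsewhere. Either argument is fully rigorous here.
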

\begin{proof}[Proof of Lemma \ref{lem:diff-effdim}]
Fix $\lambda>0$ and define $f_\lambda(t) := t/(t+\lambda)$ for $t\ge0$. Since $\mathcal{B}$ and $\mathcal{B}_n$ are self-adjoint, trace-class operators, let $\{\mu_j\}_{j\ge1}$ and $\{\mu_{j,n}\}_{j\ge1}$ denote their nonnegative eigenvalues, listed in non-increasing order. Then
\begin{equation}
\label{eq:diffNn}
\begin{aligned}
|\mathcal{N}_n(\lambda)-\mathcal{N}(\lambda)|
&= \left|\sum_{j\ge1} f_\lambda(\mu_{j,n}) - \sum_{j\ge1} f_\lambda(\mu_j)\right| \le \sum_{j\ge1}\bigl|f_\lambda(\mu_{j,n}) - f_\lambda(\mu_j)\bigr|
 \le \frac{1}{\lambda}\sum_{j\ge1}|\mu_{j,n}-\mu_j|,
\end{aligned}
\end{equation}
since
$
\bigl|f_\lambda(a)-f_\lambda(b)\bigr|
= \left|\frac{a}{a+\lambda} - \frac{b}{b+\lambda}\right|
= \lambda\,\frac{|a-b|}{(a+\lambda)(b+\lambda)}
\le \frac{1}{\lambda}|a-b|,
$
for all $a,b\ge0$. Next, by the Schatten Hölder inequality $\|AB\|_1 \le \|A\|_2\|B\|_2$ for Hilbert--Schmidt operators $A,B$, we obtain
$$
\begin{aligned}
\|\mathcal{B}_n-\mathcal{B}\|_1
&\le \|\mathcal{L}_{\Gbar^n}(\mathcal{L}_{\Gbar^n}-\mathcal{L}_{\Gbar})\|_1
 + \|(\mathcal{L}_{\Gbar^n}-\mathcal{L}_{\Gbar})\mathcal{L}_{\Gbar}\|_1 \\
&\le \|\mathcal{L}_{\Gbar^n}\|_2\|\mathcal{L}_{\Gbar^n}-\mathcal{L}_{\Gbar}\|_2
 + \|\mathcal{L}_{\Gbar^n}-\mathcal{L}_{\Gbar}\|_2\|\mathcal{L}_{\Gbar}\|_2 \le 2 C_1 C_2\,\Delta r
\end{aligned}
$$
by using $\|\mathcal{L}_{\Gbar^n}\|_2,\|\mathcal{L}_{\Gbar}\|_2\le C_1$ and $\|\mathcal{L}_{\Gbar^n}-\mathcal{L}_{\Gbar}\|_2 \le C_2\Delta r$ in Lemma \ref{Lem: HSoperator}. By the Mirsky inequality \cite{foucart2018concave},
\begin{equation}
\label{eq:diffmu}
\sum_{j\ge1}|\mu_{j,n}-\mu_j|
\le \|\mathcal B_n-\mathcal B\|_1
\le 2 C_1 C_2\,\Delta r.
\end{equation}
Combining \eqref{eq:diffNn} and \eqref{eq:diffmu} yields
$$
|\mathcal N_n(\lambda)-\mathcal N(\lambda)|
\le \frac{2 C_1 C_2}{\lambda}\,\Delta r.
$$
Similarly, applying the same argument with $(\mathcal B,\mathcal B_n)$ replaced by $(\mathcal B_n,\mathcal B_M)$ gives
$$
|\mathcal{N}_M(\lambda)-\mathcal{N}_n(\lambda)|
\le \frac{2 C_1 C_2}{\lambda}\,\Delta r.
$$
\end{proof}

The next lemma shows that when the eigenvalues decay polynomially with a rate $\lambda_k\sim k^{-\varsigma}$ for $\varsigma>1/2$, the effective dimension $\mathcal{N}_{2\varsigma}(\lambda)$ increases at the order $O(\lambda^{-1/2\varsigma})$ as $\lambda\downarrow 0$.

%%%%%%%%%%%%%%%%%%%%%%%
 \begin{lemma}\label{lemma:effdim_Bd}
Assume that there exist constants $a\geq 0$, $b>0$ and $\varsigma>1/4$ such that
$ a\,k^{-2\varsigma} \le \lambda_k \le  b\,k^{-2\varsigma}, \quad k\in\mathbb N$. 
Then, for all $0<\lambda<1$, we have  
\[
C_1\,\lambda^{-1/(4\varsigma)}-\frac1 2
 \le 
\mathcal{N}_{4\varsigma}(\lambda) := \sum_{k\ge1}\frac{\lambda^2_k}{\lambda^2_k+\lambda}
 \le 
C_2\,\lambda^{-1/(4\varsigma)}+1.
\]
where $C_1 = \frac{a^{1/(2\varsigma)}}{2}$
and $C_2 = b^{1/(2\varsigma)}\Big(1+\frac{1}{4\varsigma-1}\Big)$. 
In particular, we have 
$\mathcal{N}_{4\varsigma}(\lambda) \asymp \lambda^{-1/(4\varsigma)} \text{ as }\lambda\downarrow 0.$ 
\end{lemma}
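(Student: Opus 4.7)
The plan is a standard ``split the sum at the spectral cutoff'' argument, exploiting the monotonicity of $t\mapsto t/(t+\lambda)$. Since this map is increasing on $[0,\infty)$, the spectral-decay assumption $a k^{-2\varsigma}\le \lambda_k\le b k^{-2\varsigma}$ translates to
\[
\frac{a^{2} k^{-4\varsigma}}{a^{2} k^{-4\varsigma}+\lambda}
\;\le\;
\frac{\lambda_k^{2}}{\lambda_k^{2}+\lambda}
\;\le\;
\frac{b^{2} k^{-4\varsigma}}{b^{2} k^{-4\varsigma}+\lambda}.
\]
Thus it suffices to prove matching bounds for the two bracketing sums, and the key scale is the integer cutoff $k_\star$ at which $k^{-4\varsigma}$ equals $\lambda$ up to multiplicative constants.

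For the upper bound, I set $k_b:=\lceil (b^{2}/\lambda)^{1/(4\varsigma)}\rceil$ and split the sum at $k_b$. For $k\le k_b$ each summand is at most $1$, contributing at most $k_b\le b^{1/(2\varsigma)}\lambda^{-1/(4\varsigma)}+1$. For $k>k_b$ the denominator dominates, so $\tfrac{b^{2} k^{-4\varsigma}}{b^{2} k^{-4\varsigma}+\lambda}\le b^{2}k^{-4\varsigma}/\lambda$, and I compare the tail to an integral:
\[
\frac{b^{2}}{\lambda}\sum_{k>k_b} k^{-4\varsigma}
\;\le\;\frac{b^{2}}{\lambda}\int_{k_b}^{\infty}\! x^{-4\varsigma}\,dx
\;=\;\frac{b^{2}}{\lambda}\,\frac{k_b^{\,1-4\varsigma}}{4\varsigma-1},
\]
which requires $\varsigma>1/4$ for convergence, exactly the hypothesis. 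Substituting $k_b\asymp(b^{2}/\lambda)^{1/(4\varsigma)}$ the tail simplifies to $\tfrac{b^{1/(2\varsigma)}}{4\varsigma-1}\,\lambda^{-1/(4\varsigma)}$. Adding the two pieces gives $\mathcal{N}_{4\varsigma}(\lambda)\le C_2\,\lambda^{-1/(4\varsigma)}+1$ with $C_2=b^{1/(2\varsigma)}\bigl(1+\tfrac{1}{4\varsigma-1}\bigr)$, as claimed.

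For the lower bound, let $k_a:=\lfloor (a^{2}/\lambda)^{1/(4\varsigma)}\rfloor$. For every $k\le k_a$ we have $a^{2}k^{-4\varsigma}\ge\lambda$, hence $\tfrac{a^{2}k^{-4\varsigma}}{a^{2}k^{-4\varsigma}+\lambda}\ge \tfrac12$. Retaining only these terms yields
\[
\mathcal{N}_{4\varsigma}(\lambda)\;\ge\;\tfrac12\, k_a
\;\ge\;\tfrac12\bigl(a^{1/(2\varsigma)}\lambda^{-1/(4\varsigma)}-1\bigr)
\;=\;C_1\,\lambda^{-1/(4\varsigma)}-\tfrac12,
\]
with $C_1=a^{1/(2\varsigma)}/2$. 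Combining both bounds yields $\mathcal{N}_{4\varsigma}(\lambda)\asymp\lambda^{-1/(4\varsigma)}$ as $\lambda\downarrow 0$.

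The only mildly delicate points are (i) making sure the integer flooring/ceiling only costs additive constants (handled by absorbing $\pm 1$), and (ii) insisting on $\varsigma>1/4$ so that the tail $\sum_{k>k_b}k^{-4\varsigma}$ is summable and comparable to its integral. Everything else is routine; no serious obstacle is anticipated. The case $a=0$ simply makes the lower bound trivial ($C_1=0$), which is consistent with the statement.
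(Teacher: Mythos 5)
Your proof is correct and takes essentially the same approach as the paper: split the sum at the spectral cutoff $k\asymp\lambda^{-1/(4\varsigma)}$, bound the head terms by constants, and control the tail with an integral comparison (requiring $\varsigma>1/4$). The paper phrases it through the counting function $m(\lambda)=\#\{k:\lambda_k^2\ge\lambda\}$ and the pointwise inequality $\tfrac12\mathbf 1_{\{x\ge1\}}\le x/(x+1)\le\mathbf 1_{\{x\ge1\}}+x\mathbf 1_{\{x<1\}}$, whereas you bracket the summands by monotonicity first, but the cutoffs, constants, and estimates are identical.
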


\begin{proof}
Let $m(\lambda)$ be the number of $\lambda^2_k>\lambda$, i.e.,
$$m(\lambda) := \#\{k\ge1:\ \lambda^2_k\ge \lambda\}=\#\{k\ge1:\ \lambda_k\ge \sqrt{\lambda}\}.$$  
The two-sided decay condition implies that 
\[
(a/\sqrt{\lambda})^{1/(2\varsigma)}-1
\;\le\;
m(\lambda)
\;\le\;
(b/\sqrt{\lambda})^{1/(2\varsigma)}+1
=:\,K_b(\lambda).
\tag{1}
\]

\noindent Since $\frac12\,\mathbf 1_{\{x\ge1\}}  \le  \frac{x}{x+1}  \le 
\mathbf 1_{\{x\ge1\}} + x\,\mathbf 1_{\{x<1\}}$ for all $x\ge0$, with $x=\lambda_k^2/\lambda$, we have 
\begin{align*}
\frac12\,m(\lambda)  \le \mathcal{N}_{4\varsigma}(\lambda) = \sum_{k\ge1}\frac{\lambda_k^2/\lambda}{\lambda_k^2/\lambda+1}
& \le 
m(\lambda)+\frac{1}{\lambda}\sum_{k:\,\lambda_k^2<\lambda}\lambda_k^2.
\end{align*}
The lower bound with $C_1  = \frac{a^{1/(2\varsigma)}}{2}$ then follows directly from 
\[
\mathcal{N}_{4\varsigma}(\lambda) \ge \frac12\,m(\lambda)
 \ge \frac12\Big(\,(a/\sqrt{\lambda})^{1/(2\varsigma)}-1\Big)
 = \frac{a^{1/(2\varsigma)}}{2}\,\lambda^{-1/(4\varsigma)}-\frac12.
\]
To control the upper bound, note that $\{k:\lambda_k^2<\lambda\}\subset\{k>K_b(\lambda)\}$. Hence, using $\lambda_k\le b k^{-2\varsigma}$,
\[
\sum_{k:\,\lambda_k^2<\lambda}\lambda_k^2
\le
\sum_{k>K_b(\lambda)}\lambda_k^2
\le
b^2\sum_{k>K_b(\lambda)}k^{-4\varsigma}.
\]
By the integral test (valid since $4\varsigma>1$),
$\sum_{k>K}k^{-4\varsigma}\le \int_K^\infty x^{-4\varsigma}\,dx
=\frac{K^{\,1-4\varsigma}}{4\varsigma-1}$. Therefore,
\[
\mathcal{N}_{4\varsigma}(\lambda)
 \le 
K_b(\lambda)+\frac{b^2}{(4\varsigma-1)\lambda}\,K_b(\lambda)^{\,1-4\varsigma}
 \le 
b^{1/(2\varsigma)}\Bigl(1+\frac{1}{4\varsigma-1}\Bigr)\lambda^{-1/(4\varsigma)}+1, 
\]
which gives the upper bound with $C_2= b^{1/(2\varsigma)}\Big(1+\frac{1}{4\varsigma-1}\Big) $. 
\end{proof}
%%%%%%%%%%%%%%%%%%%%%%%%+==============
%%%%%%%%%%%%%%%%%%%%%%%%+==============

\makeatletter
\let\origthebibliography\thebibliography
\renewcommand{\thebibliography}[1]{%
    \origthebibliography{#1}%
    \setlength{\itemsep}{0.2pt}% space between items
}
\makeatother
{\small 
\bibliographystyle{unsrt}
\bibliography{referencesRKHS}
} 
\end{document}